\title{Perturbations of CUR Decompositions}
\newcommand{\R}{\mathbb{R}}
\newcommand{\K}{\mathbb{K}}
\newcommand{\C}{\mathbb{C}}
\newcommand{\rank}{{\rm rank\,}}
\newcommand{\eps}{\varepsilon}
\author{Keaton Hamm\thanks{Department of Mathematics, University of Texas at Arlington, Arlington, TX
(\email{keaton.hamm@uta.edu}).}
\and Longxiu Huang\thanks{Department of Mathematics,
University of California Los Angeles, Los Angeles, CA (\email{huangl3@math.ucla.edu}).}}
\begin{document}


\maketitle


\begin{abstract}
The CUR decomposition is a factorization of a low-rank matrix obtained by selecting certain column and row submatrices of it.  We perform a thorough investigation of what happens to such decompositions in the presence of noise.  Since CUR decompositions are non-uniquely formed, we investigate several variants and give perturbation estimates for each in terms of the magnitude of the noise matrix in a broad class of norms which includes all Schatten $p$--norms.  The estimates given here are qualitative and illustrate how the choice of columns and rows affects the quality of the approximation, and additionally we obtain new state-of-the-art bounds for some variants of CUR approximations.
\end{abstract}

\begin{keywords}
CUR Decomposition, Low-Rank Matrix Approximation, Matrix Perturbation, Nystr\"{o}m Method
\end{keywords}
\begin{AMS}
15A23, 65F30, 68P99, 68W20
\end{AMS}


\section{Introduction}

Low-rank matrix approximation has become a mainstay of applied mathematics in recent years, finding applications in signal processing \cite{CandesRomberg}, data compression \cite{fazel2008compressed}, matrix completion \cite{candes2009exact}, and analysis of large-scale data \cite{tropp2019streaming}, to name but a few.  Indeed, it has been observed for some time that much of the data we collect is approximately low rank (see \cite{UdellTownsend2019LowRank} for a prolonged discussion) and thus this structure has been much exploited.  One method for doing so is the CUR decomposition, which while known since at least the 1950s, has recently received much more attention following the works of Goreinov et al.~\cite{Goreinov2,Goreinov,Goreinov3}, and Drineas et al.~\cite{DKMIII,DMM08,DMPNAS}, among others \cite{DemanetWu,SorensenDEIMCUR,VoroninMartinsson} (see \cite{AHKS,HH2019} for a more detailed history of its use).

Classical low-rank matrix approximation methods arose from the Singular Value Decomposition (SVD), while more recent methods typically solve penalized optimization problems \cite{liu2012robust} or use randomized methods in some fashion \cite{DKMIII,DMM08,tropp} due to the lack of robustness of the SVD to noise in many applications \cite{AHKS}, but also due to lack of interpretability of results \cite{DMPNAS}.  

{\color{black}}

\subsection{Contributions}

The main contribution of this work is to provide a thorough perturbation analysis of many different CUR approximations.  The classical CUR decomposition of a low-rank matrix is to put $A=CU^\dagger R$, where $C$ and $R$ are column and row submatrices of $A$, respectively, i.e., $C=A(:,J)$ and $R=A(I,:)$ for some index sets $I,J$, and $U$ is their overlap ($U=A(I,J)$).  {\color{black}Another option, as discussed later, is} $A= CC^\dagger AR^\dagger R$, where $CC^\dagger$ and $R^\dagger R$ are orthogonal projections onto the span of the columns of $C$ and rows of $R$, respectively. We analyze what happens when we observe $\widetilde{A}=A+E$ where $A$ is exactly low rank, and $E$ is an arbitrary noise matrix.  Our estimates in Section \ref{SEC:Main} are qualitative and reminiscent of Stewart's classical perturbation analyses for the SVD and Moore--Penrose pseudoinverses \cite{Stewart_1977,stewart1998perturbation}.  

The main advantages of our approach are as follows:
\begin{itemize}
    \item Our approximation bounds are typically for arbitrary noise matrices $E$, though we specialize in some cases to give tighter and more illustrative bounds,
    \item Our bounds hold for arbitrary Schatten $p$--norms (and even a broader class of norms -- see Section \ref{SEC:Notation} for details),
    \item We show that there is no canonical way to enforce the rank when dealing with CUR decompositions, which is in contrast to what has recently become known about rank-enforcement in the Nystr\"{o}m method, which is a special case of CUR (Section \ref{SEC:Rank}),
    \item In the case of choosing maximal volume submatrices of $\widetilde{A}$, we provide better bounds than those known from the Linear Algebra literature \cite{Osinsky2018} as well as giving an alternate, more intuitive method of proof for the perturbation bound.
    \item {\color{black}We also present a new rank-truncation method for CUR decompositions which demonstrates competitive experimental performance to the state-of-the-art.}
\end{itemize}

{\color{black}\subsection{Modelling Noise} The low-rank + noise model for data is commonly used when considering data obtained from an application domain.  For example, the landmark paper by Cand\`{e}s et al.~on Robust PCA \cite{candes2011robust} considers the case when $\widetilde{A}=A+S$ for low-rank $A$ and sparse $S$ (later works often consider $A+S+E$ where $S$ is sparse, and $E$ has small Frobenius norm).  Just as traditional PCA finds great utility in many data analysis tasks, Robust PCA has achieved success in many areas including image and video processing, medical imaging, and various computer vision tasks; see, e.g., the survey of applications \cite{bouwmans2018applications}.  

Additionally, any matrix which is approximately low-rank can be viewed as $\widetilde{A} = \widetilde{A}_r+(\widetilde{A}-\widetilde{A}_r)$, where the noise matrix satisfies $\|\widetilde{A}-\widetilde{A}_r\|_2 = \sigma_{r+1}(\widetilde{A}).$ Here $\widetilde{A}_r$ is its truncated SVD of order $r$. That is to say, if the spectrum of any matrix $\widetilde{A}$ is not too flat, then it can reasonably be viewed as an instance of the low-rank + noise model and our perturbation analysis may be applied.  Decaying spectrum is often the case in many applications \cite{tropp2019streaming,UdellTownsend2019LowRank}.

Due to this last observation, our results could, in principle, be compared to the substantial literature on the use of CUR decompositions to efficiently and accurately approximate the SVD of a full-rank matrix $\widetilde{A}$, see, e.g., \cite{DKMIII,DMM08,DMPNAS,Wang2019} for a subsampling. However, the focus here is to provide generic bounds for all kinds of noise $E$ without reference to a particular method of selecting columns and rows.  Our aim is to provide flexible and general estimates which can be broadly applied.
}

\subsection{Notations}\label{SEC:Notation}

We will use $\K$ to be either $\R$ or $\C$, and $[n]$ to denote $\{1,\dots,n\}$.  As column-row factorizations choose submatrices of a given matrix, if $A\in\K^{m\times n}$ and $I\subset[m]$, $J\subset[n]$, we let $A(I,J)$ denote the $|I|\times|J|$ submatrix of $A$ with entries $\{a_{i,j}\}_{(i,j)\in I\times J}$, and use $A(I,:)$ to be the case $J=[n]$ and $A(:,J)$ the case $I=[m]$.

We denote by $A=W\Sigma V^*$ (or $W_A\Sigma_AV_A^*$ if the matrix needs to be specified) the Singular Value Decomposition (SVD) of $A$, with the use of $W$ rather than the typical $U$ on account of the latter being used for the middle matrix in the CUR decomposition.  The truncated SVD of order $r$ of a matrix $A$ will be denoted by $A_r=W_r\Sigma_rV_r^*$, where the columns of $W_r$ are the first $r$ left singular vectors, $\Sigma_r$ is a $r\times r$ matrix containing the largest $r$ singular values, and the columns of $V_r$ are the first $r$ right singular vectors.  Singular values are assumed to be positioned in descending order, and we label them $\sigma_1\geq\sigma_2\geq\dots\geq\sigma_k\geq0$, where $k=\rank(A)$. 
If $r=k$, then $A=A_k$ and the truncated SVD $A=W_k\Sigma_k V_k^*$ is also called the compact SVD of $A$. To specify the underlying matrix, we may write $\sigma_i(A)$ for the $i$--th singular value of $A$.  We will also make use of thresholding singular values of a matrix, and will denote by $[A]_\tau$ the matrix $W[\Sigma]_\tau V^*$, where $[\Sigma]_\tau(i,i) = \sigma_i(A)$ if $\sigma_i(A)\geq\tau$, and is $0$ otherwise; thus the case $\tau=0$ corresponds to the full SVD of $A$.

The Moore--Penrose pseudoinverse of $A\in\K^{m\times n}$ is denoted by $A^\dagger\in\K^{n\times m}$.  Recall that this pseudoinverse is unique and satisfies the following properties: (i) $AA^\dagger A = A$, (ii) $A^\dagger AA^\dagger = A^\dagger$, and (iii) $AA^\dagger$ and $A^\dagger A$ are Hermitian.  Additionally, the Moore--Penrose pseudoinverse admits an easy expression given the SVD: $A^\dagger=V_A\Sigma_A^\dagger W_A^*$, where $\Sigma^\dagger$ is the $n\times m$ matrix with diagonal entries $\frac{1}{\sigma_i(A)}$, $i=1,\dots,\rank(A)$. 

In our analysis we consider a general family of matrix norms as in Stewart \cite{Stewart_1977}.  The spectral norm is denoted by $\|A\|_2$, and is the operator norm of $A$ mapping $\R^n$ to $\R^m$ in the Euclidean norm.  We consider families of submultiplicative, unitarily invariant norms $\|\cdot\|:\bigcup_{m,n=1}^\infty\K^{m\times n}\to\R$ which are \textit{normalized} ($\|x\|=\|x\|_2$ for any vector $x$ considered as a matrix) and \textit{uniformly generated} ($\|A\|$ can be written as $\phi(\sigma_1(A),\dots,\sigma_k(A))$ for some symmetric function $\phi$).  The canonical examples of such families of norms are the Schatten $p$--norms $(1\leq p\leq\infty)$ given by
$\|A\|_{S_p}:=\|(\sigma_1(A),\dots,\sigma_k(A))\|_{\ell_p}$.  Unfortunately, while $\|\cdot\|_2$ is a thoroughly reasonable notation for the spectral norm, it is actually the Schatten $\infty$--norm.  The Frobenius norm is the Schatten $2$--norm but is denoted $\|\cdot\|_F$, and the Nuclear norm is the Schatten $1$--norm, but is typically denoted $\|\cdot\|_*$;  unless we need to specify a specific choice or norm, we will simply use the symbol $\|\cdot\|$ to denote an arbitrary submultiplicative, unitarily invariant, normalized, uniformly generated norm. Note that $\|\cdot\|_2\leq\|\cdot\|$ for any such norm, and also that $\|AB\|\leq \|A\|_2\|B\|$.  

Finally, we will use $\mathcal{N}(A)$ and $\mathcal{R}(A)$ to denote the nullspace and range of $A$, respectively.

\subsection{Layout}

The rest of the paper consists of a discussion of CUR decompositions in Section \ref{SEC:CUR}, {\color{black}the setup for our perturbation analysis in Section \ref{SEC:Perturbation}, the main results and comparison with other facets of the literature in Section \ref{SEC:Main}, and refined estimates for maximal volume column and row submatrices in Section \ref{SEC:MaxVol}. Intermediate lemmas and proofs appear in Sections \ref{SEC:CURLemmas}--\ref{SEC:Proofs} and a discussion of rank-enforcement in CUR approximations is in Section \ref{SEC:Rank}. We end with some brief numerical experiments in Section \ref{SEC:Experiments} and comments in Section \ref{SEC:Conclusion}.  A supplementary proof and a table summarizing our error bounds are provided in Appendices \ref{APP:ProofPROP:NormTerms} and \ref{APP:Table}.}

\section{CUR Decompositions and Approximations}\label{SEC:CUR}

CUR approximations are low-rank approximations formed by selecting certain column and row submatrices of a given matrix, and then putting them together in some fashion. If $C$ and $R$ are such submatrices of $A$, then a \textit{CUR approximation} of $A$ is a product of the form {\color{black}$A\approx CU^\dagger R$}, where $C=A(:,J)$ for some $J\subset[n]$, $R=A(I,:)$ for some $I\subset[m]$, and $U=A(I,J)$. {\color{black} The middle matrix is chosen in various ways, but we will exclusively use $U$ to denote $A(I,J)$  here, and write other variants explicitly.}

For general $A$, there is a closed form for the best choice of $U$ for Frobenius norm error in the following sense.
\begin{proposition}[\cite{stewart_minimizer}]\label{PROP:CAR}
Let $A\in\K^{m\times n}$ and $C$ and $R$ be column and row submatrices of $A$, respectively.  Then the following holds:
\[ \underset{X}{\text{argmin}}\;\|A-CXR\|_F = C^\dagger AR^\dagger.\]
\end{proposition}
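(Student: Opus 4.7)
The plan is to recognize this as an orthogonal projection problem: matrices of the form $CXR$ are exactly those whose column space lies in $\mathcal{R}(C)$ and whose row space lies in $\mathcal{R}(R^*)$, so minimizing the Frobenius distance from $A$ to this bilinear subspace amounts to projecting $A$ onto it. Since $CC^\dagger$ is the orthogonal projector onto $\mathcal{R}(C)$ and $R^\dagger R$ is the orthogonal projector onto $\mathcal{R}(R^*)$ (both being Hermitian idempotents by the Moore--Penrose properties listed in Section \ref{SEC:Notation}), the natural candidate for this projection is $B:=CC^\dagger A R^\dagger R$.

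First I would verify the Pythagorean decomposition
\[
\|A-CXR\|_F^2 \;=\; \|A-B\|_F^2 + \|B-CXR\|_F^2
\]
for every admissible $X$. This reduces to showing that the Frobenius inner product of the residual $A-B$ with any matrix $M$ of the form $CYR$ (in particular with $B-CXR$) vanishes. Using the cyclic property of the trace together with the self-adjointness of $CC^\dagger$ and $R^\dagger R$, one computes
\[
\langle B, M\rangle_F = \operatorname{tr}(M^*CC^\dagger A R^\dagger R) = \operatorname{tr}((CC^\dagger M R^\dagger R)^* A) = \langle A, M\rangle_F,
\]
since $CC^\dagger M R^\dagger R = M$ for $M$ with column space in $\mathcal{R}(C)$ and row space in $\mathcal{R}(R^*)$. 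Hence $\langle A-B, M\rangle_F = 0$, giving the decomposition.

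From the decomposition, any minimizer must satisfy $CXR = B$, and a direct check shows that $X_\star := C^\dagger A R^\dagger$ does the job: $CX_\star R = (CC^\dagger) A (R^\dagger R) = B$. To confirm that $X_\star$ is precisely the argmin (rather than merely \emph{a} minimizer, since $X$ need not be unique when $C$ or $R$ is rank-deficient in the appropriate direction), I would pass to the vectorized form and use the identity $\operatorname{vec}(CXR) = (R^\top \otimes C)\operatorname{vec}(X)$. This reduces the problem to a standard linear least-squares problem whose minimum-norm solution is
\[
\operatorname{vec}(X_\star) = (R^\top \otimes C)^\dagger \operatorname{vec}(A) = \bigl((R^\dagger)^\top \otimes C^\dagger\bigr)\operatorname{vec}(A) = \operatorname{vec}(C^\dagger A R^\dagger),
\]
where I use the Kronecker pseudoinverse identity $(P\otimes Q)^\dagger = P^\dagger \otimes Q^\dagger$.

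The main obstacle is really just bookkeeping: verifying the orthogonality of $A-B$ to the bilinear subspace and being careful that the pseudoinverse identities used (self-adjointness of $CC^\dagger$ and $R^\dagger R$, and the Kronecker-pseudoinverse formula) actually apply. Once these are in hand, the result is a quick consequence of the projection theorem combined with the standard least-squares characterization of the Moore--Penrose pseudoinverse.
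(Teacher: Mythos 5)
Your proof is correct and complete. Note that the paper does not actually prove this proposition --- it is stated with a citation to the reference \texttt{stewart\_minimizer} and used as a black box --- so there is no in-paper argument to compare against; your write-up supplies a valid, self-contained proof. The two halves of your argument fit together properly: the orthogonality computation $\langle A - CC^\dagger A R^\dagger R,\, CYR\rangle_F = 0$ (which relies only on $CC^\dagger$ and $R^\dagger R$ being Hermitian idempotents and on $CC^\dagger M R^\dagger R = M$ for $M$ in the bilinear subspace) yields the Pythagorean identity and hence that every minimizer $X$ satisfies $CXR = CC^\dagger A R^\dagger R$; the vectorization step then correctly pins down $C^\dagger A R^\dagger$ as the distinguished (minimum Frobenius norm) element of the possibly non-unique solution set, using $\operatorname{vec}(CXR) = (R^\top \otimes C)\operatorname{vec}(X)$ and $(P\otimes Q)^\dagger = P^\dagger\otimes Q^\dagger$ together with $(R^\top)^\dagger = (R^\dagger)^\top$, all of which hold over $\C$ as well as $\R$. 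Your caveat about non-uniqueness is apt: when $C$ lacks full column rank or $R$ lacks full row rank the literal argmin is a set, and the proposition is properly read as identifying the minimum-norm minimizer, which is exactly what your second step establishes. The only cosmetic remark is that the first step alone already suffices for every use the paper makes of this proposition (namely, that $CC^\dagger A R^\dagger R$ is the best approximation of the form $CXR$ in Frobenius norm), so the Kronecker argument is extra precision rather than a necessity.
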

The approximation $A\approx CC^\dagger AR^\dagger R$ corresponds to projecting $A$ onto the span of the given columns and rows, which is a natural candidate for a good approximation (though interestingly Proposition \ref{PROP:CAR} does not hold for other norms, {\color{black}see \cite[Example 4.4]{HH2019}}).  The quality of a CUR approximation for matrices of full rank has been considered in many works in the theoretical Computer Science literature, e.g., \cite{BoutsidisOptimalCUR,DKMIII,DM05,DMM08,DMPNAS,SorensenDEIMCUR,WS_2017}.  Most of these works focus on randomly sampling columns and rows to form the approximation; however, these works consider many different choices for the middle matrix {\color{black}$U$} in the CUR approximation.  Nonetheless, there are deterministic methods of selecting columns given in \cite{SorensenDEIMCUR,VoroninMartinsson}, the latter of which first uses a fast QR factorization of $A$ and subsequently implicitly forms the CUR approximation.

In the event that $A$ is actually low rank, a characterization of exact \textit{CUR decompositions} was given by the authors in \cite{HH2019}, which we restate here for the reader's convenience.

\begin{theorem}[{\cite[Theorem 5.5]{HH2019}}]\label{THM:Characterization}
Let $A\in\K^{m\times n}$, $I\subset[m]$, and $J\subset[n]$.  Let $C=A(:,J)$, $R=A(I,:)$, and $U=A(I,J)$.  Then the following are equivalent:
\begin{enumerate}[(i)]
    \item $\rank(U)=\rank(A)$
    \item $A=CU^\dagger R$
    \item $A = CC^\dagger AR^\dagger R$
    \item $A^\dagger = R^\dagger UC^\dagger$
    \item $\rank(C)=\rank(R)=\rank(A)$.
\end{enumerate}
Moreover, if any of the equivalent conditions above hold, then $U^\dagger = C^\dagger AR^\dagger$.
\end{theorem}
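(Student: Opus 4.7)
The plan is to establish the five-way equivalence in two phases. First I would show the chain $(i)\Leftrightarrow(v)\Leftrightarrow(iii)$ by elementary range arguments, and also that $(ii)$ and $(iv)$ each force $(v)$ by rank-of-product inequalities. Then I would derive the remaining harder implications $(v)\Rightarrow(ii),(iv)$ together with the moreover $U^\dagger=C^\dagger A R^\dagger$ from a single compact-SVD computation.

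For $(i)\Rightarrow(v)$, the point is that $U$ is a submatrix of both $C$ and $R$, so $\rank(U)\leq\min\{\rank(C),\rank(R)\}\leq\rank(A)$, and equality forces equality throughout. The converse $(v)\Rightarrow(i)$ is the cute step: since $\mathcal{R}(C)\subseteq\mathcal{R}(A)$ automatically and the dimensions match, $\mathcal{R}(C)=\mathcal{R}(A)$, so $A=CX$ for $X=C^\dagger A$; restricting rows to $I$ gives $R=A(I,:)=C(I,:)X=UX$, whence $\rank(R)\leq\rank(U)$ and the trivial reverse inequality yields $\rank(U)=r$. For $(v)\Leftrightarrow(iii)$, under $(v)$ the matrices $CC^\dagger$ and $R^\dagger R$ are orthogonal projections onto $\mathcal{R}(C)=\mathcal{R}(A)$ and $\mathcal{R}(R^*)=\mathcal{R}(A^*)$ respectively, hence act as the identity on the columns/rows of $A$; conversely $(iii)$ gives $\mathcal{R}(A)\subseteq\mathcal{R}(C)$ and $\mathcal{R}(A^*)\subseteq\mathcal{R}(R^*)$, returning $(v)$. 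The implications $(ii)\Rightarrow(v)$ and $(iv)\Rightarrow(v)$ are identical in spirit: $A=CU^\dagger R$ gives $\mathcal{R}(A)\subseteq\mathcal{R}(C)$, and $A^\dagger=R^\dagger UC^\dagger$ gives $\mathcal{R}(A^*)=\mathcal{R}(A^\dagger)\subseteq\mathcal{R}(R^\dagger)=\mathcal{R}(R^*)$, each combined with the free reverse inclusion.

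For the remaining content, I would take the compact SVD $A=W\Sigma V^*$ with $\Sigma\in\K^{r\times r}$ invertible, set $W_I:=W(I,:)$ and $V_J:=V(J,:)$, and note that $C=W\Sigma V_J^*$, $R=W_I\Sigma V^*$, $U=W_I\Sigma V_J^*$. Under $(v)$, both $W_I$ and $V_J$ must have full column rank $r$ (otherwise $\rank(U)<r$). The factored pseudoinverse rule $(BMD)^\dagger=D^\dagger M^{-1}B^\dagger$, valid when $B$ has full column rank, $M$ is invertible, and $D$ has full row rank, then yields
\[U^\dagger=V_J(V_J^*V_J)^{-1}\Sigma^{-1}(W_I^*W_I)^{-1}W_I^*,\]
together with analogous formulas $C^\dagger=V_J(V_J^*V_J)^{-1}\Sigma^{-1}W^*$ and $R^\dagger=V\Sigma^{-1}(W_I^*W_I)^{-1}W_I^*$. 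Expanding $CU^\dagger R$, $R^\dagger UC^\dagger$, and $C^\dagger AR^\dagger$ and using $W^*W=V^*V=I_r$ together with the cancellations $(V_J^*V_J)(V_J^*V_J)^{-1}=I_r$ and $(W_I^*W_I)^{-1}(W_I^*W_I)=I_r$ collapses each product to $A$, $A^\dagger$, and $U^\dagger$ respectively, delivering $(ii)$, $(iv)$, and the moreover simultaneously.

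The main obstacle is the pseudoinverse factorization step, since $(BMD)^\dagger\neq D^\dagger M^{-1}B^\dagger$ in general; what rescues it here is exactly the full column-rank of $W_I$ and $V_J$ guaranteed by $(v)$, so the rank-equality condition does double duty as both a hypothesis and as the precise algebraic condition that makes the factorization valid. Once that identity is in place the rest is bookkeeping, and the entire second phase of the proof reduces to one mechanical SVD computation.
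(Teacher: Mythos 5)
Your argument is correct and complete, and the logical closure works: $(i)\Leftrightarrow(v)\Leftrightarrow(iii)$ by the range/dimension arguments, $(ii)\Rightarrow(v)$ and $(iv)\Rightarrow(v)$ by rank-of-product inequalities, and $(v)\Rightarrow(ii),(iv)$ plus the moreover from the SVD computation. Note, though, that the paper does not prove Theorem \ref{THM:Characterization} at all --- it is imported from \cite[Theorem 5.5]{HH2019} and restated without proof --- so there is no in-text argument to compare against line by line. Your central device, the factored pseudoinverse $(W_I\Sigma V_J^*)^\dagger=(V_J^*)^\dagger\Sigma^{-1}W_I^\dagger$ justified by $W_I$ having full column rank and $V_J^*$ full row rank, is exactly the identity the authors deploy in Appendix \ref{APP:ProofPROP:NormTerms} to prove Proposition \ref{PROP:NormTerms}, so your proof is very much in the spirit of the paper's own toolkit; your observation that condition $(v)$ is precisely the hypothesis that legitimizes this otherwise-false factorization is the right conceptual takeaway, and your $(v)\Rightarrow(i)$ step via $R=UX$ with $X=C^\dagger A$ is a clean replacement for the null-space lemmas (Lemma \ref{LEM:Projections}, Proposition \ref{PROP:U=RAC}) the paper quotes from the same reference. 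Two small tightenings: in the written paragraph for $(ii)\Rightarrow(v)$ you only derive $\mathcal{R}(A)\subseteq\mathcal{R}(C)$, so you should also record $\rank(A)=\rank(CU^\dagger R)\leq\rank(R)$ (your opening plan already promises this via rank-of-product, so it is an omission of exposition rather than of substance, and the symmetric remark applies to $(iv)$); and the degenerate case $A=0$, where the compact SVD with invertible $\Sigma$ is vacuous, should be dispatched in a sentence since every condition then holds trivially.
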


Note that this theorem suggests at least two natural CUR \textit{approximations} to a general matrix $A$, namely $A\approx CC^\dagger AR^\dagger R$, and $A\approx CU^\dagger R$.  We will discuss both variants and several rank truncations in the sequel.

\section{Perturbations of CUR Approximations}\label{SEC:Perturbation}

We now turn to a perturbation analysis suggested by the CUR approximations described above.  Our primary task will be to consider matrices of the form \[\widetilde{A} = A+E,\] where $A$ has low rank $k<\min\{m,n\}$, and $E$ is an arbitrary (typically full-rank) noise matrix. We ask the question: if we choose column and row submatrices of $\widetilde{A}$, how do CUR \textit{approximations} of $\widetilde{A}$ of the forms suggested by Theorem \ref{THM:Characterization} relate to CUR \textit{decompositions} of $A$?  

To set some notation, we consider $\widetilde{C}=\widetilde{A}(:,J)$, $\widetilde{R}=\widetilde{A}(I,:)$, and $\widetilde{U}=\widetilde{A}(I,J)$ for some index sets $I$ and $J$, and we write 
\begin{equation}\label{EQ:tildes}
\widetilde{C} =  C+E(:,J),\quad \widetilde{R} = R+E(I,:),\quad \widetilde{U} = U+E(I,J),
\end{equation}
where $C := A(:,J)$, $R:=A(I,:)$ and $U:=A(I,J)$.
Thus if we choose columns and rows, $\widetilde{C}$ and $\widetilde{R}$ of $\widetilde{A}$, we seek to determine how approximation of $\widetilde{A}$ by $\widetilde{C}$ and $\widetilde{R}$ compares to the underlying approximation of the low rank matrix $A$ by its columns and rows, $C$ and $R$.  

For experimentation in the sequel we will consider $E$ to be a random matrix drawn from a certain distribution, but here we do not make any assumption on its entries.  We are principally interested in the case that $E$ is ``small" in a suitable sense, and so the observed matrix $\widetilde{A}$ is really a small perturbation of the low rank matrix $A$.  To this end, most of our analysis will contain upper bounds on a CUR approximation of $\widetilde{A}$ in terms of a norm of the noise $E$.

Note that we are interested in recovering the low-rank matrix $A$, but the approximations suggested above ($\widetilde{A}\approx\widetilde{C}\widetilde{U}^\dagger\widetilde{R}$ and $\widetilde{A}\approx\widetilde{C}\widetilde{C}^\dagger\widetilde{A}\widetilde{R}^\dagger\widetilde{R}$) are not necessarily low-rank.  Indeed, both approximations will typically have rank $\min\{|I|,|J|\}$, which could be larger than $k$ in general.  Therefore, we also consider various ways of enforcing the rank in the case that it is known or well-estimated.  Unfortunately, there is no canonical way to do this as we will demonstrate in Section \ref{SEC:Rank}.  Our perturbation estimates will analyze the following approximation errors:

\begin{itemize}
    \item $\|A-\widetilde{C}\widetilde{C}^\dagger\widetilde{A}\widetilde{R}^\dagger\widetilde{R}\|$
    \medskip
    \item $\|A-\widetilde{C}\widetilde{U}^\dagger\widetilde{R}\|$
    \medskip
    \item $\|A-\widetilde{C}[\widetilde{U}]_\tau^\dagger\widetilde{R}\|$
    \medskip
    \item $\|A-\widetilde{C}\widetilde{U}_k^\dagger\widetilde{R}\|$
    \medskip
    \item $\|A-\widetilde{C}_k\widetilde{C}_k^\dagger\widetilde{A}\widetilde{R}_k^\dagger\widetilde{R}_k\|$.
\end{itemize}
In our discussion in Section \ref{SEC:Rank}, we will also discuss the approximation $A\approx (\widetilde{C}\widetilde{U}^\dagger\widetilde{R})_k$.

For ease of notation, we will use the conventions that $E_I:=E(I,:)$, $E_J:=E(:,J)$, and $E_{I,J}:=E(I,J)$; since $I$ and $J$ are always reserved for subsets of the rows and columns, respectively, we trust this will not cause confusion. 

\subsection{Assumptions}

To make the statement of results more simple, we will always make the following assumptions throughout the rest of this paper.  $\widetilde{A} = A+E$ will be in $\K^{m\times n}$ with $\rank(A)=k$, and $C,U,R,\widetilde{C},\widetilde{U},\widetilde{R},E_I,E_J,E_{I,J}$ will be as in \eqref{EQ:tildes} with $I\subset[m]$ and $J\subset[n]$ being the row and column index sets, respectively.  We will always assume that $\rank(C)=\rank(U)=\rank(R)=k$, and that $\|\cdot\|$ is a normalized, uniformly generated, unitarily invariant, submultiplicative norm.  Given this assumption on the ranks, Proposition \ref{PROP:NormTerms} is valid and will be utilized frequently. 

{
\begin{remark}\label{REM:2norm}
For simplicity of reading, we state all bounds in the sequel for arbitrary norms satisfying the above assumptions; in particular, we use the pessimistic inequality $\|AB\|\leq \|A\|\|B\|$.  But we note that at any stage, we can use the fact that $\|AB\|\leq\|A\|_2\|B\|$, which gives a better bound.  In some instances, we will highlight how using the latter affects the right-hand sides of the given inequalities.
\end{remark}
}

{\color{black}\section{Main Results}\label{SEC:Main}
Now let us state the main results which are proven in the sequel.  In the bounds stated below, if $C=A(:,J)$ and $R=A(I,:)$ and $A=W_k\Sigma_kV_k^*$, we will often state error bounds in terms of norms of pseudoinverses of the corresponding row submatrices of the singular vectors $W_k$ and $V_k$.  To that end, we define $W_{k,I}:=W_k(I,:)$ and $V_{k,J}:=V_k(J,:)$.
}

\subsection{Perturbation Estimates for CUR Approximations With No Rank Enforcement} 

To begin, let us consider the CUR approximation suggested by the two exact decompositions of Theorem \ref{THM:Characterization}. 

\subsubsection{Projection Based Approximation: $A\approx \widetilde{C}\widetilde{C}^\dagger \widetilde{A}\widetilde{R}^\dagger\widetilde{R}$}\label{SEC:CCARR}

We begin our perturbation analysis by considering the approximation suggested by Theorem \ref{THM:Characterization}($iii$).  Our main result is the following.

\begin{theorem}\label{THM:ErrFApp}
The following holds:
\[ \|A-\widetilde{C}\widetilde{C}^\dagger \widetilde{A}\widetilde{R}^\dagger \widetilde{R}\|\leq \|E_I\|\|AR^\dagger\|+\|E_J\|\|C^\dagger A\|+3\|E\|.\]
Hence,
\[\|A-\widetilde{C}\widetilde{C}^\dagger \widetilde{A}\widetilde{R}^\dagger \widetilde{R}\|\leq \|E\|(\|W_{k,I}^\dagger\|+\|V_{k,J}^\dagger\|+3).\]
\end{theorem}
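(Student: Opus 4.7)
The plan is to split the error into a short telescoping sum and then exploit a ``substitution trick'' that converts apparent $O(\|A\|)$ quantities into ones proportional to $\|E_I\|$ or $\|E_J\|$. The backbone is the identity $A=CC^\dagger A=AR^\dagger R$, which is a direct consequence of $\rank(C)=\rank(R)=k$ via Theorem~\ref{THM:Characterization}. My starting point will be the decomposition
\[
A - \widetilde{C}\widetilde{C}^\dagger \widetilde{A}\widetilde{R}^\dagger\widetilde{R} \;=\; A(I-\widetilde{R}^\dagger\widetilde{R}) \;+\; (I-\widetilde{C}\widetilde{C}^\dagger)A\widetilde{R}^\dagger\widetilde{R} \;-\; \widetilde{C}\widetilde{C}^\dagger E \widetilde{R}^\dagger\widetilde{R},
\]
obtained by inserting and cancelling $A\widetilde{R}^\dagger\widetilde{R}$ and $\widetilde{C}\widetilde{C}^\dagger A\widetilde{R}^\dagger\widetilde{R}$. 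The last summand is handled immediately, since $\widetilde{C}\widetilde{C}^\dagger$ and $\widetilde{R}^\dagger\widetilde{R}$ are orthogonal projections (hence spectral-norm contractions), giving $\|\widetilde{C}\widetilde{C}^\dagger E\widetilde{R}^\dagger\widetilde{R}\|\le\|E\|$.

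For the first two summands I would use the substitution trick. Starting from $A=AR^\dagger R$, writing $R=\widetilde{R}-E_I$, and invoking $\widetilde{R}(I-\widetilde{R}^\dagger\widetilde{R})=0$ collapses the expression to $A(I-\widetilde{R}^\dagger\widetilde{R})=-AR^\dagger E_I(I-\widetilde{R}^\dagger\widetilde{R})$, bounded by $\|AR^\dagger\|\|E_I\|$. Symmetrically, $A=CC^\dagger A$ together with $C=\widetilde{C}-E_J$ and $(I-\widetilde{C}\widetilde{C}^\dagger)\widetilde{C}=0$ yields $(I-\widetilde{C}\widetilde{C}^\dagger)A=-(I-\widetilde{C}\widetilde{C}^\dagger)E_J C^\dagger A$; after absorbing the contraction $\widetilde{R}^\dagger\widetilde{R}$ on the right, this contributes at most $\|E_J\|\|C^\dagger A\|$. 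Summing the three pieces produces the first inequality (in fact with constant $1$ in front of $\|E\|$, easily implying the stated constant $3$).

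The second inequality will then follow from two elementary observations. First, $\|E_I\|,\|E_J\|\le\|E\|$ holds for every unitarily invariant norm, because the singular values of a row or column submatrix are majorized by those of $E$. Second, the identifications $\|AR^\dagger\|=\|W_{k,I}^\dagger\|$ and $\|C^\dagger A\|=\|V_{k,J}^\dagger\|$ come from the compact SVD $A=W_k\Sigma_kV_k^*$: one obtains $R=W_{k,I}\Sigma_kV_k^*$ and $C=W_k\Sigma_kV_{k,J}^*$, and since the rank hypotheses force $W_{k,I}$ and $V_{k,J}$ to have full column rank, routine pseudoinverse formulas give $AR^\dagger=W_k W_{k,I}^\dagger$ and $C^\dagger A=(V_{k,J}^\dagger)^*V_k^*$; unitary invariance of $\|\cdot\|$ then strips off the orthonormal factors. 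This is exactly the content of Proposition~\ref{PROP:NormTerms} referenced in the setup.

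The main obstacle is the substitution trick itself. Without it, one is stuck bounding $(I-\widetilde{C}\widetilde{C}^\dagger)A$ by $\|A\|$, which fails to scale with the noise and destroys any meaningful perturbation estimate. Converting this quantity into one proportional to $\|E_J\|$ times a conditioning factor $\|C^\dagger A\|$ (and symmetrically on the row side) is the sole place where the rank hypothesis $\rank(C)=\rank(R)=k$ is actively used, and it is the key mechanism behind the whole family of CUR perturbation bounds developed in the paper.
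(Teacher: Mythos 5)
Your proof is correct and follows essentially the same route as the paper's: the paper splits the error via the triangle inequality into $\|A-\widetilde{C}\widetilde{C}^\dagger\widetilde{A}\|+\|\widetilde{A}-\widetilde{A}\widetilde{R}^\dagger\widetilde{R}\|$ and then applies exactly your substitution trick ($A=CC^\dagger A=AR^\dagger R$, $C=\widetilde{C}-E_J$, $(I-\widetilde{C}\widetilde{C}^\dagger)\widetilde{C}=0$, and symmetrically for $R$) in Lemma~\ref{LEM:CCA1}, finishing with Proposition~\ref{PROP:NormTerms} just as you do. Your exact three-term decomposition is merely tidier bookkeeping that yields the constant $1$ in front of $\|E\|$ instead of the stated $3$, but the key mechanism is identical.
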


{\color{black}Theorem \ref{THM:ErrFApp} shows that the error in the projection-based CUR method is controlled by the pseudoinverses of the submatrices of the orthogonal singular vectors, and is linear in the norm of the noise $E$.  This bound is comparable to that of Sorensen and Embree \cite{SorensenDEIMCUR} which is of the form $\|\widetilde{A}-\widetilde{C}\widetilde{C}^\dagger\widetilde{A}\widetilde{R}^\dagger\widetilde{R}\|\leq\sigma_{k+1}(\widetilde{A})(\|W_{k,I}^{-1}\|+\|V_{k,J}^{-1}\|)$.  These bounds are not directly comparable in the general case because their proof requires $|I|=|J|=k$ (also, their result is stated only for the DEIM method of selecting $I$ and $J$ and only for the spectral norm, but holds more generally).  If we restrict to this case and take $E=\widetilde{A}-\widetilde{A}_k$, then our bound contains an extra $2\sigma_{k+1}(\widetilde{A})$ compared to theirs.  So we give up a small constant in our error bound in exchange for more flexible choices of column and row indices. These bounds also illustrate why the more successful approximation results for CUR approximations use the singular vectors of the input matrix to select columns; doing so can substantially decrease the norms of the matrices $W_{k,I}^\dagger$ and $V_{k,J}^\dagger$ appearing above. 

Note also that generic bounds for pseudoinverses of submatrices of truncated singular vectors can be hard to come by, but under additional incoherence assumptions on the matrix $\widetilde{A}$, one can give probabilistic bounds on the norms of $W_{k,I}$ and $V_{k,J}$ when $I$ and $J$ are sampled uniformly at random as was done in \cite{tropp2011improved} (and applied to CUR in \cite{DemanetWu}); however, we do not explore this here.
}

\subsubsection{Non-projection Based Approximation: $A\approx\widetilde{C}[\widetilde{U}]_\tau^\dagger\widetilde{R}$}\label{SEC:A-CUR}

Now we turn to considering the approximation suggested by Theorem \ref{THM:Characterization}($ii$).  We formulate our approximation in a slightly more general form by thresholding the singular values of $\widetilde{U}$ by a fixed parameter $\tau\geq0$.  Of course provided $0\leq\tau\leq\sigma_r(\widetilde{U})$ where $\rank(\widetilde{U})=r$, we have $\widetilde{C}[\widetilde{U}]_\tau^\dagger\widetilde{R} = \widetilde{C}\widetilde{U}^\dagger\widetilde{R}$, and hence this framework encompasses the case that no thresholding is actually done (recall the definition of $[\widetilde{U}]_\tau$ from Section \ref{SEC:Notation}).

This approximation scheme was studied by Osinsky et al.~\cite{Osinsky2018} and previously by Goreinov et al.~\cite{Goreinov2,Goreinov,Goreinov3}, and we recover similar perturbation results to those in the former, but by a different proof method, which we provide in full. The reason for including our analysis is that it gives some more qualitative estimates, and additionally we get slightly better error bounds since they are in terms of submatrices of the noise $E$.  Moreover, our bounds hold for arbitrary norms satisfying the conditions above (e.g., for all Schatten $p$--norms), which is a strengthening of the spectral and Frobenius norm guarantees of the aforementioned works.  Additionally, our estimation techniques are amenable to performing a novel analysis of different ways of enforcing the rank in the CUR approximation, which is done in the sequel.

\begin{theorem}\label{THM:CURTilde}
Given $\tau\geq0$, the following holds:
\begin{multline*}
    \|A-\widetilde{C}[\widetilde{U}]_\tau^\dagger\widetilde{R}\| \leq  \|W_{k,I}^\dagger\|\|E_I\|+\|V_{k,J}^\dagger\|\|E_J\|+\|W_{k,I}^\dagger\|\|V_{k,J}^\dagger\|(2\|E_{I,J}\|+\|[\widetilde{U}]_{\tau}-U\|)\\
     +  \|[\widetilde{U}]_\tau^\dagger\|\left[\left(\|W_{k,I}^\dagger\|\|E_I\|+\|V_{k,J}^\dagger\|\|E_J\|+\|W_{k,I}^\dagger\|\|V_{k,J}^\dagger\|\|E_{I,J}\|\right)\|E_{I,J}\|+\|E_I\|\|E_J\|  \right].
\end{multline*}
\end{theorem}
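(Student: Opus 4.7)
The starting point is the exact factorization $A=CU^\dagger R$, guaranteed by Theorem~\ref{THM:Characterization} under the standing assumption $\rank(C)=\rank(U)=\rank(R)=k$, together with the absorption identities $CU^\dagger U=C$ and $UU^\dagger R=R$ that it immediately implies. The plan is to split
\[
A-\widetilde C[\widetilde U]_\tau^\dagger\widetilde R
= \bigl(A-C[\widetilde U]_\tau^\dagger R\bigr) - C[\widetilde U]_\tau^\dagger E_I - E_J[\widetilde U]_\tau^\dagger R - E_J[\widetilde U]_\tau^\dagger E_I,
\]
bound each of the four pieces separately, and then collect the results using Proposition~\ref{PROP:NormTerms}, which supplies the norm identities $\|CU^\dagger\|=\|W_{k,I}^\dagger\|$ and $\|U^\dagger R\|=\|V_{k,J}^\dagger\|$ valid in every admissible norm.

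The central computation is on the first piece. Using the absorption identities it rewrites as $CU^\dagger\bigl(U-U[\widetilde U]_\tau^\dagger U\bigr)U^\dagger R$. Setting $\Delta:=\widetilde U-[\widetilde U]_\tau$, the key algebraic fact is the pair of annihilations $\Delta[\widetilde U]_\tau^\dagger=0=[\widetilde U]_\tau^\dagger\Delta$, which hold because, in the SVD of $\widetilde U$, the nonzero diagonal entries of $[\Sigma]_\tau^\dagger$ sit exactly where the entries of $\Sigma-[\Sigma]_\tau$ vanish. Writing $U=[\widetilde U]_\tau-(E_{I,J}-\Delta)$ and expanding $U[\widetilde U]_\tau^\dagger U$ via these annihilations collapses all cross terms involving $\Delta$ and yields
\[
U-U[\widetilde U]_\tau^\dagger U = -\bigl([\widetilde U]_\tau-U\bigr)+[\widetilde U]_\tau[\widetilde U]_\tau^\dagger E_{I,J}+E_{I,J}[\widetilde U]_\tau^\dagger[\widetilde U]_\tau-E_{I,J}[\widetilde U]_\tau^\dagger E_{I,J}.
\]
Multiplying on the left by $CU^\dagger$ and on the right by $U^\dagger R$, taking norms, and using submultiplicativity together with the spectral-norm bound $\|[\widetilde U]_\tau[\widetilde U]_\tau^\dagger\|_2\leq 1$ (and the symmetric one for $[\widetilde U]_\tau^\dagger[\widetilde U]_\tau$) produces exactly the summands $\|W_{k,I}^\dagger\|\|V_{k,J}^\dagger\|\bigl(\|[\widetilde U]_\tau-U\|+2\|E_{I,J}\|\bigr)$ and $\|W_{k,I}^\dagger\|\|V_{k,J}^\dagger\|\|[\widetilde U]_\tau^\dagger\|\|E_{I,J}\|^2$ appearing in the statement.

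For the second and third pieces the same trick applies once more: $C[\widetilde U]_\tau^\dagger E_I=CU^\dagger U[\widetilde U]_\tau^\dagger E_I=CU^\dagger(\widetilde U-E_{I,J})[\widetilde U]_\tau^\dagger E_I$, and then the identity $\widetilde U[\widetilde U]_\tau^\dagger=[\widetilde U]_\tau[\widetilde U]_\tau^\dagger$ renders one of the summands harmless, yielding the leading $\|W_{k,I}^\dagger\|\|E_I\|$ and the mixed $\|W_{k,I}^\dagger\|\|[\widetilde U]_\tau^\dagger\|\|E_{I,J}\|\|E_I\|$; the symmetric manipulation works for $E_J[\widetilde U]_\tau^\dagger R$. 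The fourth piece is bounded directly by submultiplicativity as $\|E_J\|\|[\widetilde U]_\tau^\dagger\|\|E_I\|$. Summing all seven resulting bounds and regrouping them according to which norm factors appear yields exactly the inequality in the theorem.

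The main obstacle is the first piece: one must recognize the annihilation identities for $\Delta$ and then track signs and multiplicities carefully enough that the constant $2$ in front of $\|E_{I,J}\|$ in the middle-order term emerges correctly. Once those identities are in hand, the remainder is a disciplined application of submultiplicativity in the general unitarily invariant norm, exploiting the spectral-norm factor of the two orthogonal projections in order not to accumulate any extraneous factors.
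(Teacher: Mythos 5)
Your proposal is correct and follows essentially the same route as the paper, which splits off the core term $A-C[\widetilde{U}]_\tau^\dagger R$, rewrites it via $C=CU^\dagger U$ and $R=UU^\dagger R$, and expands $U[\widetilde{U}]_\tau^\dagger U$ using the thresholding identity (the paper phrases your annihilation $\Delta[\widetilde{U}]_\tau^\dagger=0$ as $\widetilde{U}[\widetilde{U}]_\tau^\dagger\widetilde{U}=[\widetilde{U}]_\tau$ and the fact that $\widetilde{U}[\widetilde{U}]_\tau^\dagger$ is an orthogonal projection, which is equivalent). The only difference is cosmetic: the paper handles the cross terms by first bounding $\|C[\widetilde{U}]_\tau^\dagger\|$ and $\|[\widetilde{U}]_\tau^\dagger\widetilde{R}\|$ in a separate lemma rather than bounding the four pieces of your decomposition directly.
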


\begin{corollary}\label{COR:CURTilde}
Setting $\tau=0$, we have
\begin{multline*}
    \|A-\widetilde{C}\widetilde{U}^\dagger\widetilde{R}\| \leq  \left(\|W_{k,I}^\dagger\|+\|V_{k,J}^\dagger\|+3\|W_{k,I}^\dagger\|\|V_{k,J}^\dagger\|\right)\|E\|\\
     +  \|\widetilde{U}^\dagger\|\left(\|W_{k,I}^\dagger\|+\|V_{k,J}^\dagger\|+\|W_{k,I}^\dagger\|\|V_{k,J}^\dagger\|+1\right)\|E\|^2.
\end{multline*}
\end{corollary}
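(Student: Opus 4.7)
The plan is to derive the corollary directly from Theorem~\ref{THM:CURTilde} by specializing to $\tau=0$ and then bounding each submatrix of $E$ by $E$ itself. Recall that $[\widetilde{U}]_\tau$ is obtained by zeroing out singular values of $\widetilde{U}$ smaller than $\tau$, so $[\widetilde{U}]_0 = \widetilde{U}$. Consequently $[\widetilde{U}]_0^\dagger = \widetilde{U}^\dagger$, and furthermore
\[
\|[\widetilde{U}]_0 - U\| \;=\; \|\widetilde{U} - U\| \;=\; \|E_{I,J}\|,
\]
using \eqref{EQ:tildes}. Substituting these two identities into Theorem~\ref{THM:CURTilde} collapses the term $2\|E_{I,J}\|+\|[\widetilde{U}]_\tau-U\|$ into $3\|E_{I,J}\|$.

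Next I would replace each of $\|E_I\|$, $\|E_J\|$, and $\|E_{I,J}\|$ by the larger quantity $\|E\|$. This is the one step that needs a word of justification: because $\|\cdot\|$ is unitarily invariant and uniformly generated by a symmetric function of the singular values, and because singular values of a submatrix are dominated by those of the full matrix (Cauchy-type interlacing for singular values of row/column restrictions), we have $\|E_I\|,\|E_J\|,\|E_{I,J}\| \le \|E\|$. This is the standard monotonicity of unitarily invariant norms under passing to submatrices and is used routinely throughout the paper.

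With those replacements, the first line of the bound in Theorem~\ref{THM:CURTilde} becomes
\[
\left(\|W_{k,I}^\dagger\|+\|V_{k,J}^\dagger\|+3\|W_{k,I}^\dagger\|\|V_{k,J}^\dagger\|\right)\|E\|,
\]
after factoring $\|E\|$ out of the three linear terms. For the bracketed quantity multiplied by $\|\widetilde{U}^\dagger\|$, each product of two submatrix norms of $E$ becomes $\|E\|^2$, and the constant $1$ arises from the stand-alone $\|E_I\|\|E_J\|$ term. Factoring $\|E\|^2$ out gives
\[
\|\widetilde{U}^\dagger\|\left(\|W_{k,I}^\dagger\|+\|V_{k,J}^\dagger\|+\|W_{k,I}^\dagger\|\|V_{k,J}^\dagger\|+1\right)\|E\|^2,
\]
which matches the second line of the claimed inequality. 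Summing the two contributions yields exactly the stated bound.

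Since the corollary is essentially an algebraic specialization of Theorem~\ref{THM:CURTilde}, there is no serious obstacle; the only non-mechanical ingredient is the submatrix norm inequality $\|E_I\|,\|E_J\|,\|E_{I,J}\|\le\|E\|$, which is standard for the class of norms under consideration. I would mention it explicitly so the reader does not have to reconstruct it.
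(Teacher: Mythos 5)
Your proposal is correct and is essentially the same derivation the paper intends: set $\tau=0$ so that $[\widetilde{U}]_0=\widetilde{U}$ and $\|[\widetilde{U}]_0-U\|=\|E_{I,J}\|$, collapse $2\|E_{I,J}\|+\|[\widetilde{U}]_0-U\|$ to $3\|E_{I,J}\|$, and then dominate every submatrix norm of $E$ by $\|E\|$ (a fact the paper itself invokes, e.g.\ in the proof of Theorem \ref{THM:ErrFApp}). Your explicit justification of the submatrix inequality via interlacing and monotonicity of the symmetric gauge function is a welcome, if optional, addition.
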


{
\begin{remark}
If the tighter bound suggested in Remark \ref{REM:2norm} is used, then the conclusion of Theorem \ref{THM:CURTilde} becomes
\begin{multline*}
    \|A-\widetilde{C}[\widetilde{U}]_\tau^\dagger\widetilde{R}\| \leq  \|W_{k,I}^\dagger\|_2\|E_I\|+\|V_{k,J}^\dagger\|_2\|E_J\|+\|W_{k,I}^\dagger\|_2\|V_{k,J}^\dagger\|_2(2\|E_{I,J}\|+\|[\widetilde{U}]_{\tau}-U\|)\\
     +  \|[\widetilde{U}]_\tau^\dagger\|_2\left[\left(\|W_{k,I}^\dagger\|_2\|E_I\|+\|V_{k,J}^\dagger\|_2\|E_J\|+\|W_{k,I}^\dagger\|_2\|V_{k,J}^\dagger\|_2\|E_{I,J}\|\right)\|E_{I,J}\|+\|E_I\|\|E_J\|  \right].
\end{multline*}
\end{remark}
}

\begin{remark}
The bounds above may be simplified in a couple of ways {\color{black}  for $\tau>0$}.  First, note that $\|[\widetilde{U}]_\tau^\dagger\|_2\leq\tau^{-1}$.  In some previous works, $\tau$ is chosen to offset the other norm terms to demonstrate the existence of nice upper bounds (e.g., taking $\tau=\|E\|$ or to be related to the inverse of the product $\|W_{k,I}^\dagger\|_2\|V_{k,J}^\dagger\|_2$).  This is not practical; however, we will mention in Section \ref{SEC:Comparison} how our results recover previous analyses in the literature in this direction.  Additionally, if the norm on the left-hand side is the spectral norm, we may estimate $\|[\widetilde{U}]_\tau-U\|_2\leq\tau+\|E_{I,J}\|_2$ by adding and subtracting $\widetilde{U}$ and applying the triangle inequality.
\end{remark}

Note that the approximation bound in Corollary \ref{COR:CURTilde} depends on $\widetilde{U}$ and hence must be considered preliminary, as this could be arbitrarily large.  Without additional assumptions, not much more may be said, but in Section \ref{SEC:MaxVol}, we analyze how one may improve the estimates herein by choosing maximal volume submatrices, and point the reader to some existing algorithms for doing so.

\subsection{Perturbation Estimates for Rank $k$ CUR Approximations} If $\rank(A)=k$ is known in advance, we are interested in enforcing this rank in any CUR approximation of $A$.  In this section, we consider two variants of rank enforcement.  Further discussion of the merits and drawbacks of both are found in Section \ref{SEC:Rank}.

\subsubsection{Enforcing the rank on $\widetilde{U}$} \label{SEC:CUkR}

If more than $k$ columns or rows of $\widetilde{A}$ are chosen, then the rank of $\widetilde{U}$ is typically larger than $k$.  Therefore, $\widetilde{C}\widetilde{U}^\dagger\widetilde{R}$ is an approximation of $A$ which has strictly larger rank.  It is natural to consider then what happens if the target rank is enforced.  There are many ways to enforce the rank, one of which that has been utilized for some time is to do so on the matrix $\widetilde{U}$.  If $\widetilde{A}$ is symmetric, positive semi-definite, then this rank-enforcement strategy is known to be deficient, but for generic matrices this is not so.  For further discussion on these matters, consult Section \ref{SEC:Rank}.  {\color{black}Note that below, $\widetilde{U}_k$ is the best rank $k$ approximation of $\widetilde{U}$, and $\widetilde{U}_k^\dagger$ is its Moore--Penrose pseudoinverse.}

\begin{theorem}\label{THM:PB}
{\color{black}Let $\mu\in[1,3]$ be the quantity given by Theorem \ref{THM:Stewart} ($\mu$ depends on the norm $\|\cdot\|$ chosen). Provided $\sigma_k(U)>2\mu\|E_{I,J}\|$, the following holds:}
    \begin{multline*}\label{IEQTHm4.10}
    \|A-\widetilde{C}\widetilde{U}_k^\dagger\widetilde{R}\| \leq  \left(\|W_{k,I}^\dagger\|\|E_I\|+\|V_{k,J}^\dagger\|\|E_J\|+4\|W_{k,I}^\dagger\|\|V_{k,J}^\dagger\|\|E_{I,J}\|\right)
     +  \frac{\|U^\dagger\|}{1-2\mu\|U^\dagger\|_2\|E_{I,J}\|}\times\\
     \left(\|W_{k,I}^\dagger\|\|E_I\|\|E_{I,J}\|+\|V_{k,J}^\dagger\|\|E_J\|\|E_{I,J}\|+\|W_{k,I}^\dagger\|\|V_{k,J}^\dagger\|\|E_{I,J}\|^2+\|E_I\|\|E_J\|\right).
\end{multline*}
\end{theorem}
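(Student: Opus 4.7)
The plan is to reduce Theorem \ref{THM:PB} directly to Theorem \ref{THM:CURTilde} by choosing the thresholding parameter $\tau$ so that $[\widetilde{U}]_\tau$ coincides with the rank-$k$ truncation $\widetilde{U}_k$, and then using Stewart's perturbation theorem (Theorem \ref{THM:Stewart}) to replace $\|\widetilde{U}_k^\dagger\|$ by an expression in terms of $\|U^\dagger\|$ and $\|E_{I,J}\|$ alone.

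First I would verify that, under the hypothesis $\sigma_k(U) > 2\mu\|E_{I,J}\|$, the choice $\tau := \|E_{I,J}\|_2$ actually yields $[\widetilde{U}]_\tau = \widetilde{U}_k$. This follows from Weyl's inequality applied to $\widetilde{U} = U + E_{I,J}$: since $\sigma_{k+1}(U) = 0$, one has $\sigma_{k+1}(\widetilde{U}) \leq \|E_{I,J}\|_2$, while $\sigma_k(\widetilde{U}) \geq \sigma_k(U) - \|E_{I,J}\|_2 > (2\mu - 1)\|E_{I,J}\| \geq \|E_{I,J}\|_2$ (using $\mu \geq 1$ and $\|\cdot\|_2 \leq \|\cdot\|$), so the thresholded matrix retains exactly the top $k$ singular components. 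Next I would invoke the Eckart--Young--Mirsky theorem combined with the triangle inequality to obtain
\[\|\widetilde{U}_k - U\| \leq \|\widetilde{U}_k - \widetilde{U}\| + \|\widetilde{U} - U\| \leq 2\|E_{I,J}\|,\]
which collapses the first-order cross term $\|W_{k,I}^\dagger\|\|V_{k,J}^\dagger\|(2\|E_{I,J}\| + \|[\widetilde{U}]_\tau - U\|)$ appearing in Theorem \ref{THM:CURTilde} to $4\|W_{k,I}^\dagger\|\|V_{k,J}^\dagger\|\|E_{I,J}\|$, matching the first parenthesis of the target bound.

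The remaining task is to estimate the scalar factor $\|\widetilde{U}_k^\dagger\|$ that multiplies the second-line terms coming from Theorem \ref{THM:CURTilde}. Applying Theorem \ref{THM:Stewart} to the rank-preserving pair $(U, \widetilde{U}_k)$ (both of rank $k$ by the previous step) produces a bound of the schematic form $\|\widetilde{U}_k^\dagger - U^\dagger\| \leq \mu \|U^\dagger\|_2 \|\widetilde{U}_k^\dagger\|_2 \|\widetilde{U}_k - U\|$. Specialising first to the spectral norm gives a self-referential inequality for $\|\widetilde{U}_k^\dagger\|_2$ which, by standard Neumann-series rearrangement, yields $\|\widetilde{U}_k^\dagger\|_2 \leq \|U^\dagger\|_2 / (1 - 2\mu\|U^\dagger\|_2\|E_{I,J}\|_2)$. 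Feeding this back into the general-norm triangle inequality $\|\widetilde{U}_k^\dagger\| \leq \|U^\dagger\| + \|\widetilde{U}_k^\dagger - U^\dagger\|$ and using $\|U^\dagger\|_2 \leq \|U^\dagger\|$ and $\|E_{I,J}\|_2 \leq \|E_{I,J}\|$ yields
\[\|\widetilde{U}_k^\dagger\| \leq \frac{\|U^\dagger\|}{1 - 2\mu\|U^\dagger\|_2\|E_{I,J}\|},\]
where positivity of the denominator is exactly the hypothesis, since $\|U^\dagger\|_2 = 1/\sigma_k(U)$. Substituting this estimate into the $\|[\widetilde{U}]_\tau^\dagger\|$ factor in Theorem \ref{THM:CURTilde} produces the second line of the target inequality.

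The principal obstacle is the self-referential character of the Stewart-style argument---the factor $\|\widetilde{U}_k^\dagger\|_2$ appears on both sides---and the delicate mixing of the spectral norm with the general norm in use. Care is required so that the final bound on $\|\widetilde{U}_k^\dagger\|$ carries $\|U^\dagger\|$ in the numerator and $\|U^\dagger\|_2$ in the denominator as stated, rather than $\|U^\dagger\|_2^2$ or other stray factors that a more naive application of Stewart's theorem would produce.
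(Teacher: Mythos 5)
Your proposal is correct and follows essentially the same route as the paper: the paper obtains the intermediate bound on $\|A-\widetilde{C}\widetilde{U}_k^\dagger\widetilde{R}\|$ by re-running the proof of Theorem \ref{THM:CURTilde} with $\widetilde{U}_k$ in place of $[\widetilde{U}]_\tau$ together with Mirsky's estimate $\|\widetilde{U}_k-U\|\le 2\|E_{I,J}\|$, and then controls $\|\widetilde{U}_k^\dagger\|$ by exactly the Stewart-type self-referential rearrangement you describe (Corollary \ref{COR:AdaggerBounds} and Lemma \ref{LEM:PBTerms}). The only cosmetic caveat is that, with the paper's convention that $[\cdot]_\tau$ retains singular values $\ge\tau$, the choice $\tau=\|E_{I,J}\|_2$ could also retain $\sigma_{k+1}(\widetilde{U})$ in the boundary case of equality, so one should take $\tau$ strictly inside $(\sigma_{k+1}(\widetilde{U}),\sigma_k(\widetilde{U}))$ --- a nonempty interval by your Weyl argument --- and, in bounding $\|\widetilde{U}_k^\dagger\|_2$, apply the general-norm Stewart inequality with the single constant $\mu$ (using $\|\cdot\|_2\le\|\cdot\|$) rather than the spectral-norm constant, to avoid mixing two different values of $\mu$.
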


\begin{remark}\label{REM:CUkR}
Note that all terms in the second line in the bound of Theorem \ref{THM:PB} are second order in the noise, whereas the first three terms are first order. In particular, if $\sigma_k(U)>4\mu\|E\|$, then  
\[
    \|A-\widetilde{C}\widetilde{U}_k^\dagger\widetilde{R}\| \leq  \left((1+\frac{1}{2\mu})(\|W_{k,I}^\dagger\|+\|V_{k,J}^\dagger\|)+(4+\frac{1}{2\mu})\|W_{k,I}^\dagger\|\|V_{k,J}^\dagger\|+\frac{1}{2\mu}\right)\|E\|.
\]
\end{remark}

{
\begin{remark}
Since $U^\dagger = W_{k,I}^\dagger \Sigma_k^\dagger V_{k,J}^\dagger$ and $\|\Sigma_k^\dagger\|=\|A^\dagger\|$, we may replace the fractional term in Theorem \ref{THM:PB} with
\[ \dfrac{\|W_{k,I}^\dagger\|\|V_{k,J}^\dagger\|\|A^\dagger\|}{1-2\mu\|W_{k,I}^\dagger\|_2\|V_{k,J}^\dagger\|_2\|A^\dagger\|_2\|E_{I,J}\|}\]
thus giving a bound independent of the chosen $U$.  Indeed, this means that the error bounds in Theorem \ref{THM:PB} are of the form
\[ \|A-\tilde{C}\tilde{U}_k^\dagger\tilde{R}\|\leq\|A-CU^\dagger R\| + O(\|E\|) + O(\|A^\dagger\|\|E\|^2). \]
That is, the first order terms depend essentially only on the noise, whereas the second order terms have dependence on $\|A^\dagger\|$.  Do note that the assumptions in Theorem \ref{THM:PB} imply that $\|E\|\|A^\dagger\|\leq C_1$ for some universal constant $C_1$, {\color{black}so the estimate on the right-hand side is still $O(\|E\|)$}; on the other hand, it could be that this quantity is small in some instances, so we leave the expression as is to denote the second order dependence on the noise matrix.
\end{remark}
}

\subsubsection{Projection onto $k$--dimensional subspaces}\label{SEC:CkCk}

Next let us consider what happens if we enforce the rank on both $\widetilde{C}$ and $\widetilde{R}$ and then form the projection-based approximation from these.  This corresponds to finding the best $k$--dimensional subspace that approximates the span of the columns of $\widetilde{C}$ and projecting $\widetilde{A}$ onto this subspace.  Ideally, this should well-approximate projecting $\widetilde{A}$ onto the span of the columns of $C$ itself.

\begin{theorem}\label{THM:CkARk}
We have
\[ \|A-\widetilde{C}_k\widetilde{C}_k^\dagger \widetilde{A}\widetilde{R}_k^\dagger \widetilde{R}_k\|\leq 2\left(\|E_J\|\|W_{k,I}^\dagger\|+\|E_I\|\|V_{k,J}^\dagger\|+\frac32\|E\|\right). \]
Hence,
\[\|A-\widetilde{C}_k\widetilde{C}_k^\dagger \widetilde{A}\widetilde{R}_k^\dagger \widetilde{R}_k\|\leq 2\|E\|\left(\|W_{k,I}^\dagger\|+\|V_{k,J}^\dagger\|+\frac32\right).\]
\end{theorem}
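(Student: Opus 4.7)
The plan is to mirror the proof of Theorem~\ref{THM:ErrFApp}, using $\widetilde{C}_k$ and $\widetilde{R}_k$ in place of $\widetilde{C}$ and $\widetilde{R}$ and paying the small extra price for the rank-$k$ truncation. The starting point is the telescoping identity
\[
A - \widetilde{C}_k\widetilde{C}_k^\dagger\widetilde{A}\widetilde{R}_k^\dagger\widetilde{R}_k = -E + (I - \widetilde{C}_k\widetilde{C}_k^\dagger)\widetilde{A} + \widetilde{C}_k\widetilde{C}_k^\dagger\widetilde{A}(I - \widetilde{R}_k^\dagger\widetilde{R}_k),
\]
obtained by adding and subtracting $\widetilde{A}$ and $\widetilde{C}_k\widetilde{C}_k^\dagger\widetilde{A}$. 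Since $\widetilde{C}_k\widetilde{C}_k^\dagger$ and $\widetilde{R}_k^\dagger\widetilde{R}_k$ are orthogonal projections with spectral norm at most one, the inequality $\|AB\|\leq\|A\|_2\|B\|$ (and its right-handed analogue) collapses them; replacing $\widetilde{A}=A+E$ in the last two terms then produces the skeleton estimate
\[
\|A - \widetilde{C}_k\widetilde{C}_k^\dagger\widetilde{A}\widetilde{R}_k^\dagger\widetilde{R}_k\| \leq 3\|E\| + \|(I-\widetilde{C}_k\widetilde{C}_k^\dagger)A\| + \|A(I-\widetilde{R}_k^\dagger\widetilde{R}_k)\|.
\]

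Next I would bound each projection-error term separately. For the column term, the assumption $\rank(C)=\rank(A)=k$ forces $\mathcal{R}(C)=\mathcal{R}(A)$, so $A=CC^\dagger A$; submultiplicativity then yields
\[
\|(I-\widetilde{C}_k\widetilde{C}_k^\dagger)A\| \leq \|(I-\widetilde{C}_k\widetilde{C}_k^\dagger)C\| \cdot \|C^\dagger A\|,
\]
whose second factor is already bounded by $\|V_{k,J}^\dagger\|$ via Proposition~\ref{PROP:NormTerms}. The first factor simplifies via $(I-\widetilde{C}_k\widetilde{C}_k^\dagger)\widetilde{C}_k=0$ to $\|(I-\widetilde{C}_k\widetilde{C}_k^\dagger)(C-\widetilde{C}_k)\|\leq\|C-\widetilde{C}_k\|$, using that an orthogonal projection is a contraction. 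The row-projection term is handled symmetrically via $A=AR^\dagger R$, reducing to $\|R-\widetilde{R}_k\|$ paired with $\|AR^\dagger\|\leq\|W_{k,I}^\dagger\|$.

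The pivotal step, and the only new ingredient compared to Theorem~\ref{THM:ErrFApp}, is the estimate $\|C-\widetilde{C}_k\|\leq 2\|E_J\|$ (and analogously $\|R-\widetilde{R}_k\|\leq 2\|E_I\|$). The triangle inequality $\|C-\widetilde{C}_k\|\leq \|C-\widetilde{C}\|+\|\widetilde{C}-\widetilde{C}_k\|$ handles the first summand at once, and for the second I would invoke Mirsky's theorem: in every unitarily invariant norm, $\widetilde{C}_k$ is a best rank-$k$ approximation to $\widetilde{C}$, so, $C$ being an admissible rank-$k$ competitor, $\|\widetilde{C}-\widetilde{C}_k\|\leq\|\widetilde{C}-C\|=\|E_J\|$. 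The only subtlety is verifying that Mirsky's optimality applies to the entire class of submultiplicative, unitarily invariant, normalized, uniformly generated norms used throughout the paper, which it does. Collecting everything gives the first displayed bound in the theorem, and the second line follows immediately from $\|E_I\|,\|E_J\|\leq\|E\|$.
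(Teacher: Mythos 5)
Your argument is correct and takes essentially the same route as the paper: the paper likewise telescopes as in Theorem \ref{THM:ErrFApp} and reduces everything to $\|(I-\widetilde{C}_k\widetilde{C}_k^\dagger)C\|\le 2\|E_J\|$ and its row analogue (Lemma \ref{LEM:CkCkA}), obtaining the factor $2\|E_J\|$ via a Drineas--Ipsen projection inequality plus Weyl's/Mirsky's singular-value perturbation bound, which is equivalent to your route of subtracting $\widetilde{C}_k$, using that the projection is a contraction, and invoking Mirsky's best rank-$k$ optimality (valid in every unitarily invariant norm, as you note). One small remark: your pairing yields $\|E_J\|\|V_{k,J}^\dagger\|+\|E_I\|\|W_{k,I}^\dagger\|$ in the first display, which is what Lemma \ref{LEM:CkCkA} together with Proposition \ref{PROP:NormTerms} actually gives; the indices in the theorem's first displayed bound appear to be transposed, though the final ``Hence'' estimate is unaffected since $\|E_I\|,\|E_J\|\le\|E\|$.
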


{\color{black}The CUR approximation of the form $A\approx \widetilde{C}_k\widetilde{C}_k^\dagger \widetilde{A}\widetilde{R}_k^\dagger \widetilde{R}_k$ appears to be novel, although it is completely natural to consider.  The estimates here are modestly worse than in the case where no rank truncation is done (Theorem \ref{THM:ErrFApp}), but still contain the same terms on the right-hand side.  We suggest that this method ought to be explored further in the future as evidenced by the numerical experiments in Section \ref{SEC:Experiments}. 

One of the common themes in utilizing CUR approximations is that while in the low-rank decomposition case (as in Theorem \ref{THM:Characterization}), one has $A=CU^\dagger R = CC^\dagger AR^\dagger R$, there is a tradeoff between using these in practice.  It is computationally simpler to compute $U^\dagger$, but as a low-rank approximation, $A\approx CU^\dagger R$ often exhibits poor performance.  On the other hand, $A\approx CC^\dagger AR^\dagger R$ typically yields a very good approximation (recall Proposition \ref{PROP:CAR}) but at the cost of being more costly to compute.  The approximation scheme proposed in Theorem \ref{THM:CkARk} gives an alternative which has good approximation power while having mildly higher complexity given that one must compute the truncated SVD of both $C$ and $R$.
}

\section{Refined Estimates: The Maximal Volume Case}\label{SEC:MaxVol}

One drawback of the above estimates is that some of the right-hand sides maintain dependencies on the choice of the submatrix $U$.  If one assumes that maximal volume submatrices of the left and right singular values are chosen, then one can use estimates from \cite{Osinsky2018} to give bounds on the corresponding spectral norms.  Recall that the volume of a matrix $B\in\K^{m\times n}$ is $\prod_{i=1}^{\min\{m,n\}}\sigma_i(B)$.  While finding the maximal volume submatrix of a given matrix is NP--hard, there are good approximation algorithms available, e.g. \cite{goreinov2010find, mikhalev2018rectangular,osinsky2018rectangular}.

\subsection{Properties of Maximal Volume Submatrices}

\begin{proposition}\label{PROP:MaxVol}
Suppose that $W_{k,I}$ and $V_{k,J}$ are the submatrices of $W_k$ and $V_k$ such that $W_{k,I}$ has maximal volume among all $|I|\times k $ submatrices of $W_k$ and $V_{k,J}$ is of maximal volume among all $|J|\times k$ submatrices of $V_k$. Then 

\begin{equation}\label{EQ:MaxVol}
\|W_{k,I}^\dagger\|_2 \leq \sqrt{1+\frac{k(m-|I|)}{|I|-k+1}},\quad\|V_{k,J}^\dagger\|_2\leq \sqrt{1+\frac{k(n-|J|)}{|J|-k+1}}.
\end{equation}
Moreover, if $\rank(U)=\rank(A)$, then 
\begin{equation}\label{EQ:MaxVol2} \|U^\dagger\|_2\leq \sqrt{1+\frac{k(m-|I|)}{|I|-k+1}} \sqrt{1+\frac{k(n-|J|)}{|J|-k+1}}\|A^\dagger\|_2.\end{equation}
\end{proposition}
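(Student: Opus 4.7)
The plan is to dispatch the two parts separately, handling (\ref{EQ:MaxVol}) by invoking an existing rectangular maximal-volume estimate and then deriving (\ref{EQ:MaxVol2}) as a short algebraic consequence.

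For (\ref{EQ:MaxVol}), since $W_k$ has orthonormal columns all of its singular values equal $1$, so $\|W_{k,I}^\dagger\|_2 = 1/\sigma_k(W_{k,I})$ and the task reduces to a lower bound on $\sigma_k(W_{k,I})$ under the maximal-volume selection of $I$. The standard approach, originating in the swap arguments of Goreinov--Tyrtyshnikov for the square case $|I|=k$ and extended to rectangular submatrices by Mikhalev--Oseledets and Osinsky, runs as follows: if $W_{k,I}$ has maximal volume among all $|I|\times k$ row-submatrices of $W_k$, then for each row $w_j^\top$ with $j\notin I$ the vector $w_j^\top W_{k,I}^\dagger$ obeys pointwise inequalities coming from the fact that any row-exchange or greedy row-addition cannot increase the volume; summing these entrywise bounds and combining with $W_k^*W_k = I_k$ yields the claimed estimate. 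Rather than reproduce this combinatorial argument, I would cite \cite{Osinsky2018} directly. The bound for $V_{k,J}^\dagger$ is identical upon swapping the roles of $W_k$ with $V_k$ and $(m,I)$ with $(n,J)$.

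For (\ref{EQ:MaxVol2}), the key observation is that under the standing hypothesis $\rank(U)=k$ the compact SVD $A = W_k\Sigma_k V_k^*$ restricts on the block $I\times J$ to
\[ U = A(I,J) = W_{k,I}\,\Sigma_k\,V_{k,J}^*. \]
Since $\Sigma_k$ is invertible and $\rank(U)=k$, both $W_{k,I}$ and $V_{k,J}$ must have rank $k$, so $W_{k,I}$ has linearly independent columns and $\Sigma_k V_{k,J}^*$ has linearly independent rows. The reverse-order identity $(XY)^\dagger = Y^\dagger X^\dagger$ applies under exactly this hypothesis, so invoking it twice in succession yields
\[ U^\dagger = (V_{k,J}^*)^\dagger\,\Sigma_k^{-1}\,W_{k,I}^\dagger. \]
Submultiplicativity of the spectral norm, together with $\|(V_{k,J}^*)^\dagger\|_2 = \|V_{k,J}^\dagger\|_2$ and $\|\Sigma_k^{-1}\|_2 = \sigma_k(A)^{-1} = \|A^\dagger\|_2$, then gives $\|U^\dagger\|_2 \leq \|W_{k,I}^\dagger\|_2 \|V_{k,J}^\dagger\|_2 \|A^\dagger\|_2$, and substituting the bounds from (\ref{EQ:MaxVol}) finishes the proof.

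The only genuine obstacle is (\ref{EQ:MaxVol}): the constants $\sqrt{1+k(m-|I|)/(|I|-k+1)}$ are sharp and the combinatorial exchange/greedy argument underlying them is technical, which is precisely why it is cleanest to import the estimate wholesale from \cite{Osinsky2018} and reserve our own effort for the structural reduction in (\ref{EQ:MaxVol2}).
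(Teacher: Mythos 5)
Your proposal is correct and matches the paper's treatment: the paper likewise imports \eqref{EQ:MaxVol} directly from \cite{Osinsky2018} and obtains \eqref{EQ:MaxVol2} from the identity $U^\dagger = (V_{k,J}^*)^\dagger\Sigma_k^{-1}W_{k,I}^\dagger$ (equivalently $U^\dagger=C^\dagger AR^\dagger$, established via the same full-column-rank/full-row-rank reverse-order argument in its appendix) together with submultiplicativity of $\|\cdot\|_2$. Your justification of the reverse-order law under $\rank(U)=k$ is exactly the hypothesis the paper uses, so there is nothing to add.
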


Note that \eqref{EQ:MaxVol} appears in \cite{Osinsky2018}, and the moreover statement follows by Proposition \ref{PROP:CAR} and the assumption that $\rank(U)=\rank(A)$.  For ease of notation, since the upper bounds appearing in \eqref{EQ:MaxVol} are universal, we abbreviate the quantities there $t(k,m,|I|)$, and $t(k,n,|J|)$, respectively as in \cite{Osinsky2018}.  Regard also that Frobenius bounds are also provided in \cite{Osinsky2018}, where the upper bound is $\widetilde{t}(k,m,|I|) = \sqrt{k+\frac{k(m-|I|)}{|I|-k+1}}$.

To finish off our perturbation analysis, we will apply the conclusion of Proposition \ref{PROP:MaxVol} to the bounds on rank-enforcement CUR approximations discussed in the previous section.  We withhold the proofs as the following corollaries arise by simply applying \eqref{EQ:MaxVol} and \eqref{EQ:MaxVol2} to previous theorems.

As a preliminary remark, we note that choosing maximal volume submatrices of the singular vectors of $A$ automatically yields a valid CUR decomposition.
\begin{proposition}
Let $A\in\K^{m\times n}$ have rank $k$ and compact SVD $A=W_k\Sigma_kV_k^*$.  Suppose that $I\subset[m]$ and $J\subset[n]$ satisfy $|I|,|J|\geq k$ and $W_{k,I}$ and $V_{k,J}$ are the maximal volume submatrices of $W_k$ and $V_k$, respectively.  If $C=A(:,J)$, $U=A(I,J)$, and $R=A(:,J)$, then $A=CU^\dagger R$.
\end{proposition}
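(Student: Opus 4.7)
The plan is to reduce everything to Theorem \ref{THM:Characterization}: once we verify that $\rank(U)=k$, the equivalence (i)$\Leftrightarrow$(ii) in that theorem yields $A=CU^\dagger R$ immediately. So the entire task is to show that maximal-volume submatrix selection on $W_k$ and $V_k$ forces $\rank(U)=k$.

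First I would use the SVD to decompose $U=A(I,J)$. Since $A=W_k\Sigma_kV_k^*$, restricting rows to $I$ and columns to $J$ gives $U=W_{k,I}\Sigma_k V_{k,J}^*$, where $\Sigma_k$ is a $k\times k$ invertible diagonal matrix. Therefore $\rank(U)=k$ will follow provided both $W_{k,I}\in\K^{|I|\times k}$ and $V_{k,J}\in\K^{|J|\times k}$ have full column rank $k$.

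The key step is to show that maximality of volume guarantees full column rank. Here I would argue as follows for $W_{k,I}$ (the case of $V_{k,J}$ is symmetric): because $W_k$ has $k$ orthonormal columns, it has rank $k$, so there must exist some index set $I_0\subset[m]$ with $|I_0|=k$ such that the square submatrix $W_k(I_0,:)$ is nonsingular, i.e., has strictly positive volume $|\det W_k(I_0,:)|>0$. Any superset $I'\supset I_0$ with $|I'|=|I|\geq k$ then has $W_k(I',:)$ containing $k$ linearly independent rows, so $W_k(I',:)$ has full column rank and hence positive volume $\sqrt{\det(W_k(I',:)^*W_k(I',:))}>0$. Since $W_{k,I}$ is maximal volume among $|I|\times k$ submatrices of $W_k$, its volume is at least that of $W_k(I',:)$, hence strictly positive, so $W_{k,I}$ has full column rank $k$.

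The same reasoning applied to $V_k$ gives $\rank(V_{k,J})=k$. Combining with the factorization $U=W_{k,I}\Sigma_k V_{k,J}^*$ (a product of a full-column-rank, invertible, full-row-rank matrix) yields $\rank(U)=k=\rank(A)$, and Theorem \ref{THM:Characterization} then delivers $A=CU^\dagger R$. The main (and really only) obstacle is the rank-from-maximal-volume step above; once one notices that including a known-nonsingular $k\times k$ subblock inside $I$ forces positive volume for the larger $|I|\times k$ block, the argument is essentially mechanical.
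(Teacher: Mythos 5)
Your proof is correct and follows essentially the same route as the paper: both arguments hinge on the observation that some submatrix of the appropriate size has nonzero volume (because $W_k$, $V_k$ have rank $k$), so the maximal-volume submatrices $W_{k,I}$, $V_{k,J}$ must have positive volume and hence full column rank, after which Theorem \ref{THM:Characterization} finishes the job. The only cosmetic difference is that you verify condition (i) ($\rank(U)=k$, via the factorization $U=W_{k,I}\Sigma_k V_{k,J}^*$) while the paper verifies the equivalent condition (v) ($\rank(C)=\rank(R)=k$).
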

\begin{proof}
Notice that $\rank(W_k)=\rank(V_k)=k$.  There exist $I_1$, $J_1$ such that $\rank(W_{k,I_1})=k$ and $\rank(V_{k,J_1})=k$. Therefore, by the definition of the maximal volume submatrices, we must have $\rank(W_{k,I})=\rank(V_{k,J})=k$; indeed, recall that the volume of $W_{k,I} = \prod_{j=1}^k \sigma_j(W_{k,I})$, and this product is $0$ if $W_{k,I}$ has rank less than $k$ and hence cannot be of maximal volume since $W_{k,I_1}$ has nonzero volume. It follows that $\rank(C)=\rank(R)=k$, and hence $A=CU^\dagger R$ by Theorem \ref{THM:Characterization}.
\end{proof}

The following corollary arises from Theorems \ref{THM:PB} and \ref{THM:CkARk} and Remark \ref{REM:2norm}; {the condition on $\sigma_k(A)$ comes from combining \eqref{EQ:MaxVol2} with the condition on $\sigma_k(U)$ in Lemma \ref{LEM:PBTerms}.}
\begin{corollary}\label{COR:CURMaxVol}
Suppose that $A\in\K^{m\times n}$ has rank $k$ and compact  SVD $A=W_k\Sigma_kV_k^*$.  Suppose also that $W_{k,I}$ and $V_{k,J}$ are maximal volume submatrices of $W_k$ and $V_k$, respectively.  Then
\[\|A-\widetilde{C}_k\widetilde{C}_k^\dagger\widetilde{A}\widetilde{R}_k^\dagger\widetilde{R}_k\|\leq (2t(k,m,|I|)+2t(k,n,|J|)+3)\|E\|.\]
If additionally, $\sigma_k(A)\geq 4\mu t(k,m,|I|)t(k,n,|J|)\|E\|$, then 
\begin{multline*} \|A-\widetilde{C}\widetilde{U}_k^\dagger\widetilde{R}\|\leq  \left[\frac{1}{2\mu}+\left(1+\frac{1}{2\mu}\right)\left(t(k,m,|I|)+t(k,n,|J|)\right)\right.\\
\left.+\left(4+\frac{1}{2\mu}\right)t(k,m,|I|)t(k,n,|J|)\right]\|E\|. 
\end{multline*}
\end{corollary}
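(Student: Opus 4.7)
The plan is to treat the corollary as a direct consequence of Theorems \ref{THM:CkARk} and \ref{THM:PB} (specifically the simplified bound in Remark \ref{REM:CUkR}) by inserting the maximal-volume pseudoinverse estimates from Proposition \ref{PROP:MaxVol} and invoking Remark \ref{REM:2norm} to sharpen the right-hand side wherever a spectral-norm bound on $\|W_{k,I}^\dagger\|$ or $\|V_{k,J}^\dagger\|$ is available. Since the preceding text states the corollary is obtained by combining these ingredients, essentially no new analysis is required; the task is purely bookkeeping, and no step should present a real obstacle.

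For the first inequality, the starting point is Theorem \ref{THM:CkARk}, which yields
\[
\|A-\widetilde{C}_k\widetilde{C}_k^\dagger \widetilde{A}\widetilde{R}_k^\dagger \widetilde{R}_k\|\leq 2\|E\|\left(\|W_{k,I}^\dagger\|+\|V_{k,J}^\dagger\|+\tfrac{3}{2}\right).
\]
Following Remark \ref{REM:2norm}, I would replace the occurrences of $\|W_{k,I}^\dagger\|$ and $\|V_{k,J}^\dagger\|$ on the right-hand side by their spectral-norm counterparts $\|W_{k,I}^\dagger\|_2$ and $\|V_{k,J}^\dagger\|_2$, as these arise as $\|\cdot\|_2$-type factors in the underlying proof of Theorem \ref{THM:CkARk}. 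Inserting the bounds from \eqref{EQ:MaxVol}, namely $\|W_{k,I}^\dagger\|_2\leq t(k,m,|I|)$ and $\|V_{k,J}^\dagger\|_2\leq t(k,n,|J|)$, immediately yields the first claim.

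For the second inequality, I would start from the simplified form given in Remark \ref{REM:CUkR}, which applies under $\sigma_k(U)>4\mu\|E\|$ and reads
\[
\|A-\widetilde{C}\widetilde{U}_k^\dagger\widetilde{R}\|\leq \left(\left(1+\tfrac{1}{2\mu}\right)\left(\|W_{k,I}^\dagger\|+\|V_{k,J}^\dagger\|\right)+\left(4+\tfrac{1}{2\mu}\right)\|W_{k,I}^\dagger\|\|V_{k,J}^\dagger\|+\tfrac{1}{2\mu}\right)\|E\|.
\]
The hypothesis $\sigma_k(U)>4\mu\|E\|$ is not verifiable directly from the data $A$, so I would first convert it into a clean condition on $\sigma_k(A)$. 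Since $\|U^\dagger\|_2=1/\sigma_k(U)$ when $\rank(U)=k$, inequality \eqref{EQ:MaxVol2} gives
\[
\frac{1}{\sigma_k(U)}\leq \frac{t(k,m,|I|)\,t(k,n,|J|)}{\sigma_k(A)},
\]
so $\sigma_k(A)\geq 4\mu\, t(k,m,|I|)\,t(k,n,|J|)\|E\|$ implies $\sigma_k(U)\geq 4\mu\|E\|$, which is what Remark \ref{REM:CUkR} requires. Once that hypothesis is in place, I again invoke Remark \ref{REM:2norm} to pass to spectral-norm versions of $\|W_{k,I}^\dagger\|$ and $\|V_{k,J}^\dagger\|$ on the right-hand side, and substitute the maximal-volume bounds $t(k,m,|I|)$ and $t(k,n,|J|)$ from \eqref{EQ:MaxVol}. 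Collecting the coefficients produces the stated expression.

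Overall this is a routine substitution argument. The only mild subtlety worth pointing out in the write-up is the translation of the hypothesis on $\sigma_k(U)$ in Theorem \ref{THM:PB} into the stated hypothesis on $\sigma_k(A)$ via \eqref{EQ:MaxVol2}; flagging this explicitly for the reader is the one place where care is needed, but it requires no new estimate beyond what is already available in Proposition \ref{PROP:MaxVol}.
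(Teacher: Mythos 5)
Your proposal is correct and follows exactly the route the paper intends: the paper explicitly withholds the proof because the corollary "arises by simply applying \eqref{EQ:MaxVol} and \eqref{EQ:MaxVol2} to previous theorems," namely Theorem \ref{THM:CkARk} for the first bound, Remark \ref{REM:CUkR} (the simplified form of Theorem \ref{THM:PB}) for the second, Remark \ref{REM:2norm} to pass to spectral-norm factors, and \eqref{EQ:MaxVol2} to convert the hypothesis on $\sigma_k(U)$ into one on $\sigma_k(A)$. Your explicit flagging of that last conversion matches the paper's own parenthetical remark preceding the corollary.
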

\begin{corollary}\label{COR:CURMaxVol1}
Suppose $A\in\K^{n\times n}$ and $|I|=|J|$.   Suppose also that $W_{k,I}$ and $V_{k,J}$ are maximal volume submatrices as in Proposition \ref{PROP:MaxVol}.  Abbreviate $t:=t(k,n,|I|)$.  Then\[\|A-\widetilde{C}_k\widetilde{C}_k^\dagger\widetilde{A}\widetilde{R}_k^\dagger\widetilde{R}_k\|\leq (4t+3)\|E\|.\]
If additionally, $\sigma_k(A)\geq 4\mu t^2\|E\|$, then 
\[ \|A-\widetilde{C}\widetilde{U}_k^\dagger\widetilde{R}\|\leq  \left(\frac{1}{2\mu}+\left(2+\frac{1}{\mu}\right)t+\left(4+\frac{1}{2\mu}\right)t^2\right)\|E\|.\]
\end{corollary}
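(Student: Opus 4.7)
The plan is to derive Corollary \ref{COR:CURMaxVol1} as a direct specialization of Corollary \ref{COR:CURMaxVol} to the square, balanced-index case. The hypotheses $A\in\K^{n\times n}$ and $|I|=|J|$ force the two universal constants from Proposition \ref{PROP:MaxVol} to coincide, since $t(k,m,|I|)$ and $t(k,n,|J|)$ become identical when $m=n$ and $|I|=|J|$; I abbreviate their common value by $t$. So the entire task reduces to substituting $t(k,m,|I|)=t(k,n,|J|)=t$ into the two bounds of Corollary \ref{COR:CURMaxVol} and simplifying.

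For the projection-based bound, substituting into $2t(k,m,|I|)+2t(k,n,|J|)+3$ yields $4t+3$, which proves the first inequality. For the non-projection based bound, I would first verify that the hypothesis $\sigma_k(A)\geq 4\mu t^2\|E\|$ is precisely the specialization of the hypothesis $\sigma_k(A)\geq 4\mu\,t(k,m,|I|)\,t(k,n,|J|)\,\|E\|$ of Corollary \ref{COR:CURMaxVol}. Then, using $t(k,m,|I|)+t(k,n,|J|)=2t$ and $t(k,m,|I|)\,t(k,n,|J|)=t^2$, the bracketed coefficient in Corollary \ref{COR:CURMaxVol} becomes
\[ \frac{1}{2\mu} + \left(1+\frac{1}{2\mu}\right)(2t) + \left(4+\frac{1}{2\mu}\right)t^2, \]
which is exactly $\frac{1}{2\mu} + \left(2+\frac{1}{\mu}\right)t + \left(4+\frac{1}{2\mu}\right)t^2$, as claimed.

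Since the corollary is purely an algebraic repackaging of Corollary \ref{COR:CURMaxVol} under the natural symmetric hypothesis, there is no real obstacle; the substantive work (invoking Proposition \ref{PROP:MaxVol} to control $\|W_{k,I}^\dagger\|_2$, $\|V_{k,J}^\dagger\|_2$, and $\|U^\dagger\|_2$, and combining with Theorems \ref{THM:PB} and \ref{THM:CkARk}) has already been carried out in the preceding corollary. The only point to double-check is that both hypotheses and conclusions collapse consistently when $t(k,m,|I|)$ and $t(k,n,|J|)$ are merged, which follows directly from the definition of $t(k,n,|I|)$ in \eqref{EQ:MaxVol}.
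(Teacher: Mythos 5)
Your proposal is correct and matches the paper's intent exactly: the paper explicitly withholds the proof of this corollary as a direct application of \eqref{EQ:MaxVol} and \eqref{EQ:MaxVol2} to the previous results, and your specialization of Corollary \ref{COR:CURMaxVol} to $m=n$, $|I|=|J|$ (so that $t(k,m,|I|)=t(k,n,|J|)=t$) with the algebraic simplification $(1+\tfrac{1}{2\mu})(2t)=(2+\tfrac{1}{\mu})t$ is precisely that argument.
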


\subsection{Comparison with Previous Results}\label{SEC:Comparison}

Osinsky and Zamarashkin \cite{Osinsky2018} provides several estimates of CUR approximations in which they assume that maximal volume submatrices are chosen.

Here are some of the theorems therein stated for comparison.  For simplicity of the statements, we focus on the case that $A$ is square, and exactly $k$ columns and rows are selected (i.e., $|I|=|J|=k$) and denote the factor $t$ as in Corollary \ref{COR:CURMaxVol1}.  The first result we highlight is the following.
\begin{theorem}[{\cite[Theorem 2]{Osinsky2018}}]\label{THM:Osinsky1}
There exist $I,J\subset[n]$ and $X$ such that
\[\|A-\widetilde{C}X\widetilde{R}\|_2 \leq 4t\|E\|_2. \]

\end{theorem}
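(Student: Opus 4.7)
The plan is to exhibit explicit choices of $I$, $J$, and $X$ and bound the error using the max-volume structure of the singular vectors. First, I would let $A = W_k \Sigma_k V_k^*$ be the compact SVD of $A$ and choose $I, J \subset [n]$ with $|I| = |J| = k$ so that $W_{k,I}$ and $V_{k,J}$ are maximal-volume $k \times k$ submatrices of $W_k$ and $V_k$, respectively. By Proposition \ref{PROP:MaxVol} both submatrices are invertible with $\|W_{k,I}^\dagger\|_2 \leq t$ and $\|V_{k,J}^\dagger\|_2 \leq t$, and consequently $\rank(C) = \rank(R) = k$ with $\mathcal{R}(C) = \mathcal{R}(A)$ and $\mathcal{R}(R^*) = \mathcal{R}(A^*)$. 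For the middle matrix, I would take the projection-style choice $X := \widetilde{C}^\dagger A \widetilde{R}^\dagger$, so that $\widetilde{C} X \widetilde{R} = \widetilde{C}\widetilde{C}^\dagger A \widetilde{R}^\dagger \widetilde{R}$.

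Next, the standard add-and-subtract
\[
A - \widetilde{C}\widetilde{C}^\dagger A \widetilde{R}^\dagger \widetilde{R} = (I - \widetilde{C}\widetilde{C}^\dagger) A + \widetilde{C}\widetilde{C}^\dagger A (I - \widetilde{R}^\dagger \widetilde{R})
\]
reduces the task to bounding the two residuals. For the first, write $A = CB$ with $B := C^\dagger A$; since $C = W_k(\Sigma_k V_{k,J}^*)$ with $\Sigma_k V_{k,J}^*$ square invertible, the pseudoinverse-of-product rule yields $B = V_{k,J}^{-*} V_k^*$, whence $\|B\|_2 \leq \|V_{k,J}^\dagger\|_2 \leq t$. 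Combined with $(I - \widetilde{C}\widetilde{C}^\dagger)\widetilde{C} = 0$ and $C - \widetilde{C} = -E_J$, one obtains $\|(I - \widetilde{C}\widetilde{C}^\dagger) A\|_2 \leq \|E_J\|_2 \cdot t \leq t\|E\|_2$. A symmetric argument with $A = B' R$, $B' := AR^\dagger = W_k W_{k,I}^\dagger$ of norm at most $t$, gives $\|A(I - \widetilde{R}^\dagger \widetilde{R})\|_2 \leq t\|E\|_2$. Summing yields $\|A - \widetilde{C} X \widetilde{R}\|_2 \leq 2t\|E\|_2$, in fact sharper than the claimed $4t\|E\|_2$.

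The main subtlety is the identity $C^\dagger A = V_{k,J}^{-*} V_k^*$ (and its transpose), which is not immediate since $C$ is not in compact-SVD form; it follows from rewriting $C$ as $W_k(\Sigma_k V_{k,J}^*)$ and invoking the pseudoinverse-of-product rule for matrices that factor as orthonormal columns times a square invertible factor. A secondary point is that the $X$ above depends on the unknown matrix $A$; if one wants a purely data-driven $X$, the natural substitute is $X = \widetilde{C}^\dagger \widetilde{A} \widetilde{R}^\dagger$, and Theorem \ref{THM:ErrFApp} combined with Proposition \ref{PROP:MaxVol} then yields $\|A - \widetilde{C}\widetilde{C}^\dagger \widetilde{A} \widetilde{R}^\dagger \widetilde{R}\|_2 \leq (2t + 3)\|E\|_2$, which still has the correct linear-in-$t$ scaling.
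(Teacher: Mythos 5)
Your argument is correct and in fact sharpens the stated bound to $2t\|E\|_2$, but it reaches the result by a genuinely different route than the cited source. The paper does not reprove Theorem \ref{THM:Osinsky1}; it quotes it from Osinsky--Zamarashkin, whose construction takes the middle matrix $X=[A(I,J)]_\tau^\dagger$ with singular values thresholded at $\tau=\|E\|/t$, and whose analysis must control the interaction of that thresholded pseudoinverse with the perturbations of $C$, $U$, and $R$ (the style of argument the paper itself carries out in Theorem \ref{THM:CURTilde}). You instead take the Frobenius-optimal oracle choice $X=\widetilde{C}^\dagger A\widetilde{R}^\dagger$ (cf.\ Proposition \ref{PROP:CAR}) and run the projection argument of Lemma \ref{LEM:CCA1} and Theorem \ref{THM:ErrFApp} with $A$ rather than $\widetilde{A}$ in the middle slot; this removes the three additive $\|E\|$ terms of Theorem \ref{THM:ErrFApp}, and after $\|C^\dagger A\|_2=\|V_{k,J}^\dagger\|_2\le t$ and $\|AR^\dagger\|_2=\|W_{k,I}^\dagger\|_2\le t$ (Propositions \ref{PROP:NormTerms} and \ref{PROP:MaxVol}) you land at $2t\|E\|_2\le 4t\|E\|_2$. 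The trade-offs: your proof is shorter, avoids any thresholding parameter, and gives a better constant; on the other hand Osinsky's oracle needs only the block $A(I,J)$ and the scalar $\|E\|$, whereas yours needs all of $A$ --- harmless for this purely existential statement, and you rightly note that the computable substitute $X=\widetilde{C}^\dagger\widetilde{A}\widetilde{R}^\dagger$ costs only an extra $3\|E\|$, which is precisely the comparison the paper draws immediately after stating the theorem. One small attribution point: the invertibility of $W_{k,I}$ and $V_{k,J}$ comes from the maximal-volume property itself (as in the unlabeled proposition following Proposition \ref{PROP:MaxVol}), not from the norm bounds of Proposition \ref{PROP:MaxVol}, since a finite pseudoinverse norm alone does not preclude rank deficiency.
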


This error bound is relatively good; however, the authors choose $X=[A(I,J)]_{\tau}^\dagger$ with $\tau=\frac{\|E\|}{t}$.  This choice is impractical for real matrices as one does not have access to $A(I,J)$ or $\|E\|$ in practice. For comparison, Theorem \ref{THM:ErrFApp} yields an upper bound of $(3+2t)\|E\|$. It is easily checked that when $t>3/2$, Theorem \ref{THM:ErrFApp} gives a better bound than Theorem \ref{THM:Osinsky1}, but nonetheless this approximation still suffers from needing to know the SVD of $A$.

The second result we highlight is the following.
\begin{theorem}[{\cite[Theorem 3]{Osinsky2018}}]
If $\|E\|_2\leq\eps$, and $I$ and $J$ yield the maximal volume submatrices of $W_{k}$ and $V_k$, respectively, then
\[\|\widetilde{A}-\widetilde{C}[\widetilde{U}]_\eps^\dagger\widetilde{R}\|_2 \leq (2 + 4t + 5t^2)\eps. \]
\end{theorem}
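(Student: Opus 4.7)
The plan is to derive the stated bound as a direct consequence of Theorem \ref{THM:CURTilde} applied with $\|\cdot\|=\|\cdot\|_2$ and $\tau=\eps$, combined with the maximal-volume submatrix estimates from Proposition \ref{PROP:MaxVol}. First, I would apply a triangle inequality to shift from $\widetilde{A}$ to the clean low-rank matrix $A$:
\[ \|\widetilde{A}-\widetilde{C}[\widetilde{U}]_\eps^\dagger\widetilde{R}\|_2 \leq \|E\|_2 + \|A-\widetilde{C}[\widetilde{U}]_\eps^\dagger\widetilde{R}\|_2 \leq \eps + \|A-\widetilde{C}[\widetilde{U}]_\eps^\dagger\widetilde{R}\|_2, \]
reducing the task to showing the last term is at most $(1+4t+5t^2)\eps$.

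Next I would specialize each factor appearing on the right-hand side of Theorem \ref{THM:CURTilde}. Submatrix extraction is contractive in the spectral norm, so $\|E_I\|_2$, $\|E_J\|_2$, $\|E_{I,J}\|_2$ are all at most $\|E\|_2\leq\eps$. With $|I|=|J|=k$ and maximal-volume rows chosen from $W_k$ and $V_k$, Proposition \ref{PROP:MaxVol} yields $\|W_{k,I}^\dagger\|_2,\|V_{k,J}^\dagger\|_2\leq t$. The definition of hard thresholding gives $\|[\widetilde{U}]_\eps^\dagger\|_2\leq\eps^{-1}$. The only factor that requires an extra sub-argument is $\|[\widetilde{U}]_\eps-U\|_2$: inserting $\widetilde{U}$ and using the triangle inequality, the thresholding step discards singular values below $\eps$ so $\|[\widetilde{U}]_\eps-\widetilde{U}\|_2\leq\eps$, and $\|\widetilde{U}-U\|_2=\|E_{I,J}\|_2\leq\eps$, giving $\|[\widetilde{U}]_\eps-U\|_2\leq 2\eps$.

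Substituting these bounds into Theorem \ref{THM:CURTilde}, the first line contributes $2t\eps + t^2(2\eps+2\eps)=(2t+4t^2)\eps$, while the second line contributes $\eps^{-1}\bigl[(2t\eps+t^2\eps)\eps+\eps^2\bigr]=(1+2t+t^2)\eps$. Summing gives $(1+4t+5t^2)\eps$, and combining with the outer triangle inequality produces the advertised constant $2+4t+5t^2$. The main obstacle is purely bookkeeping: the cancellations that produce the clean integer coefficients $4$ and $5$ only become visible once the $\tau=\eps$ choice is used uniformly to offset the $\eps^{-1}$ factor from $\|[\widetilde{U}]_\eps^\dagger\|_2$ against the quadratic-in-$\eps$ numerator. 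No new ingredient beyond Theorem \ref{THM:CURTilde}, Proposition \ref{PROP:MaxVol}, and the two-term estimate on $\|[\widetilde{U}]_\eps-U\|_2$ is required.
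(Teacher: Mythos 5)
Your derivation is correct and follows exactly the route the paper itself indicates (but does not write out): it cites the bound to Osinsky--Zamarashkin and remarks in Section \ref{SEC:Comparison} that Theorem \ref{THM:CURTilde} with $\tau=\eps$, combined with Proposition \ref{PROP:MaxVol}, the estimates $\|[\widetilde{U}]_\eps^\dagger\|_2\leq\eps^{-1}$ and $\|[\widetilde{U}]_\eps-U\|_2\leq\tau+\|E_{I,J}\|_2$, recovers the same right-hand side. Your bookkeeping, including the extra triangle-inequality step to pass from $A$ to $\widetilde{A}$ on the left-hand side, checks out and yields precisely $(2+4t+5t^2)\eps$.
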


Our estimate in Theorem \ref{THM:CURTilde} gives the same error bound  as  this on the right-hand side if we choose $\tau=\eps$. However, our estimates obtained by directly enforcing the rank are novel and not directly considered in other works. Indeed, in certain cases, Remark \ref{REM:CUkR} gives a much better bound than that above: 
\begin{corollary}
    If $\|E\|\leq\eps$, $I$ and $J$ yield maximal volume submatrices of $W_k$ and $V_k$, respectively, and in addition $\sigma_k(U)>4\mu\eps$, then
    \[ \|A-\widetilde{C}\widetilde{U}_k^\dagger\widetilde{R}\|\leq\left(\frac12+3t+\frac92 t^2\right)\eps.\]
\end{corollary}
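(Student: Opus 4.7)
The plan is to deduce this corollary directly from the preceding Corollary \ref{COR:CURMaxVol1} by specializing both the noise size and the perturbation constant. First, I would observe that the hypotheses line up: since $\|E\|\le\eps$, the assumption $\sigma_k(U)>4\mu\eps$ gives $\sigma_k(U)>4\mu\|E\|$, which is precisely the condition behind the cleaner rank-enforced bound in Remark \ref{REM:CUkR}. That bound was already aggregated into Corollary \ref{COR:CURMaxVol1} by combining the maximal volume estimates $\|W_{k,I}^\dagger\|_2,\|V_{k,J}^\dagger\|_2\le t$ from Proposition \ref{PROP:MaxVol} with the sharper inequality $\|AB\|\le\|A\|_2\|B\|$ from Remark \ref{REM:2norm} (used to convert the generic norms of the pseudoinverses into their spectral-norm counterparts).

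Applying Corollary \ref{COR:CURMaxVol1} and replacing $\|E\|$ by its upper bound $\eps$ yields
\[ \|A-\widetilde{C}\widetilde{U}_k^\dagger\widetilde{R}\|\le \left(\frac{1}{2\mu}+\Bigl(2+\frac{1}{\mu}\Bigr)t+\Bigl(4+\frac{1}{2\mu}\Bigr)t^2\right)\eps. \]
To finish, I would specialize the Stewart constant $\mu$ from Theorem \ref{THM:Stewart}. Since the comparison results in this subsection (the cited theorems of Osinsky--Zamarashkin) are stated for the spectral norm, the relevant value is $\mu=1$. Substituting $\mu=1$ collapses the three coefficients to $\tfrac12$, $3$, and $\tfrac92$ respectively, which is exactly the claimed bound $(\tfrac12+3t+\tfrac92 t^2)\eps$.

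There is no genuine obstacle here; the entire argument is an arithmetic simplification once Corollary \ref{COR:CURMaxVol1} is available. The only items that require care are (i) verifying that $\sigma_k(U)>4\mu\eps$ combined with $\|E\|\le\eps$ truly implies the hypothesis $\sigma_k(U)>4\mu\|E\|$ behind Remark \ref{REM:CUkR}, and (ii) tracking that the factor $t^2$ arises exclusively from the product $\|W_{k,I}^\dagger\|_2\|V_{k,J}^\dagger\|_2$ appearing in the $(4+\tfrac{1}{2\mu})$-coefficient, while each linear $t$ comes from a single maximal-volume bound on one of the two pseudoinverses.
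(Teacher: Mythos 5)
Your overall route is exactly the paper's: the corollary is just Corollary \ref{COR:CURMaxVol1} (equivalently, Remark \ref{REM:CUkR} combined with the maximal volume bounds of Proposition \ref{PROP:MaxVol}), with $\|E\|$ replaced by $\eps$, and the hypothesis $\sigma_k(U)>4\mu\eps\geq 4\mu\|E\|$ does supply the condition needed there. The one point where your justification goes wrong is the claim that ``the relevant value is $\mu=1$'' because the comparison is in the spectral norm: the paper states explicitly (after Theorem \ref{THM:Stewart}) that $\mu=\tfrac{1+\sqrt 5}{2}$ for the spectral norm, not $1$, and moreover the corollary is stated for an arbitrary admissible norm, so you should not specialize to the spectral norm at all. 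The correct reason the coefficients collapse to $\tfrac12$, $3$, $\tfrac92$ is monotonicity: $\mu\geq 1$ for every norm in the class considered, and each of $\tfrac{1}{2\mu}$, $2+\tfrac{1}{\mu}$, $4+\tfrac{1}{2\mu}$ is decreasing in $\mu$, so evaluating at $\mu=1$ gives a norm-independent upper bound. Your arithmetic and final inequality are therefore correct, but for a different (and here the only valid) reason than the one you give; with that substitution the argument is complete.
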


Note that bounds of a different flavor have recently been provided by Mikhalev and Oseledets \cite{mikhalev2018rectangular}.  Additionally, independent, concurrent work which has an analysis of $\widetilde{A}-\widetilde{C}\widetilde{U}^\dagger\widetilde{R}$ in a similar vein to that of Section \ref{SEC:A-CUR} was done by Pan et al.~\cite{pan2019cur}, though their subsequent focus is more algorithmic than the present work.   





\section{Useful Properties of CUR Decompositions}\label{SEC:CURLemmas}

Here we collect some useful properties of the submatrices involved in exact CUR decompositions.  The proofs of Lemma \ref{LEM:Projections} and Propositions \ref{PROP:U=RAC} and \ref{PROP:Udagger} may be found in \cite{HH2019}.  

\begin{lemma}\label{LEM:Projections}
Suppose that $A, C, U,$ and $R$ are as in Theorem \ref{THM:Characterization}, with $\rank(A)=\rank(U)$.  Then $\mathcal{N}(C)=\mathcal{N}(U)$, $\mathcal{N}(R^*)=\mathcal{N}(U^*)$, $\mathcal{N}(A)=\mathcal{N}(R)$, and $\mathcal{N}(A^*)=\mathcal{N}(C^*)$.  Moreover,
\[ C^\dagger C = U^\dagger U,\qquad  RR^\dagger=UU^\dagger,\]
\[ AA^\dagger = CC^\dagger, \quad \text{and} \quad A^\dagger A = R^\dagger R.\]
\end{lemma}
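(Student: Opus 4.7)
The plan is to prove the four nullspace equalities first and then to deduce the four pseudoinverse identities from them using the well-known fact that, for any matrix $B$, the product $B^\dagger B$ is the orthogonal projection onto $\mathcal{N}(B)^\perp$ and $BB^\dagger$ is the orthogonal projection onto $\mathcal{N}(B^*)^\perp$. No machinery beyond rank-nullity and the equivalences provided by Theorem \ref{THM:Characterization} will be needed.

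For the nullspace equalities, the key observation is that the hypothesis $\rank(U)=\rank(A)=k$ combined with Theorem \ref{THM:Characterization}(v) forces $\rank(C)=\rank(R)=k$ as well. Each of the four pairs $(U,C)$, $(U^*,R^*)$, $(R,A)$, $(C^*,A^*)$ can be compared via a submatrix relationship: $U=A(I,J)$ is a row submatrix of $C=A(:,J)$, $U^*$ is a row submatrix of $R^*$, $R=A(I,:)$ is a row submatrix of $A$, and $C^*$ is a row submatrix of $A^*$. Each such submatrix relationship yields an automatic inclusion of nullspaces; for instance, $Cx=0$ implies $Ux=0$, so $\mathcal{N}(C)\subseteq\mathcal{N}(U)$. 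In each case, both matrices in the pair have the same number of columns and the same rank $k$, so rank-nullity forces $\dim\mathcal{N}(C)=\dim\mathcal{N}(U)$ (and analogously for the other three pairs), and the inclusion becomes equality.

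With these four nullspace equalities in hand, the projection identities follow immediately. Since $\mathcal{N}(C)=\mathcal{N}(U)$, the orthogonal projections $C^\dagger C$ and $U^\dagger U$ onto $\mathcal{N}(C)^\perp$ and $\mathcal{N}(U)^\perp$ coincide; similarly $\mathcal{N}(R^*)=\mathcal{N}(U^*)$ yields $RR^\dagger=UU^\dagger$, the identity $\mathcal{N}(A^*)=\mathcal{N}(C^*)$ yields $AA^\dagger=CC^\dagger$, and $\mathcal{N}(A)=\mathcal{N}(R)$ yields $A^\dagger A=R^\dagger R$. No substantive obstacle arises in this argument; the only care needed is to verify in each submatrix comparison that one is pairing matrices with matching column count, so that the rank-nullity bookkeeping produces an equality of nullspace dimensions rather than merely an inclusion.
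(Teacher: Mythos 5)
Your proof is correct and complete: the four nullspace identities follow exactly as you say from the submatrix inclusions plus rank--nullity (using Theorem \ref{THM:Characterization}(v) to equate all the ranks), and the projection identities then follow from the characterization of $B^\dagger B$ and $BB^\dagger$ as orthogonal projections onto $\mathcal{N}(B)^\perp$ and $\mathcal{N}(B^*)^\perp$. The paper itself does not reprove this lemma but defers to \cite{HH2019}, where the argument is essentially the same as yours, so there is no substantive difference in approach.
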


\begin{proposition}\label{PROP:U=RAC}
	Suppose that $A$, $C$, $U$, and $R$  are as in Theorem \ref{THM:Characterization} (but without any assumption on the rank of $U$). Then \[U=RA^{\dagger}C.\]
\end{proposition}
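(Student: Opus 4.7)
The plan is to express $C$, $R$, and $U$ as products of $A$ with coordinate selection matrices and then invoke the defining identity $AA^\dagger A = A$ of the Moore--Penrose pseudoinverse. This short algebraic approach avoids any need for rank assumptions on $U$, which matches the hypothesis of the proposition.

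First I would fix the row-selection matrix $S_I \in \K^{|I|\times m}$ whose rows are the standard basis vectors $e_i^*$ for $i\in I$, and similarly a column-selection matrix $S_J \in \K^{|J|\times n}$. With this notation, $R = S_I A$, $C = A S_J^*$, and $U = S_I A S_J^*$, directly from the definitions $R=A(I,:)$, $C=A(:,J)$, $U=A(I,J)$.

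Next I would compute
\begin{equation*}
R A^\dagger C = (S_I A)\, A^\dagger\, (A S_J^*) = S_I (A A^\dagger A) S_J^*.
\end{equation*}
Applying the first Moore--Penrose identity $A A^\dagger A = A$ (listed in Section \ref{SEC:Notation}) yields $S_I A S_J^* = U$, which is exactly the claim.

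There is no real obstacle here: the argument is a one-line consequence of the definition of the pseudoinverse once the selection-matrix formalism is in place, and crucially it does not require $\rank(U)=\rank(A)$, so the statement holds in the full generality asserted. The only thing worth a remark is that no cancellation of the form $A^\dagger A = I$ or $A A^\dagger = I$ is invoked (which would be false in general); only the universally valid identity $A A^\dagger A = A$ is needed.
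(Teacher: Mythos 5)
Your proof is correct: the selection-matrix identities $R=S_IA$, $C=AS_J^*$, $U=S_IAS_J^*$ together with $AA^\dagger A=A$ give $RA^\dagger C=U$ with no rank hypothesis, exactly as claimed. The paper itself defers this proof to \cite{HH2019}, where the argument is essentially the same one you give, so there is nothing to add.
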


\begin{proposition}\label{PROP:Udagger}
Suppose that $A, C, U,$ and $R$ satisfy the conditions of Theorem \ref{THM:Characterization}.  Then \[ U^\dagger=C^\dagger A R^\dagger.\]
\end{proposition}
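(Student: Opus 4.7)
\textbf{Proof plan for Proposition \ref{PROP:Udagger}.} The approach is to verify directly that $X := C^\dagger A R^\dagger$ satisfies the four defining Moore--Penrose conditions for a pseudoinverse of $U$, namely $UXU=U$, $XUX=X$, and that $UX$ and $XU$ are Hermitian. Uniqueness of the Moore--Penrose pseudoinverse then forces $X=U^\dagger$. The two essential ingredients are Proposition \ref{PROP:U=RAC}, which (under the hypotheses of Theorem \ref{THM:Characterization}) gives the clean formula $U = RA^\dagger C$, and the projection identities of Lemma \ref{LEM:Projections}, especially $CC^\dagger = AA^\dagger$, $R^\dagger R = A^\dagger A$, $C^\dagger C = U^\dagger U$, and $RR^\dagger = UU^\dagger$.

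The first step is to simplify the product $UX$. Substituting $U = RA^\dagger C$ and then collapsing $CC^\dagger = AA^\dagger$ in the middle gives
\[ UX \;=\; R A^\dagger (CC^\dagger) A R^\dagger \;=\; R A^\dagger (AA^\dagger) A R^\dagger \;=\; R A^\dagger A R^\dagger. \]
Applying $A^\dagger A = R^\dagger R$ and then using idempotency of the orthogonal projection $RR^\dagger$ reduces this to $RR^\dagger$, which by Lemma \ref{LEM:Projections} equals $UU^\dagger$. In particular, $UX$ is Hermitian. A symmetric computation for $XU$, this time pulling $R^\dagger R = A^\dagger A$ through the middle and then using $AA^\dagger = CC^\dagger$, yields $XU = C^\dagger C = U^\dagger U$, which is again Hermitian. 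This establishes the two Hermiticity conditions.

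With $UX = UU^\dagger$ and $XU = U^\dagger U$ in hand, the remaining two Moore--Penrose conditions are essentially free: $UXU = (UU^\dagger)U = U$ by the standard pseudoinverse identity, and $XUX = (C^\dagger A R^\dagger)\cdot UU^\dagger$, which after one more application of Proposition \ref{PROP:U=RAC} and the same collapsing of $CC^\dagger = AA^\dagger$ and $R^\dagger R = A^\dagger A$ reduces back to $C^\dagger A R^\dagger = X$. Since all four conditions hold, we conclude $X = U^\dagger$, i.e.\ $U^\dagger = C^\dagger A R^\dagger$.

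The proof is essentially bookkeeping: no delicate inequality or perturbation argument is needed, and the only ``hard'' step is keeping track of which pseudoinverse identity collapses where. The one place that requires a moment of care is verifying that the intermediate factors such as $A^\dagger A A^\dagger A$ really do collapse to $A^\dagger A$ (rather than inadvertently being written as $I$, which would be false since $A$ need not be square or invertible); the structural identities from Lemma \ref{LEM:Projections} are precisely what make the bookkeeping go through cleanly.
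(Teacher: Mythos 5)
Your proof is correct. The paper itself does not reprove this proposition---it defers to \cite{HH2019}---so there is no in-text argument to compare against, but your direct verification of the four Penrose axioms for $X=C^\dagger AR^\dagger$ is complete and self-contained: the computations $UX=RA^\dagger(CC^\dagger)AR^\dagger=RA^\dagger AR^\dagger=RR^\dagger=UU^\dagger$ and $XU=C^\dagger A(R^\dagger R)A^\dagger C=C^\dagger AA^\dagger C=C^\dagger C=U^\dagger U$ use only Proposition \ref{PROP:U=RAC} and Lemma \ref{LEM:Projections} (both of which are available under the stated hypotheses, since $\rank(U)=\rank(A)$), and the remaining two axioms then follow as you indicate. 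Your closing caution is also well placed: the collapse $A^\dagger AA^\dagger A=A^\dagger A$ relies on idempotency of the projection, not on any invertibility of $A$, which is exactly the right justification.
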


Our final proposition will be useful in estimating some of the terms that arise in the subsequent analysis.
\begin{proposition}\label{PROP:NormTerms}
Suppose that $A, C, U$, and $R$ are as in Theorem \ref{THM:Characterization} {\color{black}(with selected row and column indices being $I$ and $J$, respectively)} such that $A=CU^\dagger R$, and suppose that $\rank(A)=k$.  Let $A=W_k\Sigma_k V_k^*$ be the compact SVD of $A$.  Then for any unitarily invariant norm $\|\cdot\|$ on $\K^{m\times n}$, we have
\[ \|CU^\dagger\|= \|W_{k,I}^\dagger\|,\quad \text{and} \quad \|U^\dagger R\|= \|V_{k,J}^\dagger\|,\]
where $W_{k,I}:=W_k(I,:)$ and $V_{k,J}:=V_k(J,:)$.
\end{proposition}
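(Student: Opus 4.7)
\textbf{Proof plan for Proposition \ref{PROP:NormTerms}.}

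The plan is to reduce both $CU^\dagger$ and $U^\dagger R$ to explicit expressions in the orthonormal singular vectors $W_k,V_k$ and the pseudoinverses $W_{k,I}^\dagger, V_{k,J}^\dagger$, and then to kill off the $W_k$ and $V_k$ factors using unitary invariance of $\|\cdot\|$. First I would write $A=W_k\Sigma_k V_k^*$, so that
\[ C = W_k\Sigma_k V_{k,J}^*, \qquad R = W_{k,I}\Sigma_k V_k^*, \qquad U = W_{k,I}\Sigma_k V_{k,J}^*. \]
The hypothesis $\rank(U)=k$, combined with the sizes of $W_{k,I}$ and $V_{k,J}$, forces both $W_{k,I}$ (of size $|I|\times k$) and $V_{k,J}$ (of size $|J|\times k$) to have full column rank $k$. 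Consequently $W_{k,I}^*W_{k,I}$ and $V_{k,J}^*V_{k,J}$ are invertible and $W_{k,I}^\dagger = (W_{k,I}^*W_{k,I})^{-1}W_{k,I}^*$, with an analogous formula for $(V_{k,J}^*)^\dagger = V_{k,J}(V_{k,J}^*V_{k,J})^{-1}$.

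Next I would obtain a closed form for $U^\dagger$. Since the decomposition $U=W_{k,I}\Sigma_k V_{k,J}^*$ has $W_{k,I}$ of full column rank, $\Sigma_k$ invertible, and $V_{k,J}^*$ of full row rank, the reverse-order rule for pseudoinverses applies and yields
\[ U^\dagger = V_{k,J}(V_{k,J}^*V_{k,J})^{-1}\,\Sigma_k^{-1}\,(W_{k,I}^*W_{k,I})^{-1}W_{k,I}^* = (V_{k,J}^*)^\dagger\,\Sigma_k^{-1}\,W_{k,I}^\dagger; \]
one can either cite this rule or verify the four Moore--Penrose conditions directly by cancelling the $(V_{k,J}^*V_{k,J})^{-1}V_{k,J}^*V_{k,J}$ and $W_{k,I}^*W_{k,I}(W_{k,I}^*W_{k,I})^{-1}$ blocks. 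Substituting this expression and using $V_{k,J}^* (V_{k,J}^*)^\dagger = I_k$ and $W_{k,I}^\dagger W_{k,I}=I_k$, the $\Sigma_k$'s cancel and I obtain the clean identities
\[ CU^\dagger = W_k W_{k,I}^\dagger, \qquad U^\dagger R = (V_{k,J}^*)^\dagger V_k^*. \]

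Finally I would invoke unitary invariance of $\|\cdot\|$. Since $W_k$ has orthonormal columns, it can be completed to a unitary matrix $[W_k\mid W_k^\perp]$, and then
\[ \|W_kW_{k,I}^\dagger\| = \Bigl\|[W_k\mid W_k^\perp]\begin{bmatrix}W_{k,I}^\dagger\\ 0\end{bmatrix}\Bigr\| = \Bigl\|\begin{bmatrix}W_{k,I}^\dagger\\ 0\end{bmatrix}\Bigr\| = \|W_{k,I}^\dagger\|, \]
where the last equality uses that appending zero rows preserves the singular values and hence any unitarily invariant, uniformly generated norm. For the second identity, I would apply $\|M\|=\|M^*\|$ to $(V_{k,J}^*)^\dagger V_k^*$, obtaining $\|V_k(V_{k,J})^\dagger\|$, and then repeat the same completion argument with $V_k$.

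The main obstacle is essentially bookkeeping: one must justify the reverse-order pseudoinverse identity for $U$ (which is the only non-routine algebraic step), but the full column rank of $W_{k,I}$ and $V_{k,J}$ together with invertibility of $\Sigma_k$ makes this standard. Everything else is direct substitution and unitary invariance.
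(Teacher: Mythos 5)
Your proof is correct and follows essentially the same route as the paper's: express everything through the compact SVD, apply the reverse-order pseudoinverse law to a (full column rank)$\times$(invertible)$\times$(full row rank) factorization, cancel the $\Sigma_k$ factors, and finish with the unitary-completion / zero-padding argument. The only (cosmetic) difference is that you apply the reverse-order law directly to $U=W_{k,I}\Sigma_k V_{k,J}^*$, whereas the paper first invokes its CUR identities ($U^\dagger=C^\dagger AR^\dagger$ and $CC^\dagger=AA^\dagger$) to reduce $CU^\dagger$ to $AR^\dagger$ and then factors $R=W_{k,I}\Sigma_kV_k^*$; both arguments land on $CU^\dagger=W_kW_{k,I}^\dagger$ and $U^\dagger R=(V_{k,J}^*)^\dagger V_k^*$.
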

\begin{proof}
See Appendix \ref{APP:ProofPROP:NormTerms}.
\end{proof}

Unfortunately, it is often difficult to say much about the norms of pseudoinverses of submatrices of the compact SVD of a matrix; however, we will give some indications later of some universal bounds that can be used in certain cases. 

\section{Preliminaries from Matrix Perturbation Theory}\label{SEC:MatrixPerturbation}
Here we collect some useful facts from perturbation theory. The first is due to Weyl:

\begin{theorem}\cite[Corollary 8.6.2.]{GolubVanLoan} \label{THMHoffmanWielandt}
	If $B,E\in\K^{m\times n}$ and $\widetilde{B}=B+E$, then for $1\leq j\leq \min\{m,n\}$,
	\begin{equation}
	\left|\sigma_j(B)-\sigma_j(\widetilde{B})\right|\leq \sigma_1(E)=\|E\|_2.
	\end{equation}
\end{theorem}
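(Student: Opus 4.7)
The plan is to prove Weyl's singular value inequality via the Courant--Fischer min--max characterization of singular values, which is the standard route. Recall that for any $M\in\K^{m\times n}$ and $1\leq j\leq\min\{m,n\}$ one has
\[ \sigma_j(M) \;=\; \max_{\substack{S\subset\K^n\\ \dim S=j}} \;\min_{\substack{x\in S\\ \|x\|_2=1}} \|Mx\|_2. \]
Because $\|\cdot\|_2$ (as a vector or operator norm) satisfies the triangle inequality, for every unit vector $x$ we get the two pointwise bounds
\[ \|\widetilde{B}x\|_2 \;\leq\; \|Bx\|_2 + \|Ex\|_2 \;\leq\; \|Bx\|_2 + \|E\|_2, \qquad \|\widetilde{B}x\|_2 \;\geq\; \|Bx\|_2 - \|E\|_2, \]
using $\|Ex\|_2\leq \|E\|_2\|x\|_2 = \|E\|_2$.

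Next, I would feed these pointwise bounds into the min--max. For any fixed $j$-dimensional subspace $S\subset\K^n$, taking the minimum over unit $x\in S$ of the first pointwise inequality gives
\[ \min_{x\in S,\,\|x\|_2=1}\|\widetilde{B}x\|_2 \;\leq\; \min_{x\in S,\,\|x\|_2=1}\|Bx\|_2 \;+\; \|E\|_2, \]
since adding the constant $\|E\|_2$ commutes with the minimum. Taking the maximum over all $j$-dimensional subspaces $S$ then yields $\sigma_j(\widetilde{B})\leq\sigma_j(B)+\|E\|_2$. Swapping the roles of $B$ and $\widetilde{B}$ (equivalently, using the second pointwise bound and running the same argument) gives $\sigma_j(B)\leq\sigma_j(\widetilde{B})+\|E\|_2$, and combining these two inequalities produces the claimed $|\sigma_j(B)-\sigma_j(\widetilde{B})|\leq\|E\|_2=\sigma_1(E)$.

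There is essentially no obstacle here; the only place where a reader could slip is in justifying that the minimum distributes over the additive perturbation, which is why I explicitly noted that the term $\|E\|_2$ is independent of $x$ and therefore passes through the $\min$ cleanly. An alternative route, which I would mention only in passing, is to embed the problem into the Hermitian setting via the Jordan--Wielandt dilation $\bigl(\begin{smallmatrix} 0 & B \\ B^* & 0 \end{smallmatrix}\bigr)$, whose nonzero eigenvalues are $\pm\sigma_i(B)$, and then invoke Weyl's eigenvalue inequality for Hermitian matrices; but the direct min--max argument above is shorter and self-contained given the Courant--Fischer characterization.
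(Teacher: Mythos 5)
Your proof is correct and complete: the Courant--Fischer min--max characterization plus the pointwise triangle-inequality bounds, with the constant $\|E\|_2$ passing through the $\min$ and $\max$, is exactly the standard argument. The paper does not prove this statement itself but cites it as \cite[Corollary 8.6.2]{GolubVanLoan}, and the proof given there is essentially this same min--max argument, so there is nothing further to reconcile.
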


Note that Theorem \ref{THMHoffmanWielandt} holds in greater generality and is due to Mirsky \cite{Mirsky}.  Therein, it was shown that for any normalized, uniformly generated, unitarily invariant norm $\|\cdot\|$, \begin{equation}\label{EQ:Mirsky}\|\textnormal{diag}(\sigma_1(B)-\sigma_1(\widetilde{B}), \sigma_2(B)-\sigma_2(\widetilde{B}),\cdots)\|\leq\|E\|.\end{equation}  

The following Theorem of Stewart provides an estimate for how large the difference of pseudoinverses can be.  

\begin{theorem}\cite[Theorems 3.1--3.4]{Stewart_1977}\label{THM:Stewart}
	Let $\|\cdot\|$ be any normalized, uniformly generated, unitarily invariant norm on $\K^{m\times n}$.  For any $B,E\in\K^{m\times n}$ with $\widetilde{B}=B+E$, if $\rank(\widetilde{B})=\rank(B)$, then
	\[
	\|B^{\dagger}-\widetilde{B}^{\dagger} \|\leq\mu \|\widetilde{B}^{\dagger}\|_2\|B^{\dagger}\|_2\|E\|,
	\]
	where $1\leq\mu\leq 3$ is a constant depending only on the norm.
    
    If $\rank(\widetilde{B})\neq\rank(B)$, then
    \[ \|B^\dagger-\widetilde{B}^\dagger\|\leq\mu\max\{\|\widetilde{B}^\dagger\|_2^2,\|B^\dagger\|_2^2\}\|E\| \text{ and } \frac{1}{\|E\|_2}\leq\|B^\dagger-\widetilde{B}^\dagger\|_2. \]
\end{theorem}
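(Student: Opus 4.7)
The plan is to decompose $\widetilde{B}^\dagger - B^\dagger$ via a Wedin-style algebraic identity that separates a ``first-order'' term from two ``correction'' terms built from the orthogonal projectors $I - BB^\dagger$, $I - \widetilde{B}\widetilde{B}^\dagger$, $I - B^\dagger B$, and $I - \widetilde{B}^\dagger\widetilde{B}$. Concretely, one verifies directly from the four Moore--Penrose identities the relation
\[ \widetilde{B}^\dagger - B^\dagger = -\widetilde{B}^\dagger E B^\dagger + \widetilde{B}^\dagger \widetilde{B}^{\dagger*} E^* (I - BB^\dagger) + (I - \widetilde{B}^\dagger\widetilde{B}) E^* B^{\dagger*} B^\dagger, \]
where $E = \widetilde{B} - B$; the verification is a lengthy but mechanical expansion using $B = BB^\dagger B$, $\widetilde{B} = \widetilde{B}\widetilde{B}^\dagger\widetilde{B}$, and the self-adjointness of the two symmetric projectors attached to each pseudoinverse.

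With the identity in hand, the proof reduces to bounding the three summands under the chosen norm. Submultiplicativity immediately gives $\|\widetilde{B}^\dagger E B^\dagger\| \leq \|\widetilde{B}^\dagger\|_2 \|B^\dagger\|_2 \|E\|$. For the correction terms in the equal-rank case, the key observation is that $(I - BB^\dagger)\widetilde{B} = (I - BB^\dagger)E$ and $\widetilde{B}(I - B^\dagger B) = E(I - B^\dagger B)$, so a projector applied to $\widetilde{B}$ effectively acts on the small matrix $E$. Because the ranks agree, the projectors $I - BB^\dagger$ and $I - \widetilde{B}\widetilde{B}^\dagger$ have ranges of equal dimension, and by inserting factors of the form $\widetilde{B}\widetilde{B}^\dagger$ one rearranges each correction term to a product bounded by $c \|B^\dagger\|_2 \|\widetilde{B}^\dagger\|_2 \|E\|$. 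Summing the three bounds and optimizing gives the constant $\mu$: one finds $\mu = \sqrt{2}$ for the Frobenius norm, $\mu = (1+\sqrt{5})/2$ for the spectral norm, and in general $\mu \leq 3$ for any normalized, uniformly generated, unitarily invariant norm.

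For the unequal-rank case the symmetrization above fails, so I would bound each correction term pessimistically by $\max\{\|B^\dagger\|_2^2,\|\widetilde{B}^\dagger\|_2^2\}\|E\|$ via raw submultiplicativity, yielding the stated upper estimate. The lower bound $1/\|E\|_2 \leq \|B^\dagger - \widetilde{B}^\dagger\|_2$ comes from Weyl's inequality (Theorem \ref{THMHoffmanWielandt}) and a direct singular-value comparison: if, say, $\rank(\widetilde{B}) > r := \rank(B)$, then $\sigma_{r+1}(\widetilde{B}) \leq \|E\|_2$, which forces $\|\widetilde{B}^\dagger\|_2 \geq 1/\|E\|_2$ while $B^\dagger$ annihilates the corresponding singular direction; testing $B^\dagger - \widetilde{B}^\dagger$ against the associated singular vector produces the claimed gap in spectral norm.

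The main obstacle is the symmetrization in the equal-rank case: getting a bound of the form $\|B^\dagger\|_2 \|\widetilde{B}^\dagger\|_2 \|E\|$ rather than the weaker $\max\{\|B^\dagger\|_2^2, \|\widetilde{B}^\dagger\|_2^2\}\|E\|$ requires a careful exploitation of the equal-dimension projectors, and tracking the best constant $\mu$ uniformly across all admissible norms needs Mirsky's inequality \eqref{EQ:Mirsky} to transfer spectral-norm estimates to the general unitarily invariant setting --- which is what ultimately forces the upper bound $\mu \leq 3$.
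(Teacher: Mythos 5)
This statement is imported verbatim from Stewart \cite[Theorems 3.1--3.4]{Stewart_1977}; the paper gives no proof of it, so there is nothing internal to compare against. Your sketch is essentially the classical Wedin--Stewart argument: the three-term decomposition you write down is exactly the identity used in the source, the first term gives the $\|\widetilde{B}^\dagger\|_2\|B^\dagger\|_2\|E\|$ contribution, and the symmetrization of the two correction terms in the equal-rank (``acute'') case via $(I-BB^\dagger)\widetilde{B}=(I-BB^\dagger)E$ and the equal-rank projections is the right mechanism. Two details are off as written, though both are repairable. First, in the lower bound for the unequal-rank case, $B^\dagger$ does \emph{not} in general annihilate the left singular vector of $\widetilde{B}$ associated with $\sigma_{r+1}(\widetilde{B})$; that vector need not lie in $\mathcal{R}(B)^\perp$. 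The correct test vector is a unit $y\in\mathcal{R}(\widetilde{B})\cap\mathcal{R}(B)^\perp$, which exists by a dimension count; then $B^\dagger y=0$, and writing $x=\widetilde{B}^\dagger y$ one gets $Ex=y-Bx$ with $y\perp Bx$, hence $\|x\|_2\geq 1/\|E\|_2$ and the claimed gap follows --- Weyl's inequality is not actually needed here. Second, the constant $\mu\leq 3$ for general norms does not come from Mirsky's inequality; it comes from bounding each of the three summands by $\|\widetilde{B}^\dagger\|_2\|B^\dagger\|_2\|E\|$ using the mixed submultiplicativity $\|AB\|\leq\|A\|_2\|B\|$ and adding. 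The sharper constants $\sqrt{2}$ (Frobenius) and $(1+\sqrt{5})/2$ (spectral) require, respectively, exploiting the mutual orthogonality of the ranges of the correction terms and a more delicate balancing argument, not merely ``summing and optimizing.''
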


The precise value of $\mu$ depends on the norm used and the relation of the rank of the matrices to their size; in particular, $\mu=3$ for an arbitrary norm satisfying the hypotheses in Section \ref{SEC:Notation}, whereas $\mu=\sqrt{2}$ for the Frobenius norm, and $\mu=\frac{1+\sqrt{5}}{2}$ (the Golden Ratio) for the spectral norm.

The preceding theorems yield the following immediate corollary.

\begin{corollary}\label{COR:AdaggerBounds}
	With the assumptions of Theorem \ref{THM:Stewart}, if $\widetilde{B}=B+E$ and $\rank(\widetilde{B})=\rank(B)=k$, then
	\[|\|B^{\dagger}\|-\|\widetilde{B}^{\dagger}\||\leq \mu\|B^\dagger\|_2\|\widetilde{B}^{\dagger}\|_2\|E\|.\]
Moreover, if $\sigma_k(B)>\mu\|E\|$, then
	\[\frac{\|B^\dagger\|}{1+\mu\|B^\dagger\|_2\|E\|}\leq\|\widetilde{B}^{\dagger}\|\leq \frac{\|B^\dagger\|}{1-\mu\|B^\dagger\|_2\|E\|}.\]
\end{corollary}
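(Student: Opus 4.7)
The plan is to derive both statements by combining Theorem~\ref{THM:Stewart} with the reverse triangle inequality and then resolving a self-referential inequality. For the first bound, since $\rank(\widetilde{B})=\rank(B)$, Theorem~\ref{THM:Stewart} gives $\|B^\dagger-\widetilde{B}^\dagger\|\leq\mu\|B^\dagger\|_2\|\widetilde{B}^\dagger\|_2\|E\|$; applying the reverse triangle inequality $\bigl|\|B^\dagger\|-\|\widetilde{B}^\dagger\|\bigr|\leq\|B^\dagger-\widetilde{B}^\dagger\|$ immediately yields the claim.

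For the moreover statement, the main obstacle is that the right-hand side of Stewart's estimate contains $\|\widetilde{B}^\dagger\|_2$, yet I want a bound on $\|\widetilde{B}^\dagger\|$ in the general norm whose right-hand side involves only $\|B^\dagger\|_2$ and $\|B^\dagger\|$. My approach is to use the normalization property $\|\cdot\|_2\leq\|\cdot\|$ recalled in Section~\ref{SEC:Notation} to majorize $\|\widetilde{B}^\dagger\|_2$ by $\|\widetilde{B}^\dagger\|$, thereby producing a linear inequality in $\|\widetilde{B}^\dagger\|$ that can be solved directly. Concretely, splitting the absolute value in the first bound yields
\[ \|\widetilde{B}^\dagger\|-\|B^\dagger\|\leq\mu\|B^\dagger\|_2\|\widetilde{B}^\dagger\|_2\|E\|\leq\mu\|B^\dagger\|_2\|\widetilde{B}^\dagger\|\|E\|, \]
and rearranging gives $\|\widetilde{B}^\dagger\|\bigl(1-\mu\|B^\dagger\|_2\|E\|\bigr)\leq\|B^\dagger\|$. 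Here the hypothesis $\sigma_k(B)>\mu\|E\|$ enters decisively: since $\|B^\dagger\|_2=1/\sigma_k(B)$, it translates to $\mu\|B^\dagger\|_2\|E\|<1$, so the parenthesized factor is strictly positive and division produces the claimed upper bound.

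The lower bound is symmetric: starting from $\|B^\dagger\|-\|\widetilde{B}^\dagger\|\leq\mu\|B^\dagger\|_2\|\widetilde{B}^\dagger\|\|E\|$, one rearranges to $\|B^\dagger\|\leq\|\widetilde{B}^\dagger\|\bigl(1+\mu\|B^\dagger\|_2\|E\|\bigr)$ and divides through by the always-positive factor $1+\mu\|B^\dagger\|_2\|E\|$. The only delicate point throughout is the rank-preservation hypothesis, which simultaneously makes the small-constant version of Theorem~\ref{THM:Stewart} available and guarantees positivity of the denominator $1-\mu\|B^\dagger\|_2\|E\|$; no further matrix-theoretic machinery is needed beyond this self-referential trick and routine algebra.
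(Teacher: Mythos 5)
Your proof is correct and is precisely the "immediate" argument the paper has in mind (the paper offers no written proof, calling the corollary an immediate consequence of Theorem \ref{THM:Stewart}): reverse triangle inequality for the first bound, then majorizing $\|\widetilde{B}^\dagger\|_2$ by $\|\widetilde{B}^\dagger\|$ via the normalization property and solving the resulting linear inequality, with the hypothesis $\sigma_k(B)>\mu\|E\|$ (equivalently $\mu\|B^\dagger\|_2\|E\|<1$) guaranteeing the denominator is positive so the division preserves the inequality. No gaps.
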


Regard that from the representation of $B^\dagger$ in terms of the SVD of $B$ mentioned in Section \ref{SEC:Notation}, we have $\|B^\dagger\|_2=1/\sigma_{\min}(B)$, where $\sigma_{\min}(B)$ is the smallest nonzero singular value of $B$; this is sometimes how the inequalities in Corollary \ref{COR:AdaggerBounds} are written.

\section{Proofs}\label{SEC:Proofs}

\subsection{Proofs from Section \ref{SEC:CCARR}}
Before proving Theorem \ref{THM:ErrFApp}, we need the following lemma.

\begin{lemma}\label{LEM:CCA1}
The following hold: 
\begin{align*}
\|A-\widetilde{C}\widetilde{C}^\dagger \widetilde{A}\| &\leq \|E_J\|\|C^\dagger A\|+\|E\|,\\
 \|A-\widetilde{A}\widetilde{R}^\dagger\widetilde{R}\| &\leq \|E_I\|\|AR^\dagger\|+\|E\|.
\end{align*}
\end{lemma}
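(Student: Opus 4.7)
The plan is to prove both bounds by exploiting the fact that $\widetilde{C}\widetilde{C}^\dagger$ and $\widetilde{R}^\dagger\widetilde{R}$ are orthogonal projections and hence have spectral norm at most one. Focus first on the column-side inequality.

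Write $A = \widetilde{A} - E$ so that
\[ A - \widetilde{C}\widetilde{C}^\dagger\widetilde{A} = (I - \widetilde{C}\widetilde{C}^\dagger)\widetilde{A} - E = (I - \widetilde{C}\widetilde{C}^\dagger)A - \widetilde{C}\widetilde{C}^\dagger E, \]
after substituting $\widetilde{A} = A + E$ and simplifying. Our task reduces to controlling $(I - \widetilde{C}\widetilde{C}^\dagger)A$, since the other term is bounded by $\|E\|$ using $\|\widetilde{C}\widetilde{C}^\dagger\|_2 \leq 1$.

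The key trick is to introduce $C$ via the identity $A = CC^\dagger A$, which holds under our standing assumption $\rank(C) = \rank(A) = k$ by Lemma \ref{LEM:Projections}. Writing $C = \widetilde{C} - E_J$ and using $(I - \widetilde{C}\widetilde{C}^\dagger)\widetilde{C} = 0$ yields
\[ (I - \widetilde{C}\widetilde{C}^\dagger)C = -(I - \widetilde{C}\widetilde{C}^\dagger)E_J, \]
and right-multiplying by $C^\dagger A$ gives $(I - \widetilde{C}\widetilde{C}^\dagger)A = -(I - \widetilde{C}\widetilde{C}^\dagger) E_J C^\dagger A$. Taking norms and using submultiplicativity together with $\|I - \widetilde{C}\widetilde{C}^\dagger\|_2 \leq 1$ delivers the claimed bound $\|E_J\|\|C^\dagger A\| + \|E\|$.

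The second inequality is entirely symmetric: start from $A - \widetilde{A}\widetilde{R}^\dagger\widetilde{R} = A(I - \widetilde{R}^\dagger\widetilde{R}) - E\widetilde{R}^\dagger\widetilde{R}$, invoke $A = A R^\dagger R$ from Lemma \ref{LEM:Projections}, and exploit $\widetilde{R}(I - \widetilde{R}^\dagger\widetilde{R}) = 0$ via $R = \widetilde{R} - E_I$ to obtain $R(I - \widetilde{R}^\dagger\widetilde{R}) = -E_I(I - \widetilde{R}^\dagger\widetilde{R})$, yielding $A(I - \widetilde{R}^\dagger\widetilde{R}) = -AR^\dagger E_I(I - \widetilde{R}^\dagger\widetilde{R})$. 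No step poses a real obstacle; the only subtle point is remembering to pull the single noise factor out using the spectral-norm estimate $\|AB\| \leq \|A\|\|B\|_2$ (applicable since projections have spectral norm one), which prevents the bound from picking up an unwanted extra factor.
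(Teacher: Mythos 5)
Your proof is correct and follows essentially the same route as the paper's: split off the $\|E\|$ term using the fact that $\widetilde{C}\widetilde{C}^\dagger$ (resp.\ $\widetilde{R}^\dagger\widetilde{R}$) is an orthogonal projection, insert $A=CC^\dagger A$ (resp.\ $A=AR^\dagger R$), and use $(I-\widetilde{C}\widetilde{C}^\dagger)\widetilde{C}=0$ together with $C=\widetilde{C}-E_J$ to trade $C$ for $E_J$. The only cosmetic difference is that you carry the exact identity $(I-\widetilde{C}\widetilde{C}^\dagger)C=-(I-\widetilde{C}\widetilde{C}^\dagger)E_J$ where the paper passes to norms one step earlier; the estimates are identical.
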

\begin{proof}
First, notice that
\begin{eqnarray*}
\|(I-\widetilde{C}\widetilde{C}^\dagger)C\|&=&\|(I-\widetilde{C}\widetilde{C}^\dagger)\widetilde{C}-(I-\widetilde{C}\widetilde{C}^\dagger)E_J\|\\
&\leq&\|(I-\widetilde{C}\widetilde{C}^\dagger)\widetilde{C}\|+\|(I-\widetilde{C}\widetilde{C}^\dagger)E_J\|\\
&\leq&\|E_J\|.
\end{eqnarray*}
The final inequality arises because the first norm term is 0 by identity of the Moore--Penrose pseudoinverse and $\|I-\widetilde{C}\widetilde{C}^\dagger\|_2\leq1$ as this is an orthogonal projection operator.  Now since $\rank(C)=\rank(A)=k$, we have $A=CC^\dagger A$; therefore,
\begin{eqnarray*}
\| A-\widetilde{C}\widetilde{C}^\dagger \widetilde{A}\|&\leq&\|(I-\widetilde{C}\widetilde{C}^\dagger) A\|+\|E\|\\
&=&\|(I-\widetilde{C}\widetilde{C}^\dagger)CC^\dagger A\|+\|E\|\\
&\leq&\|E_J\|\|C^\dagger A\|+\|E\|.
\end{eqnarray*}
The second inequality follows by mimicking the above argument.
\end{proof}

\begin{proof}[Proof of Theorem \ref{THM:ErrFApp}]
First note that \[\|A-\widetilde{C}\widetilde{C}^\dagger\widetilde{A}\widetilde{R}^\dagger\widetilde{R}\|\leq \|A-\widetilde{C}\widetilde{C}^\dagger\widetilde{A}\| + \|\widetilde{A}-\widetilde{A}\widetilde{R}^\dagger\widetilde{R}\|\]
by the triangle inequality and the fact that $\|\widetilde{C}\widetilde{C}^\dagger\|_2\leq1$.  The proof is completed by first noting that the second term above satisfies $\|\widetilde{A}(I-\widetilde{R}^\dagger\widetilde{R})\|\leq \|E\|+\|A(I-\widetilde{R}^\dagger\widetilde{R})\|$ since $I-\widetilde{R}^\dagger\widetilde{R}$ is a projection, and then applying the inequalities of Lemma \ref{LEM:CCA1}.  The second stated inequality follows directly by Proposition \ref{PROP:NormTerms} and the fact that the norms of submatrices of $E$ are at most $\|E\|$.
\end{proof}

\subsection{Proofs for Section \ref{SEC:A-CUR}}

\begin{proposition}\label{PROP:Perturbation1}
Let $\tau\geq0$ be fixed; then the following holds:
\begin{multline*} \|A-\widetilde{C}[\widetilde{U}]_{\tau}^{\dagger}\widetilde{R}\|\leq \|C[\widetilde{U}]_{\tau}^\dagger\|\|E_I\|+\|[\widetilde{U}]_{\tau}^\dagger \widetilde{R}\|\|E_J\| \\ +\|W_{k,I}^\dagger\|\|V_{k,J}^\dagger\|\left[2\|E_{I,J}\|+\|[\widetilde{U}]_\tau-U\|+\|[\widetilde{U}]_\tau^\dagger\|\|E_{I,J}\|^2\right].\end{multline*}
\end{proposition}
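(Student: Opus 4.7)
The plan is to first peel off boundary terms by writing $\widetilde{C} = C + E_J$ and $\widetilde{R} = R + E_I$. Since $A = CU^\dagger R$ by the rank hypothesis and Theorem \ref{THM:Characterization}, direct expansion yields
\[ A - \widetilde{C}[\widetilde{U}]_\tau^\dagger\widetilde{R} = C(U^\dagger - [\widetilde{U}]_\tau^\dagger)R - C[\widetilde{U}]_\tau^\dagger E_I - E_J[\widetilde{U}]_\tau^\dagger\widetilde{R}, \]
and submultiplicativity immediately bounds the last two summands by $\|C[\widetilde{U}]_\tau^\dagger\|\|E_I\|$ and $\|[\widetilde{U}]_\tau^\dagger\widetilde{R}\|\|E_J\|$, matching the first two summands of the claim.

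The core of the proof is to bound $\|C(U^\dagger - [\widetilde{U}]_\tau^\dagger)R\|$ by the bracketed quantity. The key tools I would use are: (i) the projection identities $CU^\dagger U = C$ and $UU^\dagger R = R$ supplied by Lemma \ref{LEM:Projections}; (ii) the thresholding identities $\widetilde{U}[\widetilde{U}]_\tau^\dagger = [\widetilde{U}]_\tau[\widetilde{U}]_\tau^\dagger$ and $[\widetilde{U}]_\tau^\dagger\widetilde{U} = [\widetilde{U}]_\tau^\dagger[\widetilde{U}]_\tau$, which follow from inserting the SVD of $\widetilde{U}$ and noting that $\widetilde{\Sigma}[\widetilde{\Sigma}]_\tau^\dagger = [\widetilde{\Sigma}]_\tau[\widetilde{\Sigma}]_\tau^\dagger$ is the diagonal indicator of singular values at least $\tau$; and (iii) the norm identities $\|CU^\dagger\| = \|W_{k,I}^\dagger\|$ and $\|U^\dagger R\| = \|V_{k,J}^\dagger\|$ from Proposition \ref{PROP:NormTerms}. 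Substituting $C = CU^\dagger U = CU^\dagger(\widetilde{U} - E_{I,J})$ into $C[\widetilde{U}]_\tau^\dagger R$ and applying the left thresholding identity produces the clean split
\[ C(U^\dagger - [\widetilde{U}]_\tau^\dagger)R = CU^\dagger(I - [\widetilde{U}]_\tau[\widetilde{U}]_\tau^\dagger)R + CU^\dagger E_{I,J}[\widetilde{U}]_\tau^\dagger R. \]

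For the projection piece, I would use $R = UU^\dagger R$ together with $(I - [\widetilde{U}]_\tau[\widetilde{U}]_\tau^\dagger)U = (I - [\widetilde{U}]_\tau[\widetilde{U}]_\tau^\dagger)(U - [\widetilde{U}]_\tau)$ (valid because $(I - [\widetilde{U}]_\tau[\widetilde{U}]_\tau^\dagger)[\widetilde{U}]_\tau = 0$) to rewrite it as $CU^\dagger(I - [\widetilde{U}]_\tau[\widetilde{U}]_\tau^\dagger)(U - [\widetilde{U}]_\tau)U^\dagger R$, whose norm is at most $\|W_{k,I}^\dagger\|\|V_{k,J}^\dagger\|\|[\widetilde{U}]_\tau - U\|$ via submultiplicativity and $\|I - [\widetilde{U}]_\tau[\widetilde{U}]_\tau^\dagger\|_2 \leq 1$. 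For the residual piece, I would apply the mirror identity $[\widetilde{U}]_\tau^\dagger R = [\widetilde{U}]_\tau^\dagger[\widetilde{U}]_\tau U^\dagger R - [\widetilde{U}]_\tau^\dagger E_{I,J}U^\dagger R$ to split $CU^\dagger E_{I,J}[\widetilde{U}]_\tau^\dagger R$ into two sub-pieces: the first is controlled by $\|[\widetilde{U}]_\tau^\dagger[\widetilde{U}]_\tau\|_2 \leq 1$ and contributes $\|W_{k,I}^\dagger\|\|V_{k,J}^\dagger\|\|E_{I,J}\|$, while the second, after inserting $\|[\widetilde{U}]_\tau^\dagger\|$, delivers the quadratic tail $\|W_{k,I}^\dagger\|\|V_{k,J}^\dagger\|\|[\widetilde{U}]_\tau^\dagger\|\|E_{I,J}\|^2$. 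Summing, the core term is bounded by $\|W_{k,I}^\dagger\|\|V_{k,J}^\dagger\|(\|E_{I,J}\| + \|[\widetilde{U}]_\tau - U\| + \|[\widetilde{U}]_\tau^\dagger\|\|E_{I,J}\|^2)$, which is already tighter than the claimed $2\|E_{I,J}\|$ coefficient and hence implies the stated bound.

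The main obstacle will be the bookkeeping: keeping the various $U \leftrightarrow \widetilde{U} - E_{I,J}$ and $R \leftrightarrow UU^\dagger R$ substitutions straight so that the projection structure collapses cleanly without producing spurious factors, and verifying the thresholding identities rigorously from the SVD. Once those are in place, assembling the sub-bounds is a routine application of submultiplicativity, the triangle inequality, and the norm identities from Proposition \ref{PROP:NormTerms}.
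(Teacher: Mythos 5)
Your proposal is correct and follows essentially the same route as the paper's proof: peel off the $E_I$ and $E_J$ contributions, then control $C(U^\dagger-[\widetilde{U}]_\tau^\dagger)R$ using $C=CU^\dagger U$, $R=UU^\dagger R$, the projection properties of $\widetilde{U}[\widetilde{U}]_\tau^\dagger$ and $[\widetilde{U}]_\tau^\dagger\widetilde{U}$, and the norm identities of Proposition \ref{PROP:NormTerms}. The only difference is that your one-sided substitution yields a single $\|E_{I,J}\|$ cross term where the paper's symmetric expansion yields two, so you obtain a marginally sharper constant that still implies the stated bound.
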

\begin{proof}
Begin with the fact that \[ \|A-\widetilde{C}[\widetilde{U}]_{\tau}^\dagger \widetilde{R}\|\leq \|A-CU^\dagger R\|+\|CU^\dagger R-\widetilde{C}[\widetilde{U}]_{\tau}^\dagger \widetilde{R}\|,\]
and notice that the first term is 0 by the assumption on $U$.  Then we have
	\begin{eqnarray*}
	\|CU^\dagger R-\widetilde{C}[\widetilde{U}]_{\tau}^\dagger \widetilde{R}\|&\leq&\|CU^\dagger R-C[\widetilde{U}]_{\tau}^\dagger R\|+\|C[\widetilde{U}]_{\tau}^\dagger R -C[\widetilde{U}]_{\tau}^\dagger \widetilde{R}\|+\|C[\widetilde{U}]_{\tau}^\dagger \widetilde{R}-\widetilde{C}[\widetilde{U}]_{\tau}^\dagger \widetilde{R}\|\\
	&\leq&\|CU^\dagger R-C[\widetilde{U}]_{\tau}^\dagger R\|+\|C[\widetilde{U}]_{\tau}^\dagger\|_2\| R- \widetilde{R}\|+\|C - \widetilde{C}\|\|[\widetilde{U}]_{\tau}^\dagger \widetilde{R}\|_2 \\
    & = & \|CU^\dagger R-C[\widetilde{U}]_{\tau}^\dagger R\|+\|C[\widetilde{U}]_{\tau}^\dagger\|_2\|E_I\|+\|[\widetilde{U}]_{\tau}^\dagger \widetilde{R}\|_2\|E_J\|.
	\end{eqnarray*}

To estimate the first term above, note that Lemma \ref{LEM:Projections} implies that $C = CC^\dagger C = CU^\dagger U$, and likewise $R=RR^\dagger R = UU^\dagger$; {\color{black}with the additional fact that $\widetilde{U}[\widetilde{U}]_\tau^\dagger\widetilde{U} = [\widetilde{U}]_\tau$}, the following holds:
\begin{eqnarray}\label{EQCURDiff1}
	\|CU^\dagger R-C[\widetilde{U}]_{\tau}^\dagger R\|&=&\|CU^\dagger UU^\dagger R-CU^\dagger(\widetilde{U}-E_{I,J})[\widetilde{U}]_{\tau}^\dagger(\widetilde{U}-E_{I,J})U^\dagger R\| \nonumber\\
	&\leq&\|CU^\dagger(U-[\widetilde{U}]_{\tau})U^\dagger R\|+\|CU^\dagger\widetilde{U} [\widetilde{U}]_\tau^\dagger  E_{I,J} U^\dagger R\|+ \nonumber\\
	&&\|CU^\dagger E_{I,J}[\widetilde{U}]_\tau^\dagger \widetilde{U} U^\dagger R\|+\|CU^\dagger E_{I,J}[\widetilde{U}]_\tau^\dagger  E_{I,J} U^\dagger R\|.
	\end{eqnarray}
The first term in \eqref{EQCURDiff1} is evidently at most $\|CU^\dagger\|\|U^\dagger R\|\|[\widetilde U]_\tau-U\|$, whereas the second is majorized by the same quantity on account of the fact that $\widetilde{U}[\widetilde{U}]_\tau^\dagger$ is a projection.  Similarly, as $[\widetilde{U}]_\tau^\dagger\widetilde{U}$ is a projection, the third term in \eqref{EQCURDiff1} is at most $\|CU^\dagger\|\|U^\dagger R\|\| E_{I,J}\|$, while the final term is at most $\|CU^\dagger\|\|U^\dagger R\|\|[\widetilde{U}]_\tau^\dagger\|\|E_{I,J}\|^2.$  Putting these observations together, and combining \eqref{EQCURDiff1} with Proposition \ref{PROP:NormTerms} yields the following:

 \begin{equation}\label{EQ:CUR-CtildeUR1}
    \|CU^\dagger R-C[\widetilde{U}]_\tau^\dagger R\| \leq \|W_{k,I}^\dagger\|\|V_{k,J}^\dagger\|(2\|E_{I,J}\|+\|[\widetilde{U}]_\tau-U\|+\|[\widetilde{U}]_\tau^\dagger\|_2\|E_{I,J}\|^2).
	\end{equation}
	Combining the estimates of \eqref{EQCURDiff1} and \eqref{EQ:CUR-CtildeUR1} yields the desired conclusion.
\end{proof}

\begin{lemma}\label{LEM:PBTerms0}
If $\tau\geq0$, then the following hold:
\begin{enumerate}[(i)]
\item\label{ITEM:CUkBound} 
$\|C[\widetilde{U}]_{\tau}^\dagger\|\leq \|[\widetilde{U}]_\tau^\dagger\|\|E_{I,J}\|\|W_{k,I}^\dagger\|+\|W_{k,I}^\dagger\|$,

\medskip
\item\label{ITEM:UkRBound} 
$\|[\widetilde{U}]_\tau^\dagger\widetilde{R}\|\leq \|[\widetilde{U}]_\tau^\dagger\|\left(\|E_{I,J}\|\|V_{k,J}^\dagger\|+\|E_I\|\right)+\|V_{k,J}^\dagger\|.$
\end{enumerate}
\end{lemma}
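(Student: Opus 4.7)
The plan is to prove both (i) and (ii) by the same maneuver: rewrite the exact factor ($C$ on the left or $R$ on the right) using the identity coming from the \emph{exact} CUR decomposition of $A$, and then substitute $U=\widetilde{U}-E_{I,J}$ to expose the perturbation. From Lemma \ref{LEM:Projections} (applied to $A$ and the full-rank submatrices $C,U,R$) we have $C=CU^\dagger U$ and $R=UU^\dagger R$, and from Proposition \ref{PROP:NormTerms} we have $\|CU^\dagger\|=\|W_{k,I}^\dagger\|$ and $\|U^\dagger R\|=\|V_{k,J}^\dagger\|$. These are the only two facts about $A$ we will invoke.

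For (i), I would write
\[
C[\widetilde{U}]_\tau^\dagger = CU^\dagger U\,[\widetilde{U}]_\tau^\dagger = CU^\dagger \widetilde{U}\,[\widetilde{U}]_\tau^\dagger - CU^\dagger E_{I,J}\,[\widetilde{U}]_\tau^\dagger,
\]
and then apply the triangle inequality. The first summand is handled by the key observation that $\widetilde{U}[\widetilde{U}]_\tau^\dagger$ is an orthogonal projection: writing the SVD $\widetilde{U}=W\Sigma V^*$ and noting that $[\widetilde{U}]_\tau$ shares the same singular vectors, one checks $\widetilde{U}[\widetilde{U}]_\tau^\dagger = W\Sigma[\Sigma]_\tau^\dagger W^*$, which is diagonal with entries in $\{0,1\}$, hence has spectral norm at most $1$. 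So $\|CU^\dagger \widetilde{U}[\widetilde{U}]_\tau^\dagger\|\le \|CU^\dagger\|\,\|\widetilde{U}[\widetilde{U}]_\tau^\dagger\|_2\le \|W_{k,I}^\dagger\|$ using $\|AB\|\le\|A\|\|B\|_2$. The second summand is bounded by full submultiplicativity: $\|CU^\dagger E_{I,J}[\widetilde{U}]_\tau^\dagger\|\le \|W_{k,I}^\dagger\|\,\|E_{I,J}\|\,\|[\widetilde{U}]_\tau^\dagger\|$, giving the claimed inequality.

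For (ii), the mirror argument expands
\[
[\widetilde{U}]_\tau^\dagger\widetilde{R} = [\widetilde{U}]_\tau^\dagger R + [\widetilde{U}]_\tau^\dagger E_I = [\widetilde{U}]_\tau^\dagger \widetilde{U} U^\dagger R - [\widetilde{U}]_\tau^\dagger E_{I,J} U^\dagger R + [\widetilde{U}]_\tau^\dagger E_I.
\]
The same SVD calculation shows $[\widetilde{U}]_\tau^\dagger \widetilde{U}$ is also an orthogonal projection with spectral norm $\le 1$, so the first term is bounded by $\|U^\dagger R\|=\|V_{k,J}^\dagger\|$; the remaining two terms are bounded straightforwardly by submultiplicativity, producing the stated right-hand side.

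The main obstacle, if there is one, is really just verifying the projection property of $\widetilde{U}[\widetilde{U}]_\tau^\dagger$ and $[\widetilde{U}]_\tau^\dagger\widetilde{U}$ when $\tau>0$ forces $[\widetilde{U}]_\tau$ to be rank-deficient relative to $\widetilde{U}$; the fact that $\widetilde{U}$ and $[\widetilde{U}]_\tau$ share singular vectors makes this immediate from the SVD and is the only nontrivial piece of algebra. Beyond that, the proof is almost bookkeeping: one must decide which submultiplicative bound ($\|AB\|\le\|A\|\|B\|$ versus $\|AB\|\le\|A\|\|B\|_2$) to apply at each step so that the right-hand side retains the non-spectral norms appearing in the statement.
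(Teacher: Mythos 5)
Your proof is correct and follows essentially the same route as the paper's: rewrite $C=CU^\dagger U$ and $R=UU^\dagger R$ via the exact CUR identities, substitute $U=\widetilde{U}-E_{I,J}$, use that $\widetilde{U}[\widetilde{U}]_\tau^\dagger$ and $[\widetilde{U}]_\tau^\dagger\widetilde{U}$ are orthogonal projections, and invoke Proposition \ref{PROP:NormTerms} for $\|CU^\dagger\|=\|W_{k,I}^\dagger\|$ and $\|U^\dagger R\|=\|V_{k,J}^\dagger\|$. The only cosmetic difference is that you apply the triangle inequality before peeling off the spectral-norm factor, whereas the paper bounds $\|U[\widetilde{U}]_\tau^\dagger\|_2$ as a whole first; the resulting bounds are identical.
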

\begin{proof}
 To see  $(\ref{ITEM:CUkBound})$, notice that $C=CU^\dagger U$ by Theorem \ref{THM:Characterization}, whence applying Proposition \ref{PROP:NormTerms} yields

\begin{eqnarray*}
\|C[\widetilde{U}]_\tau^\dagger\|&=&\|CU^\dagger U[\widetilde{U}]_\tau^\dagger\| \\
&\leq&\|CU^\dagger\|\|U[\widetilde{U}]_\tau^\dagger\|_2\\
&=&\|W_{k,I}^\dagger\|\|(\widetilde{U}-E_{I,J})[\widetilde{U}]_\tau^\dagger\|_2\\
&\leq& \|W_{k,I}^\dagger\|(\|\widetilde{U}[\widetilde{U}]_\tau^\dagger\|_2+\|E_{I,J}\|_2\|[\widetilde{U}]_\tau^\dagger\|_2)\\
&\leq&\|W_{k,I}^\dagger\|(1+\|E_{I,J}\|\|[\widetilde{U}]_\tau^\dagger\|).
\end{eqnarray*}

Similarly, we have \[  \|[\widetilde{U}]_\tau^\dagger R \|\leq \|V_{k,J}^\dagger\|(1+\|E_{I,J}\|\|[\widetilde{U}]_\tau^\dagger\|).\]
Thus to prove $(\ref{ITEM:UkRBound})$, note that
\begin{eqnarray*}
\|[\widetilde{U}]_\tau^\dagger \widetilde{R} \|&\leq&\|[\widetilde{U}]_\tau^\dagger R\|+\|[\widetilde{U}]_\tau^\dagger E_I\| \\
&\leq&\|V_{k,J}^\dagger\|(1+\|E_{I,J}\|\|[\widetilde{U}]_\tau^\dagger\|)+\|[\widetilde{U}]_\tau^\dagger\|\|E_I\|.
\end{eqnarray*}
\end{proof}

\begin{proof}[Proof of Theorem \ref{THM:CURTilde}]
Apply the conclusion of Lemma \ref{LEM:PBTerms0} to Proposition \ref{PROP:Perturbation1} and collect terms.
\end{proof}

\subsection{Proofs for Section \ref{SEC:CUkR}}
   By modifying the proof of Proposition \ref{PROP:Perturbation1} and Lemma \ref{LEM:PBTerms0}, we arrive at the following.      

\begin{proposition}\label{PROP:PB}
Let $\widetilde{U}_k$ be the best rank-$k$ approximation of $\widetilde{U}$.  Then 
\begin{multline*}
    \|A-\widetilde{C}\widetilde{U}_k^\dagger\widetilde{R}\| \leq  \|W_{k,I}^\dagger\|\|E_I\|+\|V_{k,J}^\dagger\|\|E_J\|+4\|W_{k,I}^\dagger\|\|V_{k,J}^\dagger\|\|E_{I,J}\|\\
     +  \|\widetilde{U}_k^\dagger\|\left[\left(\|W_{k,I}^\dagger\|\|E_I\|+\|V_{k,J}^\dagger\|\|E_J\|+\|W_{k,I}^\dagger\|\|V_{k,J}^\dagger\|\|E_{I,J}\|\right)\|E_{I,J}\|+\|E_I\|\|E_J\|  \right].
\end{multline*}
\end{proposition}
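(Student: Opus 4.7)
The plan is to mimic the proofs of Proposition \ref{PROP:Perturbation1} and Lemma \ref{LEM:PBTerms0}, replacing $[\widetilde{U}]_\tau$ throughout by $\widetilde{U}_k$. Two facts about the best rank-$k$ approximation enable this substitution. First, writing the SVD $\widetilde{U} = \sum_i \sigma_i(\widetilde{U}) u_i v_i^*$, one checks directly that $\widetilde{U}\widetilde{U}_k^\dagger = \widetilde{U}_k\widetilde{U}_k^\dagger$ and $\widetilde{U}_k^\dagger\widetilde{U} = \widetilde{U}_k^\dagger\widetilde{U}_k$ are orthogonal projections onto the top-$k$ singular subspaces, so their spectral norms are at most $1$, and in particular $\widetilde{U}\widetilde{U}_k^\dagger\widetilde{U} = \widetilde{U}_k$. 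Second, since $U$ has rank $k$, the Mirsky/Eckart--Young inequality for unitarily invariant norms gives $\|\widetilde{U} - \widetilde{U}_k\| \leq \|\widetilde{U} - U\| = \|E_{I,J}\|$, so by the triangle inequality $\|\widetilde{U}_k - U\| \leq 2\|E_{I,J}\|$.

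With these tools in hand, I would first apply the same triangle-inequality split as in Proposition \ref{PROP:Perturbation1}: since $A = CU^\dagger R$ by Theorem \ref{THM:Characterization},
\[
\|A - \widetilde{C}\widetilde{U}_k^\dagger\widetilde{R}\| \leq \|CU^\dagger R - C\widetilde{U}_k^\dagger R\| + \|C\widetilde{U}_k^\dagger\|\,\|E_I\| + \|\widetilde{U}_k^\dagger\widetilde{R}\|\,\|E_J\|.
\]
For the first summand I would use $C = CU^\dagger U$ and $R = UU^\dagger R$ (Lemma \ref{LEM:Projections}), substitute $U = \widetilde{U} - E_{I,J}$ on both sides, and expand the resulting product of four terms exactly as in \eqref{EQCURDiff1}. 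The identity $\widetilde{U}\widetilde{U}_k^\dagger\widetilde{U} = \widetilde{U}_k$ collapses the leading piece to $CU^\dagger(U - \widetilde{U}_k)U^\dagger R$, while the projection bounds $\|\widetilde{U}\widetilde{U}_k^\dagger\|_2, \|\widetilde{U}_k^\dagger\widetilde{U}\|_2 \leq 1$ control the two cross terms. Together with Proposition \ref{PROP:NormTerms} and $\|\widetilde{U}_k - U\| \leq 2\|E_{I,J}\|$ this produces
\[
\|CU^\dagger R - C\widetilde{U}_k^\dagger R\| \leq \|W_{k,I}^\dagger\|\,\|V_{k,J}^\dagger\|\bigl(4\|E_{I,J}\| + \|\widetilde{U}_k^\dagger\|\,\|E_{I,J}\|^2\bigr),
\]
which accounts for the coefficient $4$ (the original $2$ from Proposition \ref{PROP:Perturbation1} plus $2$ from the new estimate on $\|\widetilde{U}_k - U\|$).

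For the two remaining summands I would adapt Lemma \ref{LEM:PBTerms0}, simply repeating its short calculation with $\widetilde{U}_k$ in place of $[\widetilde{U}]_\tau$; the only property of $[\widetilde{U}]_\tau$ used there is that $\widetilde{U}[\widetilde{U}]_\tau^\dagger$ and $[\widetilde{U}]_\tau^\dagger\widetilde{U}$ are contractive in spectral norm, which we have already verified for $\widetilde{U}_k$. This yields $\|C\widetilde{U}_k^\dagger\| \leq \|W_{k,I}^\dagger\|(1 + \|E_{I,J}\|\,\|\widetilde{U}_k^\dagger\|)$ and $\|\widetilde{U}_k^\dagger\widetilde{R}\| \leq \|V_{k,J}^\dagger\|(1 + \|E_{I,J}\|\,\|\widetilde{U}_k^\dagger\|) + \|\widetilde{U}_k^\dagger\|\,\|E_I\|$. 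Substituting these into the decomposition above and collecting first-order and second-order terms in the noise reproduces the stated bound. The main subtlety to check carefully is the identity $\widetilde{U}\widetilde{U}_k^\dagger\widetilde{U} = \widetilde{U}_k$, since $\widetilde{U}_k$ is not the $\tau$-thresholded version of $\widetilde{U}$; once this and the Mirsky estimate $\|\widetilde{U} - \widetilde{U}_k\| \leq \|E_{I,J}\|$ are in place, the rest of the argument is bookkeeping via triangle inequalities and submultiplicativity of the norm.
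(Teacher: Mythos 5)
Your proposal is correct and follows essentially the same route as the paper, which proves Proposition \ref{PROP:PB} precisely by rerunning the arguments of Proposition \ref{PROP:Perturbation1} and Lemma \ref{LEM:PBTerms0} with $\widetilde{U}_k$ in place of $[\widetilde{U}]_\tau$, using the projection identities $\widetilde{U}\widetilde{U}_k^\dagger\widetilde{U}=\widetilde{U}_k$ and the Mirsky/Eckart--Young bound $\|\widetilde{U}-\widetilde{U}_k\|\leq\|E_{I,J}\|$ to obtain $\|\widetilde{U}_k-U\|\leq 2\|E_{I,J}\|$ and hence the coefficient $4$. In fact you supply more detail than the paper does, which simply asserts the modification.
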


The presence of terms depending on $\widetilde{U}$ in the error bounds above are undesirable, so we now are tasked with estimating them.  Before stating the final bound, we estimate some of the terms specifically in the following lemma.

\begin{lemma}\label{LEM:PBTerms}
Let $\mu\in[1,3]$ be the quantity given by Theorem \ref{THM:Stewart} ($\mu$ depends on the norm $\|\cdot\|$ chosen). Provided $\sigma_k(U)>2\mu\|E_{I,J}\|$, the following estimate holds:
\[ \|\widetilde{U}_k^\dagger\|\leq\dfrac{\|U^\dagger\|}{1-2\mu\|U^\dagger\|_2\|E_{I,J}\|}.\]
\end{lemma}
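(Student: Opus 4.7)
The plan is to view $\widetilde{U}_k$ as a perturbation of $U$ (rather than as the truncation of $\widetilde{U}$) and then invoke Corollary \ref{COR:AdaggerBounds}. Since $U$ has rank exactly $k$ by the standing assumptions, the two matrices $U$ and $\widetilde{U}_k$ are both candidates for rank-$k$ matrices and the corollary applies cleanly once I establish (a) that $\widetilde{U}_k$ actually has rank $k$, and (b) a bound on $\|\widetilde{U}_k-U\|$ in terms of $\|E_{I,J}\|$.

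For (b), the key observation is that Mirsky's theorem (the equality case of \eqref{EQ:Mirsky}) implies that the truncated SVD furnishes the best rank-$k$ approximation of $\widetilde{U}$ in every unitarily invariant norm. Since $U$ is a rank-$k$ competitor, this gives $\|\widetilde{U}_k-\widetilde{U}\| \leq \|U-\widetilde{U}\| = \|E_{I,J}\|$, and then the triangle inequality yields
\[ \|\widetilde{U}_k - U\| \leq \|\widetilde{U}_k - \widetilde{U}\| + \|\widetilde{U}-U\| \leq 2\|E_{I,J}\|. \]
For (a), by Weyl's inequality (Theorem \ref{THMHoffmanWielandt}) applied to $\widetilde{U}=U+E_{I,J}$, we have $\sigma_k(\widetilde{U}) \geq \sigma_k(U) - \|E_{I,J}\|_2$. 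Using $\|E_{I,J}\|_2 \leq \|E_{I,J}\|$ together with the hypothesis $\sigma_k(U) > 2\mu\|E_{I,J}\|$ and $\mu \geq 1$, this quantity is strictly positive, so $\widetilde{U}$ has rank at least $k$ and consequently $\widetilde{U}_k$ has rank exactly $k$.

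With both matrices of rank $k$ and $\|\widetilde{U}_k - U\| \leq 2\|E_{I,J}\|$, the hypothesis $\sigma_k(U) > 2\mu\|E_{I,J}\|$ guarantees $\sigma_k(U) > \mu\|\widetilde{U}_k - U\|$, which is precisely what the second inequality of Corollary \ref{COR:AdaggerBounds} requires (applied with $B=U$ and $\widetilde{B}=\widetilde{U}_k$, noting $\|U^\dagger\|_2 = 1/\sigma_k(U)$). That corollary then delivers
\[ \|\widetilde{U}_k^\dagger\| \leq \frac{\|U^\dagger\|}{1 - \mu\|U^\dagger\|_2\|\widetilde{U}_k - U\|} \leq \frac{\|U^\dagger\|}{1 - 2\mu\|U^\dagger\|_2\|E_{I,J}\|}, \]
where the second inequality uses monotonicity of $x \mapsto 1/(1-x)$ on $(-\infty,1)$ together with positivity of both denominators (also guaranteed by the hypothesis).

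The only non-routine step is recognizing that $\widetilde{U}_k$ is most profitably compared with $U$ directly, bypassing $\widetilde{U}$ altogether; the temptation is to estimate $\|\widetilde{U}_k^\dagger\|$ in terms of $\|\widetilde{U}^\dagger\|$ via Stewart's theorem applied to the rank-changing pair $(\widetilde{U},\widetilde{U}_k)$, but that route invokes the weaker rank-mismatch case of Theorem \ref{THM:Stewart} and yields a messier bound. The triangle-inequality trick together with Mirsky's optimality cleanly reduces the problem to the rank-preserving case, which is the main (and essentially only) insight required.
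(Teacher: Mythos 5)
Your proof is correct and follows essentially the same route as the paper's: bound $\|\widetilde{U}_k-U\|\leq 2\|E_{I,J}\|$ via the triangle inequality through $\widetilde{U}$ together with Mirsky's theorem (best rank-$k$ approximation), verify $\rank(\widetilde{U}_k)=k$ via Weyl, and then apply Corollary \ref{COR:AdaggerBounds} to the rank-preserving pair $(U,\widetilde{U}_k)$. The paper's proof is identical in substance, differing only in that it records the rank check as a closing remark rather than up front.
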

\begin{proof}
Note that $\widetilde{U}_k = U + (\widetilde{U}_k-U)$, and notice that $\|U-\widetilde{U}_k\|\leq\|U-\widetilde{U}\|+\|\widetilde{U}-\widetilde{U}_k\|$, where the first term is equal to $\|E_{I,J}\|$ by definition, and the second satisfies $\|\widetilde{U}-\widetilde{U}_k\|\leq\|E_{I,J}\|$ by Mirsky's Theorem.  Hence $\|U-\widetilde{U}_k\|\leq2\|E_{I,J}\|$.  Using this estimate, we see that if $\sigma_k(U)>2\mu\|E_{I,J}\|\geq\mu\|\widetilde{U}_k-U\|$, then by Corollary \ref{COR:AdaggerBounds}, 
\[ \|\widetilde{U}_k^\dagger\|\leq \dfrac{\|U^\dagger\|}{1-\mu\|U^\dagger\|_2\|\widetilde{U}_k-U\|}\leq \dfrac{\|U^\dagger\|}{1-2\mu\|U^\dagger\|_2\|E_{I,J}\|},\] which is the desired conclusion. {\color{black} Note that the use of Corollary \ref{COR:AdaggerBounds} requires that $\rank(\widetilde{U})\geq k$, but this is implied by the condition relating $\sigma_k(U)$ and $\|E_{I,J}\|$.  Indeed, by Weyl's inequality and this assumption, we have $\sigma_k(\widetilde{U})>(2\mu-1)\|E_{I,J}\|\geq0$.   }
\end{proof}

\begin{proof}[Proof of Theorem \ref{THM:PB}]
Recalling that $\|U-\widetilde{U}_k\|\leq2\|E_{I,J}\|$ as estimated in the proof of Lemma \ref{LEM:PBTerms}, the conclusion of the theorem follows by combining this estimate with those of Proposition \ref{PROP:PB} and Lemma \ref{LEM:PBTerms}, and rearranging terms.  
\end{proof}

\subsection{Proofs for Section \ref{SEC:CkCk}}
{\color{black}To begin, we mention the following straightforward lemma of Drineas and Ipsen.

\begin{lemma}[{\cite[Theorem 2.3]{drineas2019low}}]\label{THM:DrineasIpsen}
Let $A,E\in\K^{m\times n}$ and let $P\in\K^{m\times m}$ be an orthogonal projection ($P^2=P^*=P$). Then 
\[ \|(I-P)(A+E)\| \leq \|(I-P)A\| + \|E\|. \]
\end{lemma}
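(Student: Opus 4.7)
The plan is to proceed by a two-step reduction using linearity of matrix multiplication and the operator-norm-versus-general-norm inequality stated in Section \ref{SEC:Notation}. First, I would simply decompose
\[ (I-P)(A+E) \;=\; (I-P)A + (I-P)E, \]
and then apply the triangle inequality to the norm $\|\cdot\|$ to obtain
\[ \|(I-P)(A+E)\| \;\leq\; \|(I-P)A\| + \|(I-P)E\|. \]
This reduces the claim to controlling the second summand by $\|E\|$.

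For that second summand, the key observation is that $I-P$ is itself an orthogonal projection (since $P^2=P^*=P$ implies $(I-P)^2 = I-P$ and $(I-P)^* = I-P$), so its spectral norm satisfies $\|I-P\|_2 \leq 1$. I would then invoke the mixed inequality $\|AB\|\leq\|A\|_2\|B\|$ that was recorded in Section \ref{SEC:Notation} for any submultiplicative, unitarily invariant, normalized, uniformly generated norm, applied with $A$ replaced by $I-P$ and $B$ replaced by $E$. This gives $\|(I-P)E\|\leq\|I-P\|_2\|E\|\leq\|E\|$, and combining with the triangle inequality bound above yields the stated conclusion.

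There is no real obstacle here; the only nontrivial point is making sure to use $\|AB\|\leq\|A\|_2\|B\|$ rather than the coarser $\|AB\|\leq\|A\|\|B\|$, since the latter would introduce an extraneous factor of $\|I-P\|$ (which for a general unitarily invariant norm of a rank-$(m-k)$ projection is not bounded by $1$). With the sharper inequality, the bound is immediate and clean, and it holds uniformly across the class of norms under consideration in this paper.
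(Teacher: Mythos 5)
Your proof is correct and is precisely the elementary argument the paper implicitly relies on: the paper itself gives no proof, citing \cite{drineas2019low} and remarking only that the extension from Schatten $p$--norms over $\R$ to the full class of norms considered here ``is easily seen to hold.'' Your decomposition via linearity, the triangle inequality, and the mixed bound $\|(I-P)E\|\leq\|I-P\|_2\|E\|\leq\|E\|$ (correctly using the sharper $\|AB\|\leq\|A\|_2\|B\|$ rather than full submultiplicativity) is exactly that easy verification, so there is nothing to add.
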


Note that Lemma \ref{THM:DrineasIpsen} was stated in \cite{drineas2019low} only for $\K=\R$ and Schatten $p$--norms for integer $p$, but it is easily seen to hold for all norms of the form allowed here.}

\begin{lemma}\label{LEM:CkCkA}
The following hold:
\begin{align*}\|A-\widetilde{C}_k\widetilde{C}_k^\dagger \widetilde{A}\| &\leq 2\|E_J\|\|C^\dagger A\|+\|E\|,\\
\|A-\widetilde{A}\widetilde{R}_k^\dagger\widetilde{R}_{\color{black}k}\| &\leq 2\|E_I\|\|AR^\dagger\|+\|E\|.
\end{align*}
\end{lemma}
\begin{proof}
{\color{black}
Note that $\widetilde{C}_k\widetilde{C}_k^\dagger$ is an orthogonal projection, and since $\widetilde{C} = C+E_J$, applying Lemma \ref{THM:DrineasIpsen} directly gives
\begin{equation}\label{EQN:ICkCktilde} \|(I-\widetilde{C}_k\widetilde{C}_k^\dagger)C\| = \|(I-\widetilde{C}_k\widetilde{C}_k^\dagger)(\widetilde{C}-E_J)\|\leq \|(I-\widetilde{C}_k\widetilde{C}_k^\dagger)\widetilde{C}\|+\|E_J\|. \end{equation}
Now there are two cases to consider; if $\rank(\widetilde{C}_k)<k$ (e.g., if $\rank(\widetilde{C})<k$), then $\widetilde{C}_k=\widetilde{C}$, and the right-hand side of \eqref{EQN:ICkCktilde} is simply $\|E_J\|$; on the other hand, if $\rank(\widetilde{C}_k)=k$, then the right-hand side of \eqref{EQN:ICkCktilde} is
\[ \sigma_{k+1}(\widetilde{C}) + \|E_J\| \leq \sigma_{k+1}(C)+2\|E_J\| = 2\|E_J\|\]
by Weyl's inequality (Theorem \ref{THMHoffmanWielandt}).

Now $A=CC^\dagger A$ since $\rank(C)=\rank(A)=k$, and we have
\begin{eqnarray*}
\| A-\widetilde{C}_k\widetilde{C}_k^\dagger \widetilde{A}\|&\leq&\|A-\widetilde{C}_k\widetilde{C}_k^\dagger A\|+\|E\|\\
&=&\|(I-\widetilde{C}_k\widetilde{C}_k^\dagger)CC^\dagger A\|+\|E\|\\
&\leq&2\|E_J\|\|C^\dagger A\|+\|E\|.
\end{eqnarray*}
The second stated inequality follows from the same argument \textit{mutatis mudandis}.}
\end{proof}

\begin{proof}[Proof of Theorem \ref{THM:CkARk}]
Mimic the proof of Theorem \ref{THM:ErrFApp} while applying Lemma \ref{LEM:CkCkA}.
\end{proof}

\section{How to enforce the rank?}\label{SEC:Rank}

Our previous analysis illustrated two natural ways to enforce the rank of CUR approximations; namely, $\widetilde{C}\widetilde{U}_k^\dagger\widetilde{R}$ and $\widetilde{C}_k\widetilde{C}_k^\dagger\widetilde{A}\widetilde{R}_k^\dagger\widetilde{R}_k$.  The first has been utilized in the special case of the CUR approximation called the Nystr\"{o}m method \cite{GittensMahoney}, which is when $A$ is symmetric positive semi-definite (SPSD) and the same columns and rows are chosen (i.e., $A\approx CU^\dagger C^*$).  It has recently been suggested by some authors that a better way to enforce the rank would be to consider $(CU^\dagger R)_k$, which means to make the CUR approximation suggested by Theorem \ref{THM:Characterization}(ii), and then take its best rank $k$ approximation \cite{BeckerNystrom,TroppNystrom}.


In particular, Pourkamali-Anaraki and Becker \cite{BeckerNystrom}, Tropp et al.~\cite{TroppNystrom}, and Wang et al.~\cite{Wang2019} have discussed that when approximating a SPSD matrix $K$ using the Nystr\"{o}m method, it is better to enforce the rank \textit{after} forming the approximation rather than during the process.  Specifically, Pourkamali-Anaraki and Becker \cite{BeckerNystrom} show that  
\begin{equation}\label{EQ:Becker} \|K- (CU^\dagger C^*)_r\|_\ast\leq \|K-CU_r^\dagger C^*\|_\ast. \end{equation}
Here $\|\cdot\|_*$ is the nuclear norm, which is the Schatten 1--norm.

However, it turns out that this can fail to be true in \textit{every} Schatten $p$--norm for non-SPSD matrices.  Indeed consider the modified version of Example 2 of \cite{BeckerNystrom}, and let

\[A = \begin{bmatrix} -1 & 0 & 10\\ 0 & 1+\eps & 0\\ 10 & 0 & 100\end{bmatrix}, \]
and let $C$ be the first two columns of $A$, $R$ be the first two rows of $A$, and {\color{black}$U$} be their intersection (i.e. $U=A(1:2,1:2)$).
Clearly
\[ U_1 = \begin{bmatrix} 0 & 0\\ 0 & 1+\eps\\\end{bmatrix}, \quad\textnormal{whence}\quad U_1^\dagger = \begin{bmatrix} 0 & 0 \\ 0 & \frac{1}{1+\eps}\\\end{bmatrix}.  \]

We then have that
\[ CU_1^\dagger R = \begin{bmatrix} 0 & 0 & 0 \\ 0 & 1+\eps & 0\\ 0 & 0 & 0\\\end{bmatrix}  .\]

On the other hand,
\[ CU^\dagger R = \begin{bmatrix} -1 & 0 & 10\\ 0 & 1+\eps & 0\\ 10 & 0 & -100\\\end{bmatrix}, \]
and
\[(CU^\dagger R)_1 = \begin{bmatrix} -1 & 0 & 10\\ 0 & 0 & 0\\ 10 & 0 & -100\\\end{bmatrix}. \]

Thus 
\[ A-CU_1^\dagger R = \begin{bmatrix}  -1 & 0 & 10\\ 0 & 0 & 0\\ 10 & 0 & 100\\\end{bmatrix}, \]
but
\[A - (CU^\dagger R)_1 = \begin{bmatrix} 0 & 0 & 0\\ 0 & 1+\eps & 0\\ 0 & 0 & 200\\\end{bmatrix}. \]

The spectrum of $A-CU_1^\dagger R$ is approximately $(100.9806, 1.9806, 0)$, but the spectrum of $A-(CU^\dagger R)_1$ is $(200, 1+\eps,0)$.

For any $p\in[1,\infty]$, the Schatten $p$--norm of the first approximation is thus in the interval $[100,103]$, whereas for every $\eps\in(0,1)$, the Schatten $p$--norm of the latter approximation lies in $[200,202]$.  In particular, the analogue of \eqref{EQ:Becker} does not hold for CUR approximations of non-SPSD matrices in general.

Note this is not a universal phenomenon. For matrices which are small random perturbations of SPSD matrices, the inequality \eqref{EQ:Becker} may be valid for certain CUR decompositions, i.e., certain choices of columns and rows.

\section{Numerical Simulations}\label{SEC:Experiments}

In this section, we compare the performance of various rank-enforcement methods for different structured matrices, e.g., SPSD, symmetric matrices, general random matrices, real data matrices from the Hopkins155 motion segmentation data set \cite{Hopkins}, and structured function-related matrices. 

\subsection{Random Matrix Experiments}

\begin{experiment}\label{EXP:1}
First, we examine the  performances of the various rank-enforcement methods for an SPSD matrix which is corrupted by noise which is SPSD (the easiest case) and noise which is symmetric but not positive semi-definite.  The purpose of this basic experiment is to test how sensitive the bound of Pourkamali-Anaraki and Becker \eqref{EQ:Becker} is to perturbations.  We first generate an SPSD matrix $A\in\R^{100\times 100}$ with rank $8$ {\color{black}(in particular, $A = G_8G_8^T$ where $G$ is a random Gaussian matrix and $G_8$ its truncated SVD).}  $A$ is then perturbed by an SPSD noise matrix $E=HH^T$ or a merely symmetric noise matrix $E = (H+H^T)/2$ where $H$ is a Gaussian random matrix whose entries are 0 mean and have standard deviation $\sigma = 10^{-3}$ (experiments with other noise levels are not shown, but the qualitative behavior is the same).  
We sample $x$ columns of $\widetilde{A}$ uniformly with replacement to form $\widetilde{C}=\widetilde{A}(:,J)$, and we allow $x$ to vary from $8$ to $60$ (this is not the optimal sampling method for general matrices, but for random matrices uniform sampling suffices to illustrate the behavior of the different methods). Because of the symmetry of the problem, we set $\widetilde R=\widetilde{C}^T$ and thus $\widetilde U=\widetilde{A}(J,J)$.   In this and subsequent experiments, we then compute the following relative errors (the norm changes from experiment to experiment and is specified for each):  
\begin{itemize}
    \item $\|A- (\widetilde{C}\widetilde{U}^\dagger \widetilde{R})_k\|/\|A\|$
    \item $\|A-\widetilde{C}\widetilde{U}_k^\dagger\widetilde{R}\|/\|A\|$
    \item $\|A-\widetilde{C}_k\widetilde{C}_k^\dagger \widetilde{A}\widetilde{R}_k^\dagger\widetilde{R}_k\|/\|A\|$
    \item $\|A-\widetilde{A}_k\|/\|A\|$.
\end{itemize}

In each case, $k$ is taken to be $8$, the known underlying rank of $A$.  The column sampling procedure is repeated 20 times so that there are 20 distinct CUR approximations of each kind for each value of $x$.  Figure \ref{FIG:RE_SPD_SPD}(left) shows the averaged errors (over the 20 choices of $\widetilde{C}$ and $\widetilde{U}$) versus the number of columns for SPSD noise, and Figure \ref{FIG:RE_SPD_SPD}(right) shows the results when the random noise is not SPSD.  Error bars show the range of minimum to maximum relative error over the 20 trials.
\begin{figure}[h!]
    \centering
		\includegraphics[width=0.45\linewidth]{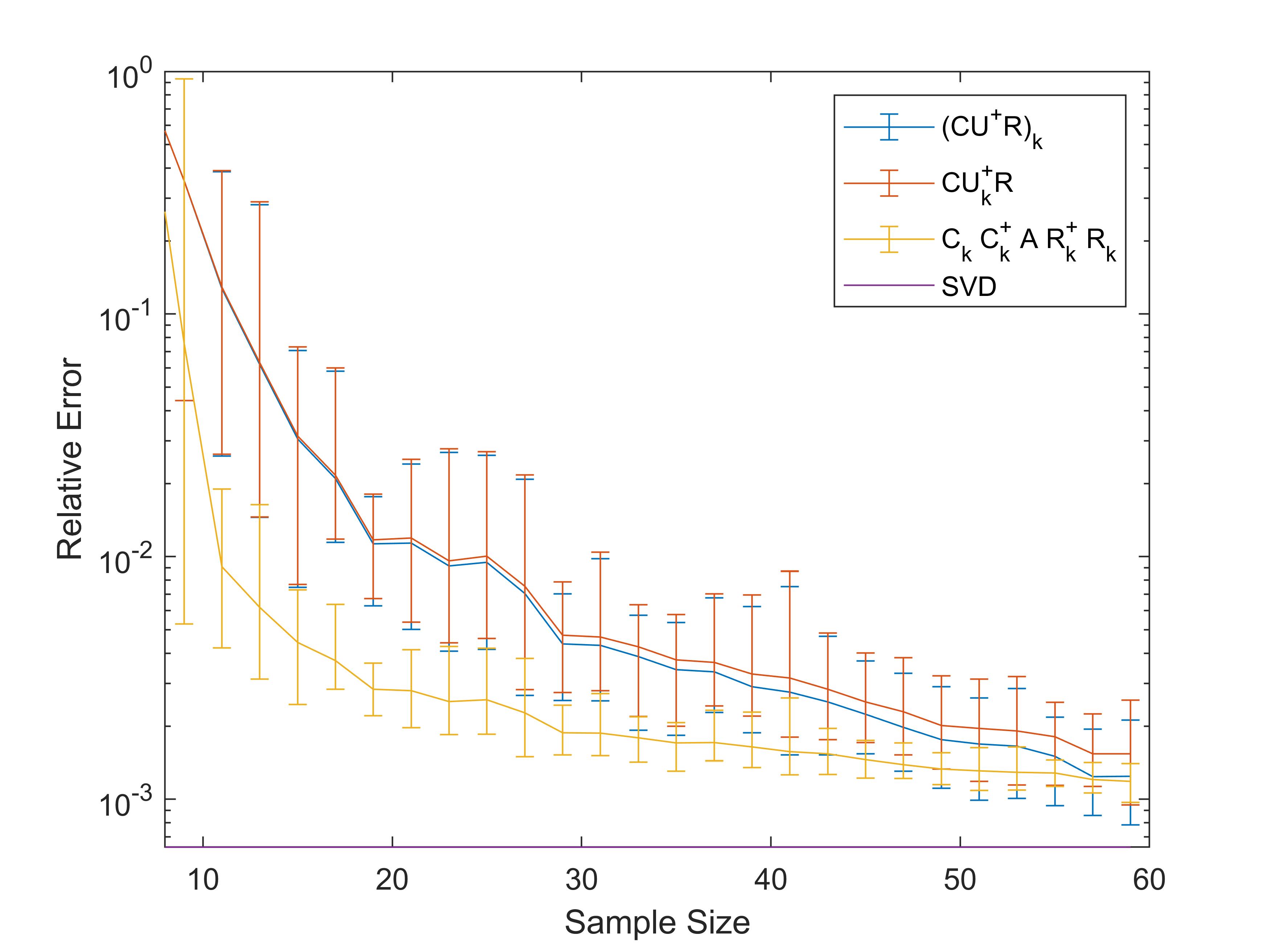}\;
		\includegraphics[width=0.45\linewidth]{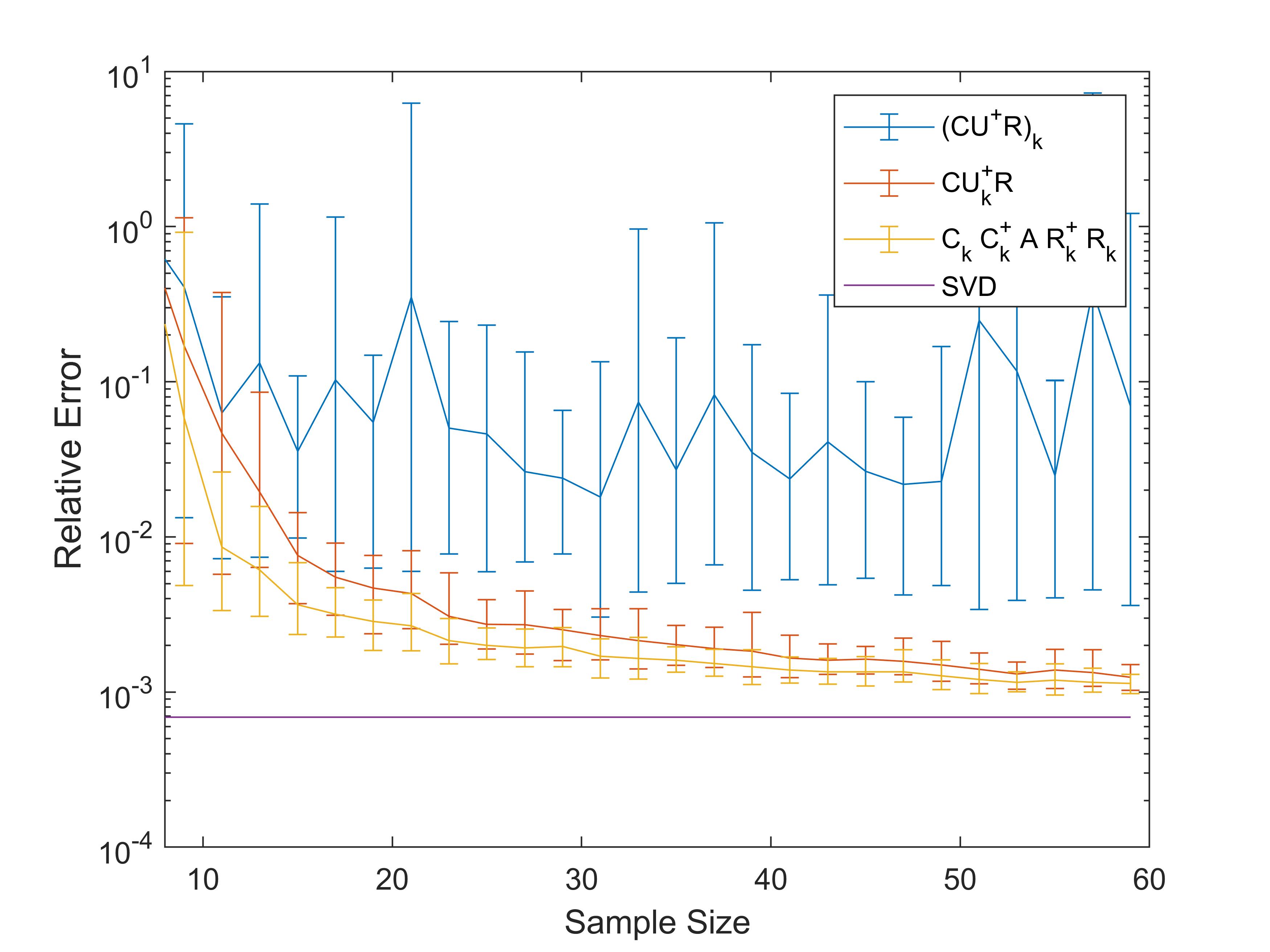}
    \caption{The performance of each rank-enforcement method for an SPSD matrix of rank $8$ plus SPSD random noise (left) and symmetric but non-PSD noise (right) with standard deviation $10^{-3}$; the plot shows relative error in the nuclear norm averaged over 20 trials vs. the number of columns selected.}\label{FIG:RE_SPD_SPD}
\end{figure}

{\color{black}The main conclusions of interest from Figure \ref{FIG:RE_SPD_SPD} are that 1) the low-rank projection based approximation $A\approx \widetilde{C}_k\widetilde{C_k}^\dagger\widetilde{A}(\widetilde{C}_k^T)^\dagger\widetilde{C}_k^T$ can yield better approximation than $A\approx (\widetilde{C}\widetilde{U}^\dagger\widetilde{C}^T)_k$ in some cases (the next experiment shows this is not always true), and 2) the better performance of the approximation $A\approx (\widetilde{C}\widetilde{U}^\dagger\widetilde{C}^T)_k$ may be highly dependent upon the SPSD structure of the underlying matrix; Figure \ref{FIG:RE_SPD_SPD}(right) shows that the performance of this approximation for an SPSD matrix with symmetric but non-PSD noise is very poor. We note for the reader that in the generic case where $A$ is a rectangular Gaussian random matrix with no other prescribed structure, the qualitative behavior of the approximations follows that of Figure \ref{FIG:RE_SPD_SPD}(right), and for brevity we do not include plots of this.}

\end{experiment}

{\color{black}
\begin{experiment} 
It is natural to ask what effect the decay of the spectrum of the underlying low-rank matrix has on the approximation. Following \cite{tropp2019streaming}, we test the different rank-enforcement methods for random matrices generated with different spectral decay (for recent work on column selection taking into account spectral decay, we refer the reader to \cite{derezinski2020improved}).  Here matrices are not SPSD.  In particular, we first generate a Gaussian random matrix $A\in\mathbb{R}^{500\times 500}$ with SVD $A=W\Sigma V^*$ and then force the matrix $\Sigma$ to have exponential or polynomial decay;  specifically 
\[\Sigma = \textbf{diag}(\underbrace{1,\cdots,1}_{10},e^{-c\cdot 11},e^{-c\cdot 12} ,\cdots,e^{-c\cdot 500})
\] or 
\[\Sigma = \textbf{diag}(\underbrace{1,\cdots,1}_{10},\frac{1}{11^c},\frac{1}{12^c},\cdots,\frac{1}{500^c}).
\]
{\color{black}Then we do the same simulations as in Experiment \ref{EXP:1}, and the results are reported in Figure \ref{FIG:RE_GM_exp} and \ref{FIG:RE_GM_pol} for various values of the parameter $c$.}


\begin{figure}[h]
    \centering
		\includegraphics[width=0.49\linewidth]{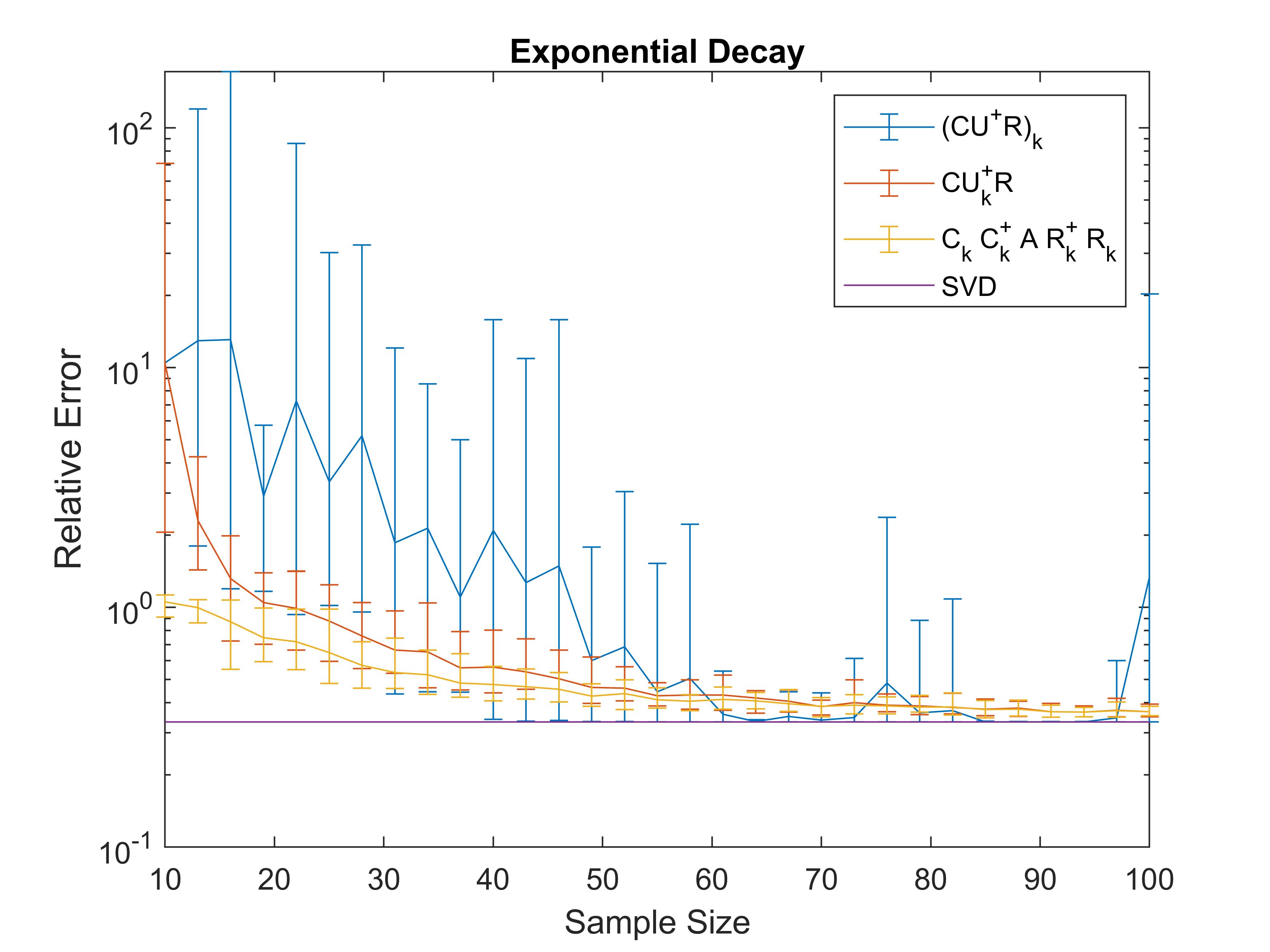}
		\includegraphics[width=0.49\linewidth]{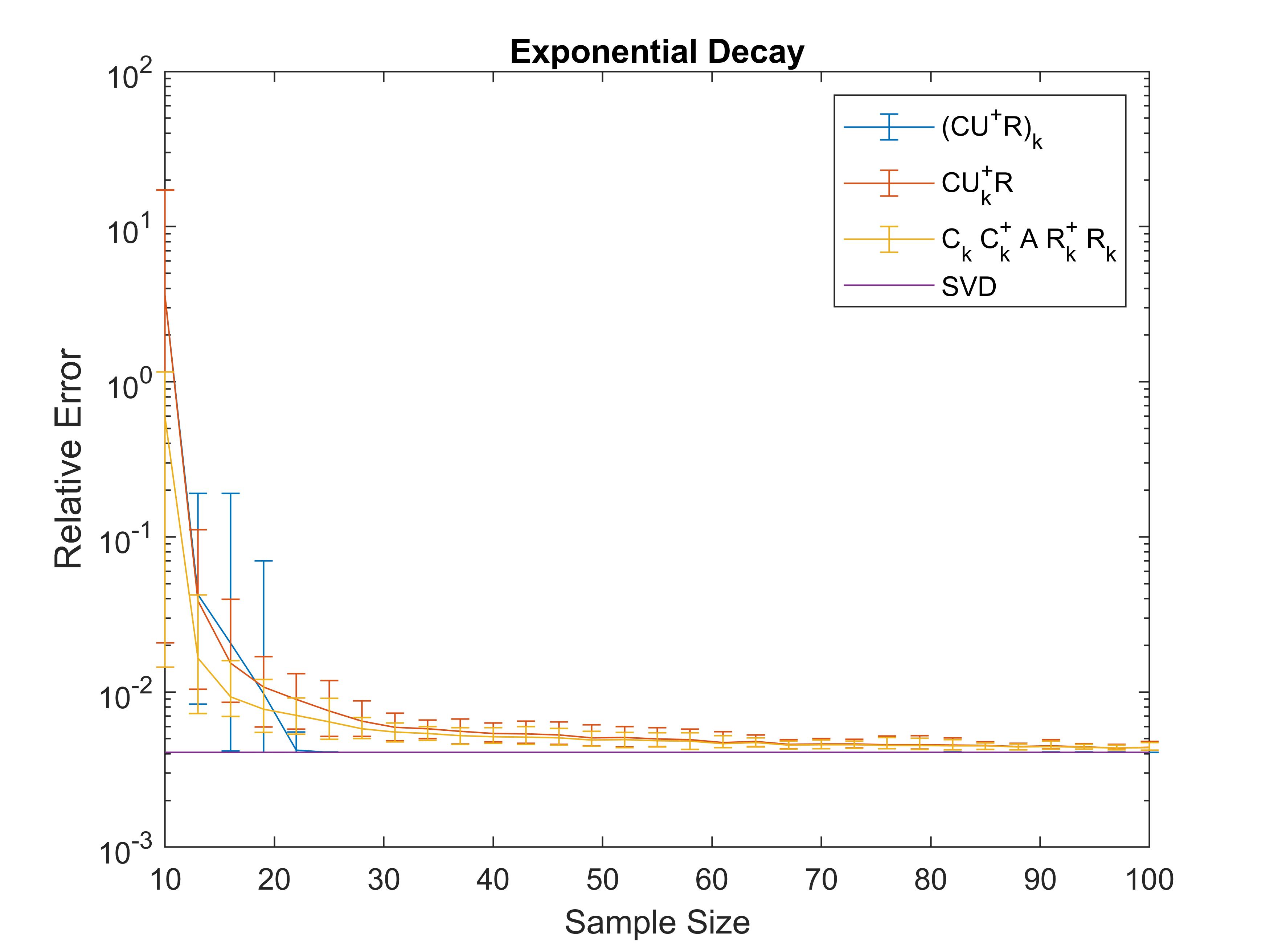}
    \caption{{\color{black}The performances of each rank-enforcement method for a random matrix whose spectrum decays exponentially with decay parameter $c=0.1$ (left) and $c=0.5$ (right).  The plot shows relative error in the spectral norm averaged over 20 trials vs. the number of columns and rows selected.}}\label{FIG:RE_GM_exp} 
\end{figure}
\begin{figure}[h]
    \centering
		\includegraphics[width=0.49\linewidth]{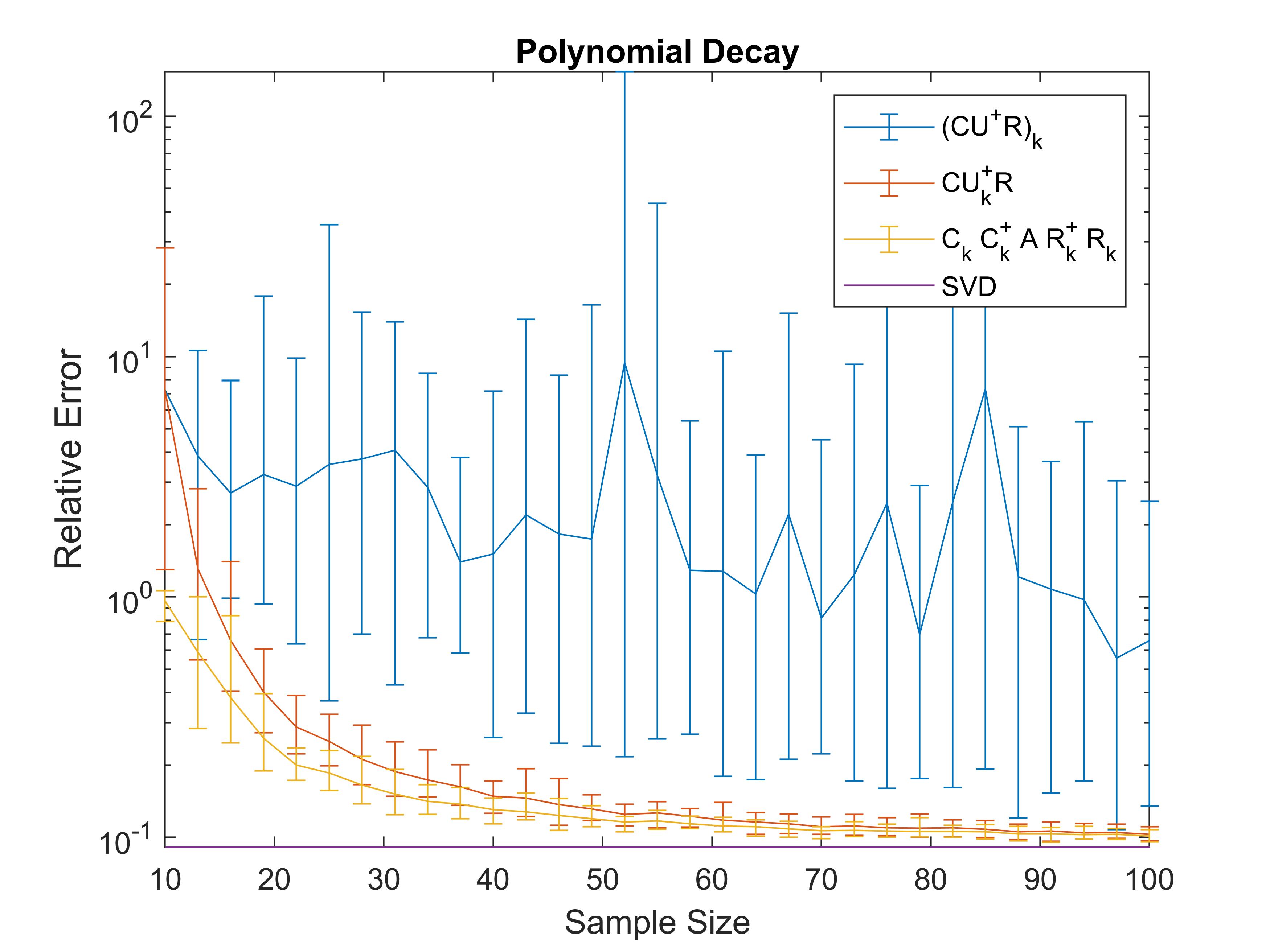}
		\includegraphics[width=0.49\linewidth]{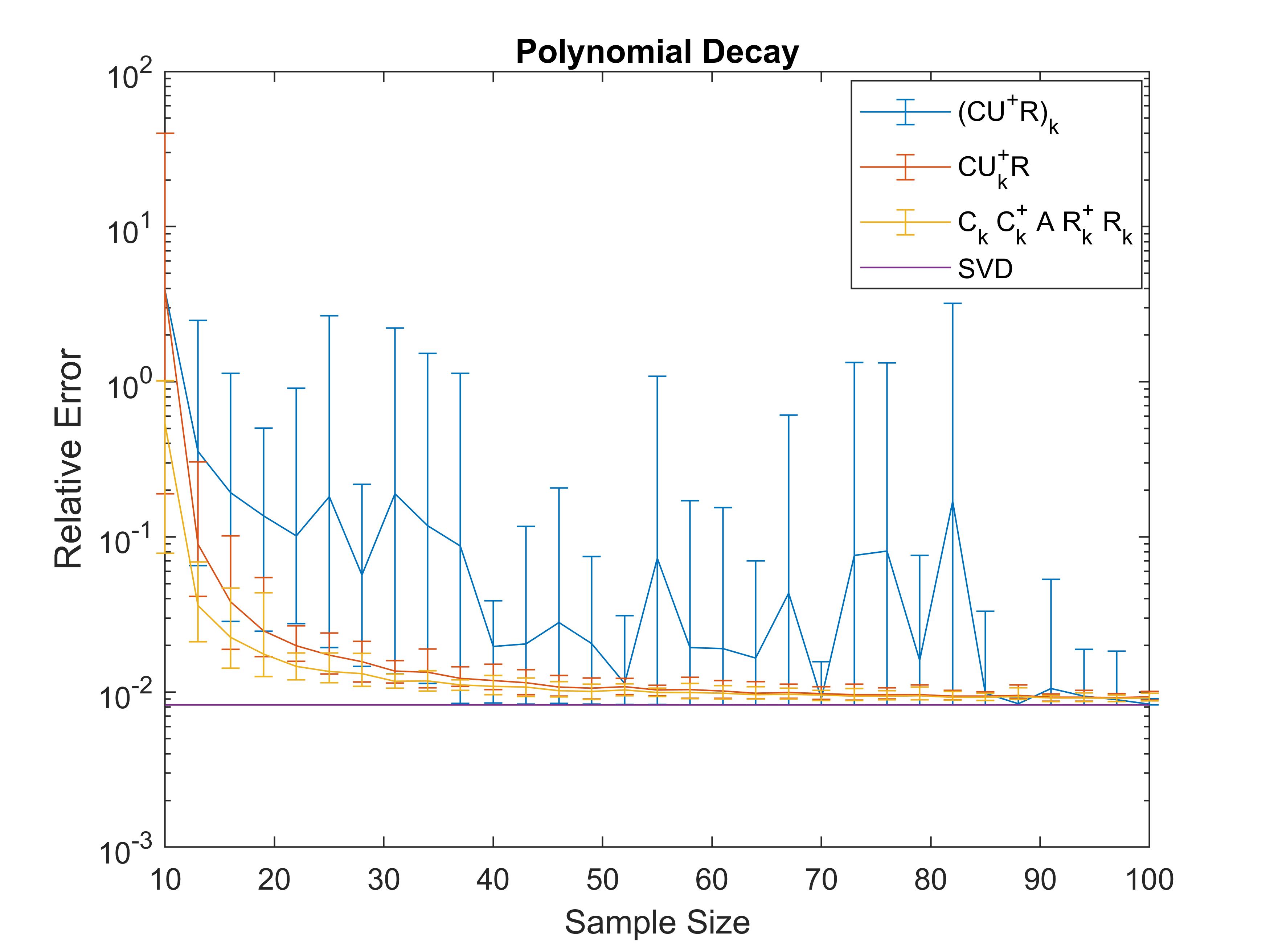}
    \caption{{\color{black}The performances of each rank-enforcement method for a random matrix whose spectrum decays polynomially with the powers $c=1$ (left) and $c=2$ (right); the plot shows relative error in the spectral norm averaged over 20 trials vs. the number of columns and rows selected.}}\label{FIG:RE_GM_pol} 
\end{figure}

Varying the exponential parameter and the power of the polynomial decay leads to qualitatively similar behavior, with faster decay of the singular values typically leading to faster decay in the relative error in terms of the sample size of columns and rows. 
\end{experiment}
}


\subsection{Deterministic and Real Data Experiments}

\begin{experiment}\label{EXP:}  In this experiment, we test the performance of the rank-enforcement methods on a deterministic matrix $B$ of size $62\times 159$, which comes from the Hopkins155 motion segmentation data set \cite{Hopkins}. The test process is the same as in Experiment \ref{EXP:1}, and the results are shown in Figure \ref{FIG:Hopkins155_GM}. 

Many times in applications, a kernel matrix (which is SPSD) is formed from the data, for example as a precursor to Spectral Clustering \cite{ng2002spectral}.  For illustration, we test the different approximation in this case, in which from $B$ above, we generate the Gaussian kernel matrix $K$ of size $159\times 159$ by setting $K_{ij}:= e^{-\|B(:,i)-B(:,j)\|^2}$. Then we repeat the process in Experiment 1 by testing the rank $40$ CUR approximation of $\widetilde A$ and choosing $x$ (the number of columns) to range from $40$ to $100$.  This value of the rank was determined empirically by analyzing the scree plot of the singular values of $K$. 

\begin{figure}[h!]
    \centering
	\includegraphics[width=.45\textwidth]{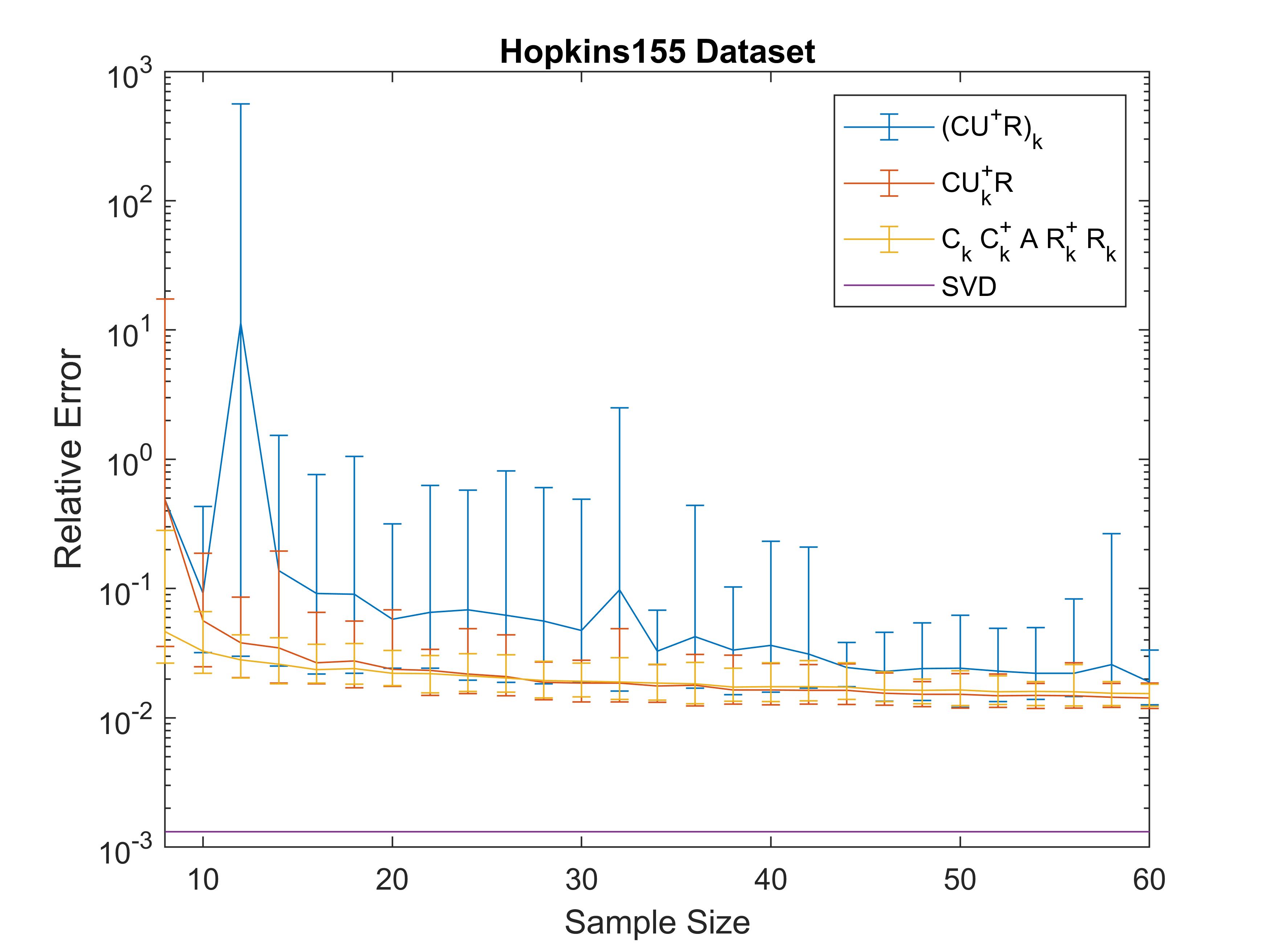}
		\includegraphics[width=0.45\linewidth]{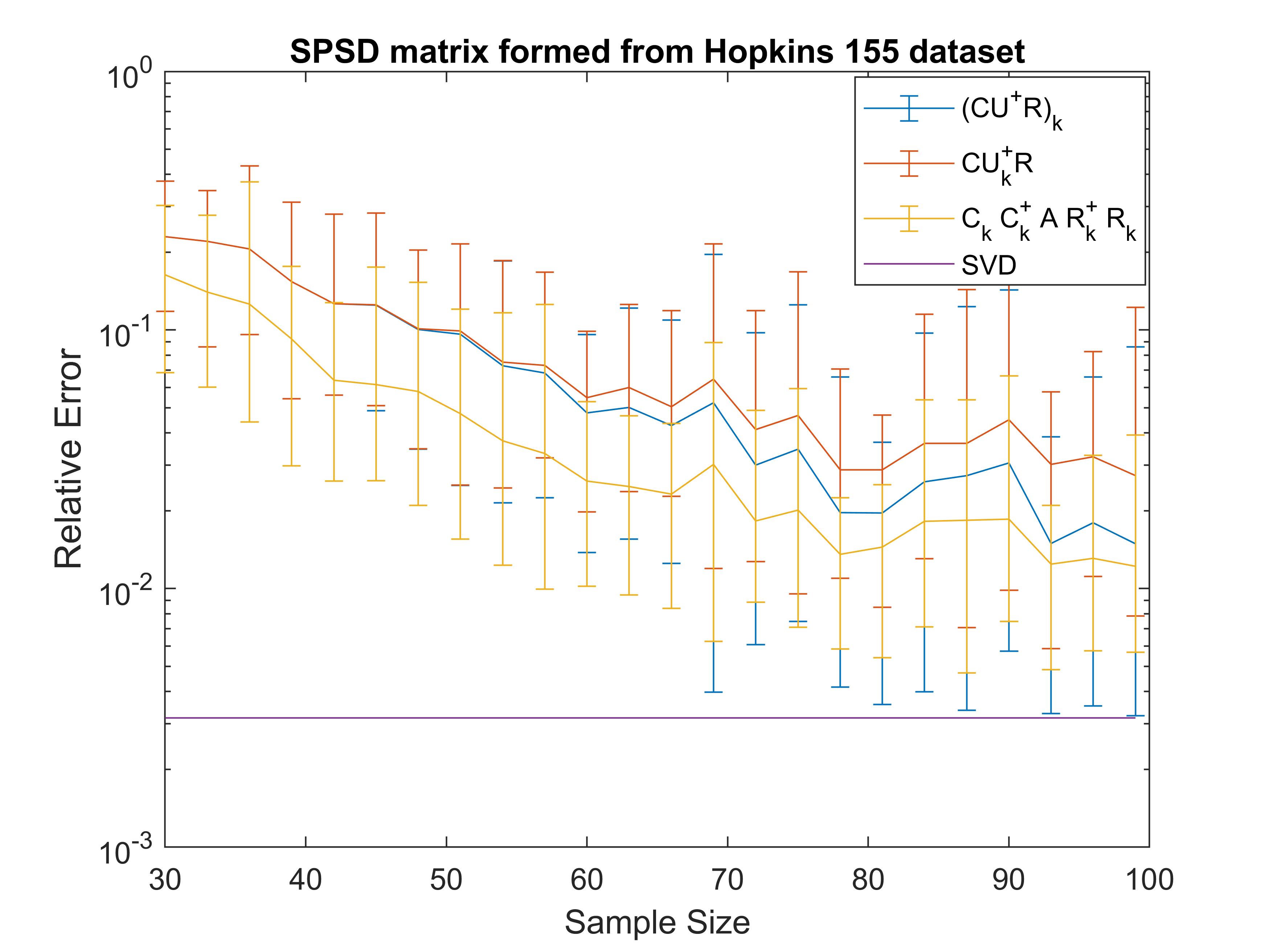}
    \caption{(Left) The rank-8 CUR approximations of the Hopkins155 data matrix showing relative error in the spectral norm vs. the number of columns and rows chosen. (Right) Error for the rank-$40$ Nystr\"{o}m approximation of the SPSD Gaussian kernel matrix $\widetilde{A}$ related to the Hopkins155 matrix; shown is relative error in the spectral norm vs. the number of columns and rows chosen.}\label{FIG:Hopkins155_GM} 
\end{figure}

{\color{black}As seen in Figure \ref{FIG:Hopkins155_GM}(left), most CUR approximations perform similarly on the raw data matrix except for that of the form $(CU^\dagger C^T)_k$ which has larger average error and variance.  For the kernel matrix (right), the rank-truncated projection method used here performs better on average than the others (though not for every instance as one sees the minimum error achieved by the approximation $(CU^\dagger C^T)_k$ nears the SVD error for large numbers of columns.  Not shown is the nuclear norm case in which
something interesting occurs; neither approximation is always better on average as they switch places in terms of performance around a choice of approximately 65 columns. We conclude that the new projection based method proposed here is neither strictly better nor worse (in either spectral or nuclear norm) than previously proposed rank-enforcement variants of the Nystr\"{o}m method.
}
\end{experiment}

\begin{experiment}\label{EXP:4}
{\color{black} To compare with other works on cross-approximation (a generalized variant of CUR) here we consider the Hilbert matrix with entries $H_{ij} = \frac{1}{i+j-1}$, which appears in various settings including classical polynomial approximation and is notoriously ill-conditioned even for small size (see \cite{choi1983tricks} for an expository article or \cite{benzi2015exploiting,tyrtyshnikov2004kronecker} for concerns closer to the current work).  We take $H$ to be of size $500\times 500$, and run essentially the same test as in Experiment \ref{EXP:1} but for different methods of column and row sampling. The primary purpose is to illustrate that the method of choosing columns and rows can have tremendous effect on the accuracy of the reconstruction in many instances; this notion has been explored in previous works (e.g., \cite{SorensenDEIMCUR,VoroninMartinsson}) but we add the additional method of Maximum Volume Sampling \cite[Algorithm 1]{mikhalev2018rectangular} of the truncated singular vectors of $H$.  Algorithm 1 of \cite{mikhalev2018rectangular} is a heuristic which attempts to find a good approximation to a submatrix of singular vectors which has the maximal volume (the volume of a rectangular matrix is the product of its singular values). The maximal volume selection scheme for CUR decompositions is described in \cite[Definition 9]{mikhalev2018rectangular}.  In addition to this method we consider sampling columns and rows from three distributions: the uniform distribution, proportional to column/row lengths, and the leverage score distribution (see \cite{hamm2019stability} for more details).

Results are shown in Figure \ref{FIG:recip_xy}, where we see that the Maximum Volume Sampling method gives more accurate and stable results than other sampling methods (with the exception that the approximation $H\approx (CU^\dagger R)_k$ behaves erratically for this method). However, selecting the rows and columns by using Maximum Volume Sampling method is computationally  expensive as it requires computing the truncated SVD of $H$ as well as the complexity of applying the Maximum Volume Sampling method on its left and right truncated singular vectors, which are $O(m|I|^2)$ and $O(n|J|^2)$, respectively.  We note that in the trials run here, the Maximal Volume Sampling method took approximately twice as long as the next most complex sampling method. Both leverage score and column/row length sampling perform decently well for sufficiently large sample size, but exhibit large variance.  It is important to note that uniform sampling performs extremely poorly on average in this case; it is known that uniform sampling provides a good CUR approximation when the singular vectors of the underlying matrix are incoherent \cite{DemanetWu}, but the matrix $H$ does not fall into this category.
} 

\begin{figure}[h!]
    \centering
	\begin{subfigure}[b]{0.49\linewidth}
		\includegraphics[width=\textwidth]{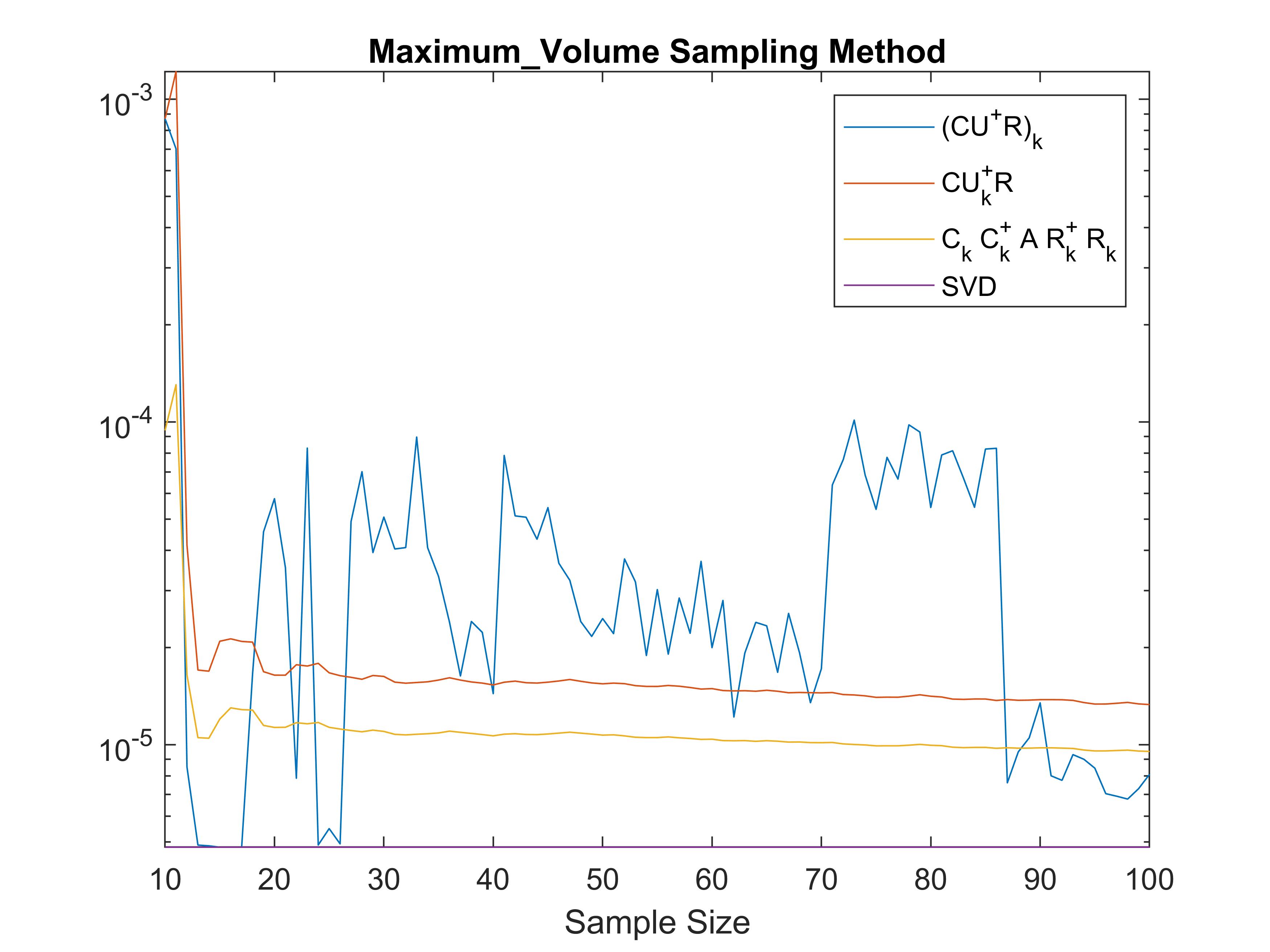}
		\caption{}
		\label{FIG:recip_xy_MV}
	\end{subfigure}
	\begin{subfigure}[b]{0.49\linewidth}
		\includegraphics[width=\textwidth]{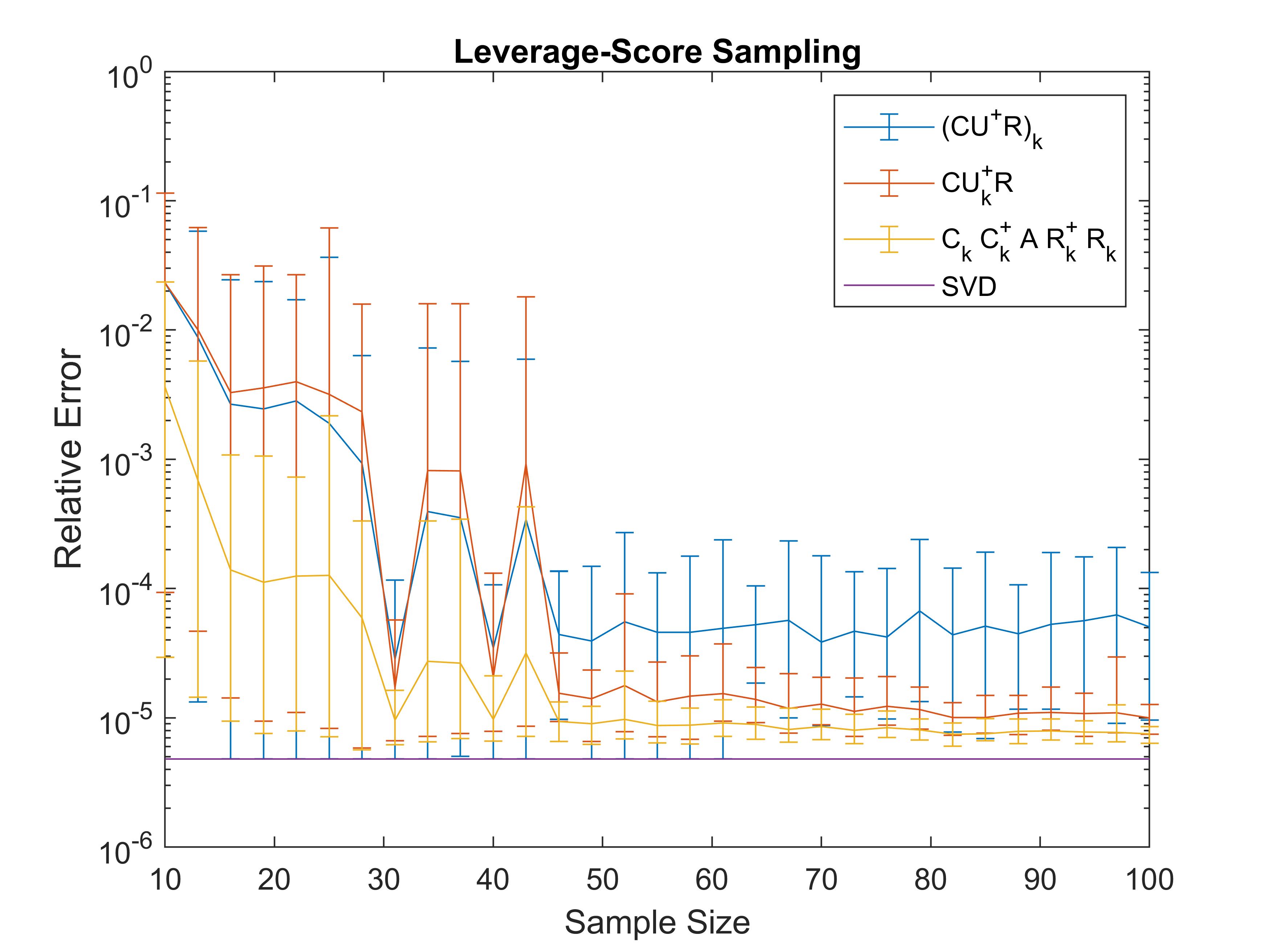}
		\caption{}
		\label{FIG:recip_xy_LS}
	\end{subfigure}
	\begin{subfigure}[b]{0.49\linewidth}
		\includegraphics[width=\textwidth]{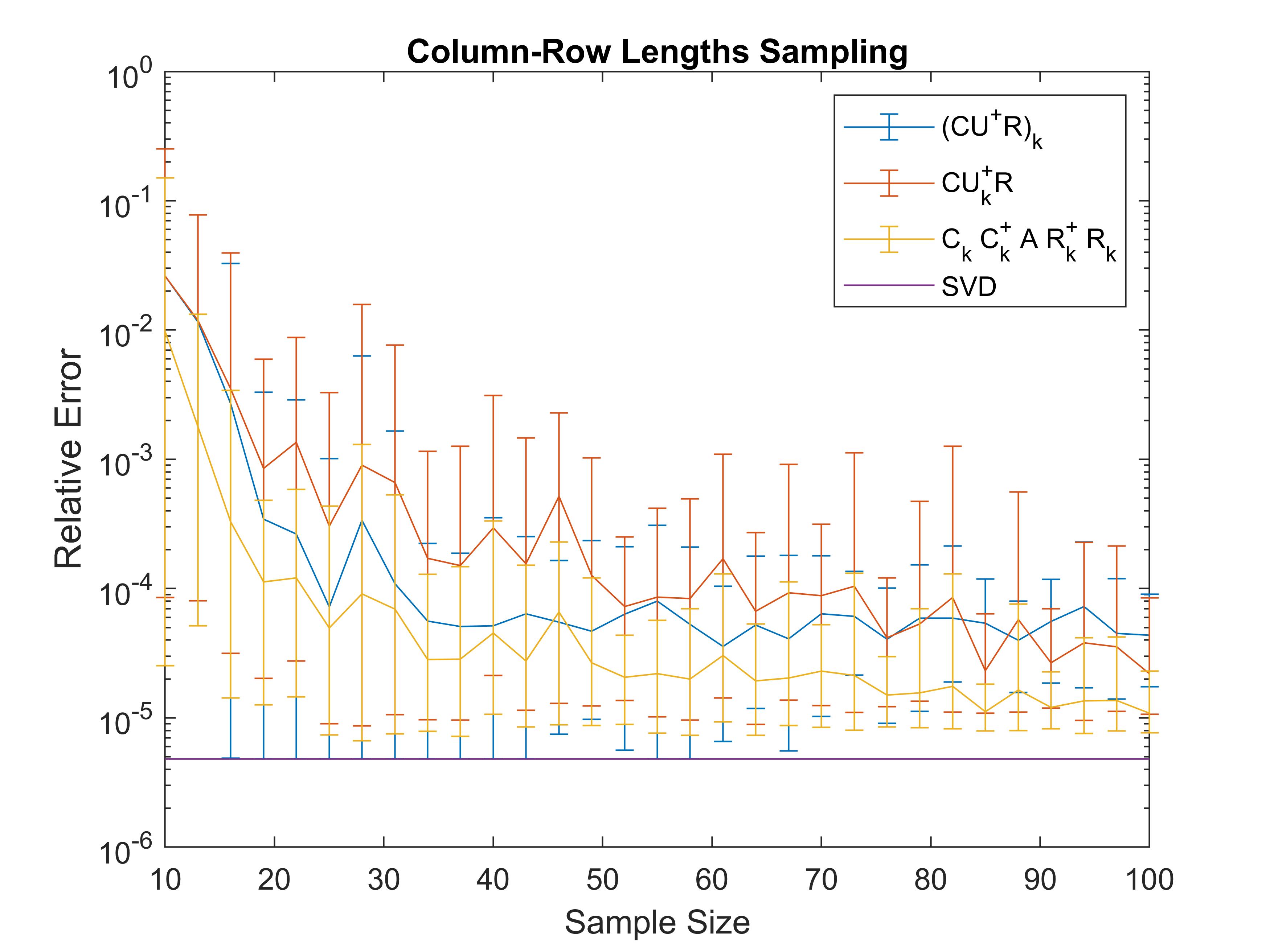}
		\caption{}
		\label{FIG:recip_xy_RCL}
	\end{subfigure}
	\begin{subfigure}[b]{0.49\linewidth}
		\includegraphics[width=\textwidth]{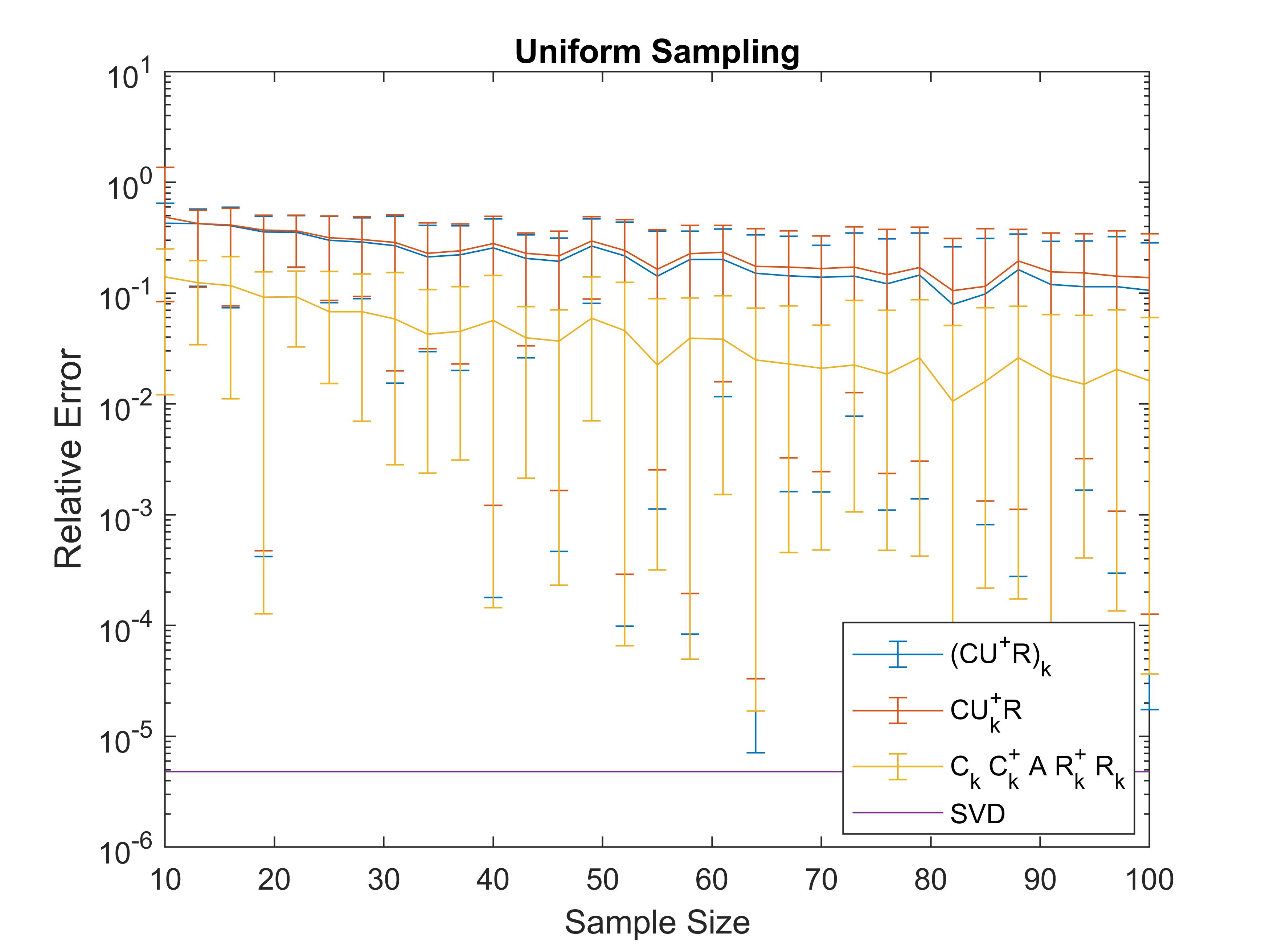}
		\caption{}
		\label{FIG:recip_xy_U}
	\end{subfigure}
    \caption{The rank-10 CUR approximations of the Hilbert matrix $H$ with $H_{ij}=\frac{1}{i+j-1}$ with different sampling patterns; the plot shows relative error in the spectral norm vs. the number of columns and rows chosen.}\label{FIG:recip_xy} 
\end{figure}

\end{experiment}

\subsection{Discussion}\label{subs:conclusion}

As seen in the experiments and figures above, the SPSD structure of matrices is crucial to the success of the rank-enforced Nystr\"{o}m method of \cite{BeckerNystrom}, i.e., of taking $A\approx (\widetilde{C}\widetilde{U}^\dagger\widetilde{C}^*)_k$ as opposed to $\widetilde{C}\widetilde{U}_k^\dagger\widetilde{C}^*$.  However, another interesting phenomenon appears, and that is that for a small oversampling of columns and rows, the new approximation introduced here of $A\approx \widetilde{C}_k\widetilde{C}_k^\dagger\widetilde{A}\widetilde{R}_k^\dagger\widetilde{R}_k$ performs better than the other rank-enforcement methods both in the SPSD and the unstructured case.  We suggest the following explanation for this: this approximation corresponds to finding the best $k$--dimensional subspace which captures the span of the columns of $\widetilde{C}$, and even when choosing few more than $k$ columns, this should be a good approximation to the span of the columns of $C$ itself.  On the other hand, the other approximations are not projections onto a $k$--dimensional subspace in the domain and range, and thus the effect of the noise on the approximation is greater.  However, as the number of columns and rows increases, the other approximations may better capture the information of $A$ by nature of better approximating the rank $k$ SVD of $\widetilde{A}$; i.e., for large $k$, $\widetilde{C}\widetilde{U}_k^\dagger\widetilde{R}\approx\widetilde{A}_k$ (this is in line with the theory known from previous works, e.g., \cite{DKMIII}; there is currently no similar theory for the projection-based approximation shown here).

{\color{black}As far as the sampling mechanism for choosing columns and rows, there is typically a tradeoff between accuracy and stability on the one hand and computational complexity on the other. Here, we see that the Maximum Volume sampling method exhibits good approximation for a small oversampling of columns and rows, whereas other sampling methods like leverage score sampling need more columns to exhibit the same accuracy, but require less computation.}

\section{Conclusion and Final Comments}\label{SEC:Conclusion}

To end, let us make some brief comments. We have provided perturbation error estimates for a variety of CUR approximation methods: estimates which hold for arbitrary matrix norms which are normalized, uniformly generated, unitarily invariant, and submultiplicative (a class which includes all Schatten $p$--norms).   Our estimates qualitatively illustrate how the column and row selections affect the error, and in particular we give some more specific bounds in the case when maximal volume submatrices are chosen. {\color{black}The estimates carried out here are of a general nature and make relatively light assumptions on the matrices involved (namely that the noise cannot be too large compared to the underlying low-rank matrix).  One can achieve better error bounds by imposing more assumptions.  In particular, assuming incoherence on the row and column spaces of $A$ can allow one to give error bounds in terms of the incoherence level as opposed to the pseudoinverse of rectangular submatrices of the singular vectors as was done here.  Additionally, if one assumes a particular method of sampling the rows and columns, then this can lead to better bounds in some instances as well.}

Due to the suggestion of other works on the Nystr\"{o}m method, we considered the effect of how the rank is enforced on CUR approximations for generic matrices, and found that, in contrast to the phenomenon observed for symmetric positive semi-definite matrices, there is no provably better way to enforce the rank for CUR approximations of arbitrary matrices.

\section*{Acknowledgements}

This research was sponsored in part by the Army Research Office and was accomplished under Grant Number W911NF-20-1-0076. The views and conclusions contained in this document are those of the authors and should not be interpreted as representing the official policies, either
expressed or implied, of the Army Research Office or the U.S. Government. The U.S. Government is authorized to reproduce and distribute reprints for Government purposes notwithstanding any copyright notation herein.
K. H. is partially supported by the NSF TRIPODS program, grant number NSF CCF--1740858.   LX.H. is partially supported by  by NSF CAREER DMS 1348721 and NSF BIGDATA 1740325.  K.H. thanks Joel Tropp, David Glickenstein, Jean-Luc Bouchot, and Vahan Huroyan for comments and suggestions on a previous version of the manuscript. {\color{black}The authors thank the anonymous reviewers for constructive feedback which helped to significantly improve the presentation of the results in this paper.}

\bibliographystyle{siamplain}
\bibliography{HammHuang}

\newpage
\appendix

\section{Proof of Proposition \ref{PROP:NormTerms}}\label{APP:ProofPROP:NormTerms}
First, note that by Proposition \ref{PROP:Udagger} and the fact that $CC^\dagger=AA^\dagger$ (Lemma \ref{LEM:Projections}), we have
\[ CU^\dagger = CC^\dagger AR^\dagger = AA^\dagger AR^\dagger = AR^\dagger,\] and likewise
\[ U^\dagger R = C^\dagger A.\]
As noted in Proposition \ref{PROP:U=RAC}, we have that \[R = W_{k}(I,:)\Sigma_{k}V_{k}^*=:W_{k,I}\Sigma_{k}V_{k}^*.\] Consequently, \[AR^\dagger = W_k\Sigma_k V_k^*(W_{k,I}\Sigma_{k}V_{k}^*)^\dagger. \]
To estimate the norm, let us first notice that the pseudoinverse in question turns out to satisfy

\[(W_{k,I}\Sigma_{k}V_{k}^*)^\dagger = (V_{k}^*)^\dagger\Sigma_{k}^{-1} W_{k,I}^\dagger.\]
This is true on account of the fact that $W_{k,I}$ has full column rank, $V_{k}^*$ has orthonormal rows, and $\Sigma_{k}$ is invertible by assumption.  Next, note that since $V_k^*$ has orthonormal rows, $(V_k^*)^\dagger = V_k$.  Putting these observations together, we have that

\begin{align}\label{EQARNorm}
\|AR^\dagger\| & = \|W_k\Sigma_k V_k^*(W_{k,I}\Sigma_{k}V_{k}^*)^\dagger\| \nonumber\\
& = \|\Sigma_kV_k^*V_{k}\Sigma_{k}^{-1} W_{k,I}^\dagger\| \nonumber\\
& = \|\Sigma_k\Sigma_{k}^{-1}W_{k,I}^\dagger\| \nonumber\\
& = \|W_{k,I}^\dagger\|.
\end{align} 
The second equality follows from the unitary invariance of the norm in question; to see this, write $W_k = WP$, where $W$ is the $m\times m$ orthonormal basis from the full SVD of $A$, and $P = \begin{bmatrix}I_{k\times k}\\ 0\end{bmatrix}$; subsequently, the norm in question will be the norm of $\begin{bmatrix}W_{k,I}^\dagger\\ 0\end{bmatrix}$, which is $\|W_{k,I}^\dagger\|$.  A word of caution: Equation \eqref{EQARNorm} is not true if $W_{k,I}$ is replaced by $W_{A,I}$ the row submatrix of the full left singular vector matrix of $A$.

By a directly analogous calculation, we have that
\[
\|C^\dagger A\|= \|(V_{k,J}^*)^\dagger\|,
\]
whereupon the conclusion follows from the fact that $(V_{k,J}^*)^\dagger = (V_{k,J}^\dagger)^*$, which has the same norm as $V_{k,J}^\dagger$.

\section{Table of Inequalities}\label{APP:Table}

\begin{table}[h!]
    \centering
    \begin{tabular}{|c|c|}  
    \hline
         \textbf{Approximation} & \textbf{Error Bound} ($w=\|W_{k,I}^\dagger\|, v=\|V_{k,J}^\dagger\|$)
         \\\hline\hline
         $ \widetilde{C}\widetilde{C}^\dagger\widetilde{A}\widetilde{R}^\dagger\widetilde{R}$ & $(w+v+3)\|E\|$ \\\hline 
         
         $\widetilde{C}\widetilde{U}^\dagger\widetilde{R}$ & $(w+v+3wv)\|E\|+\|\widetilde{U}^\dagger\|(w+v+wv+1)\|E\|^2$ \\ \hline 
         
         $\widetilde{C}[\widetilde{U}]_\tau^\dagger\widetilde{R}$ & 
         $(w+v+2wv)\|E\|+wv\|[\widetilde{U}]_\tau-U\|+\|[\widetilde{U}]_\tau^\dagger\|_2(w+v+wv+1)\|E\|^2$ \\\hline 
         
         $\widetilde{C}\widetilde{U}_k^\dagger\widetilde{R}$ & 
         $(w+v+4wv)\|E\|+\dfrac{\|U^\dagger\|_2}{1-2\mu\|U^\dagger\|_2\|E\|}(w+v+wv+1)\|E\|^2$ \\\hline
         
         $\widetilde{C}_k\widetilde{C}_k^\dagger\widetilde{A}\widetilde{R}_k^\dagger\widetilde{R}$ & 
         $(2w+2v+3)\|E\|$ \\\hline
    \end{tabular}
    \caption{Summary of the perturbation bounds attained in our analysis.}
    \label{TAB}
\end{table}

\end{document}